\theoremstyle{plain}
\newtheorem{theorem}{Theorem}[section]
\newtheorem{lemma}[theorem]{Lemma}
\newtheorem{corollary}[theorem]{Corollary}
\newtheorem{proposition}[theorem]{Proposition}
\theoremstyle{definition}
\newtheorem{definition}[theorem]{Definition}
\newtheorem{assumption}[theorem]{Assumption}
\theoremstyle{remark}
\begin{document}

\articletype{ARTICLE TEMPLATE}

\title{Newton-type Multilevel Optimization Method}

\author{
\name{Chin Pang Ho\textsuperscript{a}\thanks{a. Email: clint.ho@cityu.edu.hk} and Michal Ko\v{c}vara\textsuperscript{b,c}\thanks{b. Email: m.kocvara@bham.ac.uk} and Panos Parpas\textsuperscript{d}\thanks{d. Email: p.parpas@imperial.ac.uk}}
\affil{\textsuperscript{a}School of Data Science, City University of Hong Kong, Hong Kong; \textsuperscript{b}School of Mathematics, The University of Birmingham, United Kingdom; \textsuperscript{c}Institute of Information Theory and Automation, Academy of Sciences of the Czech Republic, Prague, Czech Republic; \textsuperscript{d}Department of Computing, Imperial College London, United Kingdom}
}

\maketitle

\begin{abstract}
Inspired by multigrid methods for linear systems of equations, multilevel optimization methods have been proposed to solve structured optimization problems. Multilevel methods make more assumptions regarding the structure of the optimization model, and as a result, they outperform single-level methods, especially for large-scale models. The impressive performance of multilevel optimization methods is an empirical observation, and no theoretical explanation has so far been proposed. In order to address this issue, we study the convergence properties of a multilevel method that is motivated by second-order methods. We take the first step toward establishing how the structure of an optimization problem is related to the convergence rate of multilevel algorithms.
\end{abstract}

\begin{keywords}
Newton's method; multilevel algorithms; multigrid methods; unconstrained optimization
\end{keywords}

\section{Introduction}\label{sec:NeMOintro}

Multigrid methods are a well-known and established method for solving differential equations \cite{Briggs2000,Hackbusch1985,Han2017,Strang2007,Trottenberg2001,Wesseling1992}. When solving a differential equation using numerical methods, an approximation of the solution is obtained on a mesh via discretization. The computational cost of solving the discretized problem, however, varies and it depends on the choice of the mesh size used. Therefore, by considering different mesh sizes, a hierarchy of discretized models can be defined. In general, a more accurate solution can be obtained when a smaller mesh size is chosen, which results in a discretized problem in higher dimensions. We shall follow the traditional terminology in the multigrid literature and call a \textit{fine model} to be the discretization in which its solution is sufficiently close to the solution of the original differential equation; otherwise we call it a \textit{coarse model} \cite{Briggs2000}. The main idea of multigrid methods is to make use of the geometric similarity between different discretizations. In particular, during the iterative process of computing the solution of the fine model, one replaces part of the computations with the information from coarse models. The advantages of using multigrid methods are twofold. Firstly, coarse models are in lower dimensions compared to the fine model, and so the computational cost is reduced. Secondly and interestingly, the corrections generated by the coarse model and fine model are in fact complementary. It has been shown that using the fine model is effective in reducing the high frequency components of the residual (error) but ineffective in reducing the low frequency component of the error. Those low frequency components of the error, however, will become high frequency errors in the coarse model. Thus, they could be eliminated effectively using coarse models \cite{Briggs2000,Strang2007}.

This idea of multigrid was extended to optimization algorithms. Nash \cite{Nash2000} proposed a multigrid framework for unconstrained infinite-dimensional convex optimization problems. Examples of such problems could be found in the area of optimal control. Following the idea of Nash, many multigrid optimization methods were further developed \cite{Nash2000,Nash2014,Lewis2013,Lewis2005,Kocvara2016,Wen2009,Gratton2008}. In particular, Wen and Goldfarb \cite{Wen2009} provided a line search-based multigrid optimization algorithm under the framework in \cite{Nash2000}, and further extended the framework to nonconvex problems. Gratton et al. \cite{Gratton2008} provided a sophisticated trust-region version of multigrid optimization algorithms, which they called it multiscale algorithm. In this paper, we will consistently use the name \textit{multilevel algorithms} for all these optimization algorithms, but we emphasize that the terms multilevel, multigrid, and multiscale were used interchangeably in different papers. On the other hand, we keep the name \textit{multigrid methods} for the conventional multigrid methods that solve linear or nonlinear equations that are discretizations arising from partial differential equations (PDEs).

It is worth mentioning that different multilevel algorithms were developed beyond infinite-dimensional problems, such as Markov decision processes \cite{Ho2014}, semidefinite programming \cite{MR3742696}, artificial neural networks \cite{cal2019approximation}, and composite optimization for both the convex \cite{MR3572365} and non-convex case \cite{MR3716579}. Also, Calandra et al. \cite{cal2019highorder} proposed a multilevel algorithm for adaptive cubic regularization method recently. The above algorithms all have the same aim: to speed up the computations by making use of the geometric similarity between different models in the hierarchy. 

Numerical results from the papers cited above show that multilevel algorithms can take  advantage of the geometric similarity between different discretizations. In particular, they outperform other state-of-the-art optimization methods, especially for large scale models. However, to the best of our knowledge, no theoretical result exists that rigorously explain these empirical observations. The contributions of this paper are:
\begin{itemize}

\item We provide a complete view of line search multilevel algorithm, and in particular, we connect the general framework of the multilevel algorithm with classical optimization algorithms, such as variable metric methods and block-coordinate type methods.

\item We analyze the Newton-type multilevel model. The key feature of the Newton-type multilevel model is that a coarse model is created from the first and second order information of the fine model. We will call this algorithm the \textbf{Ne}wton-type \textbf{M}ultilevel \textbf{O}ptimization (\textbf{NeMO}). A global convergence analysis of NeMO is provided.

\item We propose to use the composite rate for analysis of the local convergence of NeMO. As we will show later, neither linear convergence nor quadratic convergence is suitable when studying the local convergence of NeMO. 

\item We study the composite rate of NeMO in a case study of infinite dimensional optimization problems. We show that the linear component of the composite rate is inversely proportional to the smoothness of the residual, which agrees with the findings in conventional multigrid methods.
\end{itemize}

The rest of this paper is structured as follows: In Section \ref{sec:NeMOMultiModel} we provide background material for multilevel algorithms. In Section \ref{sec:NeMOconvAnalysis}, we study the convergence of NeMO. We first derive the global convergence rate of NeMO, and then show that NeMO exhibits composite convergence when the current incumbent is sufficiently close to the optimum. A composite convergence rate is defined as a linear combination of linear convergence and quadratic convergence, and we denote $r_1$ and $r_2$ as the coefficient of linear rate and quadratic rate, respectively. In Section \ref{sec:PDE}, we compute $r_1$ in problems arising from discretizations of one-dimensional PDE problems and show the relationship between $r_1$ and the structure of the problem. In Section \ref{sec:NeMONumExp} we illustrate the convergence of NeMO using several numerical examples.

\section{Multilevel Models}\label{sec:NeMOMultiModel}
In this section a broad view of the general multilevel framework will be provided. We start with a basic setting and the core idea of multilevel algorithms in \cite{Gratton2008,Lewis2005,Wen2009}. Then we provide the formulation and details of the core topic of this paper, namely Newton-type multilevel model.

\subsection{Problem Formulation}

In this paper we are interested in solving,
\begin{equation}\label{eq: fine model}
\min_{\mathbf{x}_h \in \mathbb{R}^N} f_h(\mathbf{x}_h), 
\end{equation}
where $\mathbf{x}_h \in \mathbb{R}^N$, and the function $f_h:\mathbb{R}^N \rightarrow \mathbb{R}$ is continuous, differentiable, and strongly convex. We clarify the use of the subscript $h$. Throughout this paper, the lower case $h$ represents an object or property that this is associated with the \textit{\textbf{fine}} model, i.e. the model we actually want to solve. To use multilevel methods, one needs to formulate a hierarchy of models with reduced dimensions called the \textit{\textbf{coarse}} models. We only consider two models in the hierarchy: fine and coarse. In the same manner of using subscript $h$, we assign the upper case $H$ to represent the association with coarse model. We assign $N$ and $n$ ($n \leq N$) to be the dimensions of fine model and coarse model, respectively. For instance, any vector that is within the space $\mathbb{R}^{N}$ is denoted with subscript $h$, and similarly, any vector with subscript $H$ is within the space $\mathbb{R}^{n}$.
\begin{assumption}\label{assm:NeMOLipAssm}
There exists constants $\mu_h$, $L_h$, and $M_h$ such that
\begin{equation}\label{eq:LhMuhDef}
\mu_h \mathbf{I} \preccurlyeq \nabla^2 f_h(\mathbf{x}) \preccurlyeq L_h \mathbf{I}, \quad \forall \mathbf{x}_h\in\mathbb{R}^n,
\end{equation}
and 
\begin{equation}\label{eq:MhDef}
\Vert \nabla^2 f_h (\mathbf{x}_h) - \nabla^2 f_h (\mathbf{y}_h) \Vert \leq M_h \Vert \mathbf{x}_h - \mathbf{y}_h \Vert , \quad \forall \mathbf{x}_h,\mathbf{y}_h\in\mathbb{R}^n .   
\end{equation}
Equation (\ref{eq:LhMuhDef}) implies
\begin{equation*}
\Vert \nabla f_h(\mathbf{x}_h) - \nabla f_h(\mathbf{y}_h) \Vert \leq L_h \Vert \mathbf{x}_h-\mathbf{y}_h \Vert, \quad \forall \mathbf{x}_h,\mathbf{y}_h\in\mathbb{R}^n .
\end{equation*} 
\end{assumption}
The above assumptions will be used throughout the paper.

Multilevel methods require mapping information across different dimensions. To this end, we define a matrix $\mathbf{P} \in \mathbb{R}^{N\times n}$ to be the prolongation operator which maps information from coarse to fine, and we define a matrix $\mathbf{R} \in \mathbb{R}^{n\times N}$ to be the restriction operator which maps information from fine to coarse. We make the following assumption on $\mathbf{P}$ and $\mathbf{R}$.
\begin{assumption}
\label{assp: P R}
The restriction operator $\mathbf{R}$ is the transpose of the prolongation operator $\mathbf{P}$ up to a constant $c$. That is, 
\[
\mathbf{P} = c \mathbf{R}^T, \qquad c>0.
\]
\end{assumption}
Without loss of generality, we take $c=1$ throughout this paper to simplify the use of notation for the analysis. We also assume any useful (non-zero) information in the coarse model will not become zero after prolongation and thus make the following assumption.
\begin{assumption}\label{assp: P}
The prolongation operator $\mathbf{P}$ has full column rank, and so  
\[
\text{rank}(\mathbf{P})=n.
\]
\end{assumption}
Notice that Assumption \ref{assp: P R} and \ref{assp: P} are standard assumptions for multilevel methods \cite{Briggs2000,Hackbusch2003,Wen2009}. Since $\mathbf{P}$ has full column rank, we define the pseudoinverse and its norm
\begin{equation}\label{eq:xiDef}
\mathbf{P}^{+} = (\mathbf{R}\mathbf{P})^{-1}\mathbf{R}, \quad \text{and} \quad \xi = \Vert\mathbf{P}^{+}\Vert.
\end{equation}
The coarse model is constructed in the following manner. Suppose in the $k^{\text{th}}$ iteration we have an incumbent
solution $\mathbf{x}_{h,k}$ and gradient $\nabla f_{h,k} \triangleq \nabla f_h(\mathbf{x}_{h,k})  $, then the corresponding coarse model is,
\begin{equation} \label{eq: coarse model}
\min_{\mathbf{x}_H \in \mathbf{R}^n} \phi_H(\mathbf{x}_H) \triangleq f_H(\mathbf{x}_H)+\langle \mathbf{v}_H,\mathbf{x}_H - \mathbf{x}_{H,0}\rangle,
\end{equation}
where,
\begin{eqnarray*}
\mathbf{v}_H &\triangleq & -\nabla f_{H,0}+ \mathbf{R} \nabla f_{h,k} ,
\end{eqnarray*}
$\mathbf{x}_{H,0} = \mathbf{R} \mathbf{x}_{h,k}$, and $f_H: \mathbb{R}^n \rightarrow \mathbb{R}$ {is a function to be specified later}. Similar to $\nabla f_{h,k}$, we denote $\nabla^2 f_{H,0} \triangleq \nabla^2 f_H(\mathbf{x}_{H,0})$ and $\nabla \phi_{H,0} \triangleq \nabla \phi_H (\mathbf{x}_{H,0})$ to simplify notation. We emphasize the construction of the coarse model (\ref{eq: coarse model}) is well known and it is not original in this paper. See for example \cite{Gratton2008,Lewis2005,Wen2009}. Note that when constructing the coarse model (\ref{eq: coarse model}), one needs to add an additional linear term {to} $f_H(\mathbf{x}_H)$. This linear term ensures the following is satisfied,
\begin{equation}\label{eq:NeMOhHmatch}
\nabla \phi_{H,0} = \mathbf{R} \nabla f_{h,k}.
\end{equation}
For infinite-dimensional optimization problems, one can define $f_h$ and $f_H$ using discretization with different mesh sizes. In general, $f_h$ is  a function that approximates the original problem sufficiently  well, and that can be achieved using a small mesh size. Based on geometric similarity between discretizations with different meshes, $f_h \approx f_H$ even though $n \leq N$. 

However, we want to emphasize $f_h \approx f_H$ is not a necessary requirement when using multilevel methods. In principle, $f_H(\mathbf{x}_H)$ can be any function. Newton-type multilevel model, as we will show later, is a quadratic model where $f_H$ is chosen to be a quadratic approximation of $f_h$ at some $\mathbf{x}_h$.

\subsection{The General Multilevel Algorithm}\label{sec:NeMOBasicAlg}
The main idea of multilevel algorithms is to use the coarse model to compute search directions. When a direction from the coarse model is used we call the iteration a \textit{\textbf{coarse correction step}}. When using coarse correction step, we compute the direction by solving the corresponding coarse model (\ref{eq: coarse model}) and perform the update,
\[
\mathbf{x}_{h,k+1} = \mathbf{x}_{h,k} + \alpha_{h,k} \hat{\mathbf{d}}_{h,k},
\]
with
\begin{equation}\label{eq: RecurStepDef}
\hat{\mathbf{d}}_{h,k} \triangleq \mathbf{P} ( \mathbf{x}_{H,\star} - \mathbf{x}_{H,0} ),
\end{equation}
where $\mathbf{x}_{H,\star}$ is the solution of the coarse model, and $\alpha_{h,k} \in \mathbb{R}^+$ is the stepsize. We clarify that the ``hat'' in $\hat{\mathbf{d}}_{h,k}$ is used to identify a coarse correction step.

We should emphasize that $\mathbf{x}_{H,\star}$ in (\ref{eq: RecurStepDef}) can be replaced by $\mathbf{x}_{H,r}$ for $r=1,2,\dots$, i.e., the incumbent solution of the coarse mode (\ref{eq: coarse model}) after the $r^{\text{th}}$ iterations {of some iterative method}. However, for the purpose of this paper and simplicity, we ignore this case and we let (\ref{eq: RecurStepDef}) be the {(exact)} coarse correction step.

It is known that the coarse correction step $\hat{\mathbf{d}}_{h,k}$ is a descent direction {for $f_h$} if $f_H$ is convex. The following lemma states this argument rigorously. Even though the proof is provided in \cite{Wen2009}, we provide it with our notation for the completeness of this paper.
\begin{lemma}[\cite{Wen2009}]\label{lm:dHatDescent}
If $f_H$ is a convex function, then the coarse correction step is a descent direction {for $f_h$ at $\mathbf{x}_{h,k}$}. In particular, in the $k^{\text{th}}$ iteration,
\[
\nabla f_{h,k}^T \hat{\mathbf{d}}_{h,k} \leq  \phi_{H,\star} - \phi_{H,0} \leq 0.
\]
\end{lemma}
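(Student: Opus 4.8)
The plan is to connect the gradient inner product $\nabla f_{h,k}^T \hat{\mathbf{d}}_{h,k}$ to quantities in the coarse model, exploiting the key matching condition (\ref{eq:NeMOhHmatch}) together with Assumption \ref{assp: P R}. First I would substitute the definition (\ref{eq: RecurStepDef}) of the coarse correction step to obtain
\[
\nabla f_{h,k}^T \hat{\mathbf{d}}_{h,k} = \nabla f_{h,k}^T \mathbf{P} (\mathbf{x}_{H,\star} - \mathbf{x}_{H,0}).
\]
Using $\mathbf{P} = \mathbf{R}^T$ (with $c=1$), the factor $\nabla f_{h,k}^T \mathbf{P}$ rewrites as $(\mathbf{R}\nabla f_{h,k})^T = \nabla\phi_{H,0}^T$ by the matching condition (\ref{eq:NeMOhHmatch}). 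Thus the fine-space inner product collapses to a coarse-space inner product,
\[
\nabla f_{h,k}^T \hat{\mathbf{d}}_{h,k} = \nabla\phi_{H,0}^T (\mathbf{x}_{H,\star} - \mathbf{x}_{H,0}).
\]
This is the crucial bridge: the descent property on the fine model is reduced to a statement purely about the coarse objective $\phi_H$ at its starting point $\mathbf{x}_{H,0}$ and its minimizer $\mathbf{x}_{H,\star}$.

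Next I would establish the first inequality $\nabla\phi_{H,0}^T(\mathbf{x}_{H,\star} - \mathbf{x}_{H,0}) \leq \phi_{H,\star} - \phi_{H,0}$. Since $f_H$ is convex and $\phi_H$ differs from $f_H$ only by a linear term, $\phi_H$ is also convex. The standard first-order convexity inequality, applied with the roles arranged so the gradient is evaluated at $\mathbf{x}_{H,0}$, gives $\phi_{H,\star} \geq \phi_{H,0} + \nabla\phi_{H,0}^T(\mathbf{x}_{H,\star} - \mathbf{x}_{H,0})$, which is exactly the desired bound after rearranging. For the second inequality $\phi_{H,\star} - \phi_{H,0} \leq 0$, I would simply note that $\mathbf{x}_{H,\star}$ is the global minimizer of $\phi_H$, so $\phi_{H,\star} = \phi_H(\mathbf{x}_{H,\star}) \leq \phi_H(\mathbf{x}_{H,0}) = \phi_{H,0}$.

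Chaining these two inequalities yields $\nabla f_{h,k}^T \hat{\mathbf{d}}_{h,k} \leq \phi_{H,\star} - \phi_{H,0} \leq 0$, which is the claim; the nonpositivity of the gradient inner product is precisely the statement that $\hat{\mathbf{d}}_{h,k}$ is a descent direction for $f_h$ at $\mathbf{x}_{h,k}$. I do not anticipate a serious obstacle here, since every step is either a definitional substitution or an invocation of convexity and optimality. The one point requiring mild care is the bookkeeping on the matching condition and the transpose identity $\mathbf{P} = \mathbf{R}^T$, making sure the restriction/prolongation operators are applied on the correct side so that the fine-space quantity genuinely reduces to the coarse-space inner product. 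A subtle remark worth noting is that the descent conclusion is not strict in this form; strict descent (and hence a usable direction when $\nabla f_{h,k} \neq \mathbf{0}$) would additionally require that the coarse correction is nonzero, i.e. that $\mathbf{R}\nabla f_{h,k} \neq \mathbf{0}$, but the lemma as stated only asserts the non-strict inequality, so the convexity and optimality arguments above suffice.
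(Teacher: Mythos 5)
Your proposal is correct and follows essentially the same route as the paper's own proof: substitute the definition of $\hat{\mathbf{d}}_{h,k}$, use $\mathbf{P}=\mathbf{R}^T$ and the matching condition (\ref{eq:NeMOhHmatch}) to collapse to the coarse-space inner product $\nabla\phi_{H,0}^T(\mathbf{x}_{H,\star}-\mathbf{x}_{H,0})$, then apply first-order convexity of $\phi_H$. Your only addition is making explicit the optimality argument $\phi_{H,\star}\leq\phi_{H,0}$ and the remark about non-strict descent, which the paper leaves implicit; both are fine.
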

\begin{proof}
\begin{eqnarray*}
\nabla f_{h,k}^T \hat{\mathbf{d}}_{h,k}  &=& \nabla f_{h,k}^T \mathbf{R}^T \left( \mathbf{x}_{H,\star} - \mathbf{x}_{H,0} \right) ,
\\
&=& \left(\mathbf{R}\nabla f_{h,k}\right)^T  \left( \mathbf{x}_{H,\star} - \mathbf{x}_{H,0} \right),
\\
&=& \nabla \phi_{H,0}^T  \left( \mathbf{x}_{H,\star} - \mathbf{x}_{H,0} \right),
\\
&\leq &  \phi_{H,\star} - \phi_{H,0}.
\end{eqnarray*}
as required, where the last inequality holds because $\phi_H$ is a convex function. 
\end{proof}
Even though Lemma \ref{lm:dHatDescent} states that $\hat{\mathbf{d}}_{h,k}$ is a descent direction, using coarse correction step solely is not sufficient to solve the fine model (\ref{eq: fine model}).
\begin{proposition}\label{prop:RecurStpNotEff}
{Assume that $f_H$ is convex.} Suppose $\nabla f_{h,k} \neq 0$ and $\nabla f_{h,k} \in \text{null}(\mathbf{R})$, then the coarse correction step
\[
\hat{\mathbf{d}}_{h,k} = 0.
\]
\end{proposition}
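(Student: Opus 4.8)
The plan is to show that, under the stated hypothesis, the starting point $\mathbf{x}_{H,0}$ of the coarse model is already one of its minimizers, so that the optimal displacement $\mathbf{x}_{H,\star}-\mathbf{x}_{H,0}$ vanishes and, by the definition (\ref{eq: RecurStepDef}) of the coarse correction step, $\hat{\mathbf{d}}_{h,k}=\mathbf{P}(\mathbf{x}_{H,\star}-\mathbf{x}_{H,0})=0$. Everything hinges on verifying the first-order optimality of $\mathbf{x}_{H,0}$ for $\phi_H$ and then invoking convexity.

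First I would translate the hypothesis $\nabla f_{h,k}\in\text{null}(\mathbf{R})$ into the single identity $\mathbf{R}\nabla f_{h,k}=0$. Next I would evaluate the gradient of the coarse objective at the starting point. Differentiating (\ref{eq: coarse model}) gives $\nabla\phi_H(\mathbf{x}_H)=\nabla f_H(\mathbf{x}_H)+\mathbf{v}_H$, so at $\mathbf{x}_{H,0}$ one obtains $\nabla\phi_{H,0}=\nabla f_{H,0}+\mathbf{v}_H=\mathbf{R}\nabla f_{h,k}$, which is exactly the matching identity (\ref{eq:NeMOhHmatch}). Combining this with the hypothesis yields $\nabla\phi_{H,0}=0$. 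Since $f_H$ is convex and the correction term $\langle\mathbf{v}_H,\mathbf{x}_H-\mathbf{x}_{H,0}\rangle$ is affine, $\phi_H$ is convex; a convex function whose gradient vanishes at $\mathbf{x}_{H,0}$ attains its global minimum there, so $\mathbf{x}_{H,0}$ solves the coarse model. Taking $\mathbf{x}_{H,\star}=\mathbf{x}_{H,0}$ then forces $\hat{\mathbf{d}}_{h,k}=0$.

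The one point requiring care is the identification $\mathbf{x}_{H,\star}=\mathbf{x}_{H,0}$. If $f_H$ were merely convex, the coarse model could admit several minimizers and an arbitrary one need not coincide with $\mathbf{x}_{H,0}$; the clean way to handle this is to recall the remark following (\ref{eq: RecurStepDef}), where $\mathbf{x}_{H,\star}$ is understood as the output of an iterative solver initialized at $\mathbf{x}_{H,0}$, which does not move once the gradient there is zero, or alternatively to invoke strong convexity of $f_H$ in the Newton-type setting so that the minimizer is unique. I expect this uniqueness/initialization issue to be the only genuine obstacle, since the gradient computation itself is immediate from (\ref{eq:NeMOhHmatch}). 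Finally, I note that the assumption $\nabla f_{h,k}\neq 0$ is not used in deriving $\hat{\mathbf{d}}_{h,k}=0$; its role is purely to make the statement meaningful, emphasizing that the iterate is \emph{not} stationary for the fine model yet the coarse correction produces no progress, which is precisely the deficiency that motivates combining coarse steps with fine-level steps.
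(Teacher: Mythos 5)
Your proposal is correct and follows essentially the same route as the paper: use the matching identity (\ref{eq:NeMOhHmatch}) to get $\nabla\phi_{H,0}=\mathbf{R}\nabla f_{h,k}=0$, conclude by convexity that $\mathbf{x}_{H,0}$ minimizes the coarse model, hence $\mathbf{x}_{H,\star}=\mathbf{x}_{H,0}$ and $\hat{\mathbf{d}}_{h,k}=\mathbf{P}(\mathbf{x}_{H,\star}-\mathbf{x}_{H,0})=0$. Your remark about possible non-uniqueness of the coarse minimizer under mere convexity is a fair point that the paper's one-line proof silently skips over, and your resolution (uniqueness in the strongly convex Newton-type setting, or the solver initialized at $\mathbf{x}_{H,0}$ staying put) is exactly the right way to close it.
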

\begin{proof}
From (\ref{eq:NeMOhHmatch}), $\mathbf{x}_{H,\star} = \mathbf{x}_{H,0}$ when $\mathbf{R}\nabla f_{h,k}=0$. Thus, $\hat{\mathbf{d}}_{h,k} = \mathbf{P} (\mathbf{x}_{H,\star} - \mathbf{x}_{H,0}) =0$.
\end{proof}
Recall that $\mathbf{R} \in \mathbb{R}^{n\times N}$, and so for $n <N$, a coarse correction step could be zero and make no progress even when the first order necessary condition $\nabla f_h = 0$ has not been satisfied.

\subsubsection{Fine Correction Step}

Two approaches can be used when coarse correction step is not progressing nor effective. The first approach is to compute directions using standard optimization methods. We call such step the \textit{\textbf{fine correction step}}. As opposed to coarse correction step $\hat{\mathbf{d}}_{h,k}$, we abandon the use of ``hat'' for all fine correction steps and denote them as $\mathbf{d}_{h,k}$'s. To be precise, we can compute $\mathbf{d}_{h,k}$ using the following, 
\begin{eqnarray}
 \mathbf{d}_{h,k}  &=& \arg \min_{\mathbf{d}}   \frac{1}{2}\langle \mathbf{d} , \mathbf{Q} \mathbf{d}  \rangle + \langle \nabla f_{h,k} , \mathbf{d}  \rangle , \nonumber \\ \label{eq:VetMatStpDef}
 &=& - \mathbf{Q}^{-1}  \nabla f_{h,k}.
 \end{eqnarray}
 where $\mathbf{Q} \in \mathbb{R}^{N \times N}$ is a positive definite matrix. When $\mathbf{Q} = \mathbf{I}$, $\mathbf{d}_{h,k}$ is the steepest descent search direction. When $\mathbf{Q} = \nabla^2 f_{h,k}$, $\mathbf{d}_{h,k}$ is the search direction by Newton's method. When $\mathbf{Q}$ is an approximation of the Hessian, then $\mathbf{d}_{h,k}$ is the quasi-Newton search direction.

We perform a fine correction step when a coarse correction step may not be effective. That is, {when one of the following conditions holds:}
\begin{equation}\label{eq:DirectReq}
\Vert\mathbf{R} \nabla f_{h,k} \Vert < \kappa \Vert\nabla f_{h,k}\Vert \quad \text{or} \quad \Vert\mathbf{R} \nabla f_{h,k}\Vert < \epsilon,
\end{equation}
where $\kappa \in \left(0,\min(1,\Vert\mathbf{R}\Vert)\right)$, and $\epsilon \in (0,1)$. The above criteria prevent the use of the coarse model when $\mathbf{x}_{H,0} \approx \mathbf{x}_{H,\star}$, i.e. the coarse correction step $\hat{\mathbf{d}}_{h,k}$ is close to $\mathbf{0}$. We point out that these criteria were also proposed in \cite{Wen2009}. We also make the following assumption on the fine correction step throughout this paper.
\begin{assumption}\label{assum: DirectStp}
There exists strictly positive constants $\nu_h, \zeta_h>0$ such that
\begin{equation*}
\Vert\mathbf{d}_{h,k}\Vert \leq \nu_h \Vert\nabla f_{h,k}\Vert, \quad \text{and} \quad -\nabla f_{h,k}^T \mathbf{d}_{h,k} \geq  \zeta_h \Vert\nabla f_{h,k} \Vert^2,
\end{equation*}
where $\mathbf{d}_{h,k}$ is a fine correction step. As a consequence, there exists a constant $\Lambda_h > 0$ such that
\[
f_{h,k} - f_{h,k+1} \geq  \Lambda_h  \Vert \nabla f_{h,k} \Vert^2,
\]
where $f_{h,k+1}$ is updated using a fine correction step.
\end{assumption}
As we will show later, Assumption \ref{assum: DirectStp} is not restrictive, and $\Lambda_h$ is known for well-known cases like gradient descent, Newton method, etc. Using the combination of fine and coarse correction steps is the standard approach in multilevel methods, especially for PDE-based optimization problems \cite{Gratton2008,Lewis2005,Wen2009}.

\subsubsection{Multiple $\mathbf{P}$'s and $\mathbf{R}$'s}

The second approach to overcome issue of ineffective coarse correction step is by creating multiple coarse models with different $\mathbf{P}$'s and $\mathbf{R}$'s. 
\begin{proposition}\label{prop:MultiPRnessCon}
Suppose $\mathbf{R}_1, \mathbf{R}_2, \dots, \mathbf{R}_p $ are all restriction operators that satisfy Assumption \ref{assp: P R} and \ref{assp: P}, where $\mathbf{R}_i \in \mathbb{R}^{n_i \times N}$ for $i=1,2,\dots ,p$. Denote $\mathcal{S}$ to be a set that contains the rows of $\mathbf{R}_i$'s in $\mathbb{R}^N$, for $i=1,2,\dots ,p$. If
\[
\text{span}(\mathcal{S}) = \mathbb{R}^N,
\]
then for $\nabla f_{h,k} \neq 0$ there exists at least one $\mathbf{R}_j \in \{ \mathbf{R}_i\}_{i=1}^p$ such that
\[
\hat{\mathbf{d}}_{h,k} \neq 0 \quad \text{and} \quad \nabla f_{h,k}^T \hat{\mathbf{d}}_{h,k} < 0,
\]
where $\hat{\mathbf{d}}_{h,k}$ is the coarse correction step computed using $\mathbf{R}_j$.
\end{proposition}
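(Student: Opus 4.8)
The plan is to combine the spanning hypothesis with the two earlier results on the coarse correction step. The conceptual heart is that the condition $\text{span}(\mathcal{S}) = \mathbb{R}^N$ rules out, for \emph{all} operators simultaneously, the degenerate situation of Proposition \ref{prop:RecurStpNotEff}, so at least one $\mathbf{R}_j$ must produce a nonzero restricted gradient; the rest of the argument then plumbs this through Lemma \ref{lm:dHatDescent}, sharpening its weak inequalities to strict ones.

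First I would establish that there is at least one index $j$ with $\mathbf{R}_j \nabla f_{h,k} \neq 0$. Suppose for contradiction that $\mathbf{R}_i \nabla f_{h,k} = 0$ for every $i=1,2,\dots,p$. Then every row of every $\mathbf{R}_i$ is orthogonal to $\nabla f_{h,k}$, so $\nabla f_{h,k}$ is orthogonal to each vector in $\mathcal{S}$ and hence to $\text{span}(\mathcal{S}) = \mathbb{R}^N$. This forces $\nabla f_{h,k} = 0$, contradicting the hypothesis. Fix such a $j$, let $\mathbf{P}_j = \mathbf{R}_j^T$ be the associated prolongation operator (Assumption \ref{assp: P R} with $c=1$), and let $\phi_H$, $\mathbf{x}_{H,0}$, $\mathbf{x}_{H,\star}$ denote the corresponding coarse-model quantities.

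Next I would show $\hat{\mathbf{d}}_{h,k} \neq 0$. By construction (equation (\ref{eq:NeMOhHmatch})) we have $\nabla \phi_{H,0} = \mathbf{R}_j \nabla f_{h,k} \neq 0$, so $\mathbf{x}_{H,0}$ is not a stationary point of $\phi_H$. Since $f_H$, and therefore $\phi_H$, is (strongly) convex, the minimizer $\mathbf{x}_{H,\star}$ is the unique point with vanishing gradient; as $\nabla \phi_{H,0} \neq 0$ this gives $\mathbf{x}_{H,\star} \neq \mathbf{x}_{H,0}$. Because $\mathbf{P}_j$ has full column rank (Assumption \ref{assp: P}) it is injective, and hence $\hat{\mathbf{d}}_{h,k} = \mathbf{P}_j(\mathbf{x}_{H,\star} - \mathbf{x}_{H,0}) \neq 0$.

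Finally, for the strict descent I would reuse the inequality chain from the proof of Lemma \ref{lm:dHatDescent}, namely $\nabla f_{h,k}^T \hat{\mathbf{d}}_{h,k} \leq \phi_{H,\star} - \phi_{H,0}$, and then upgrade $\phi_{H,\star} - \phi_{H,0} \leq 0$ to a strict inequality: since $\mathbf{x}_{H,0}$ is not a minimizer of $\phi_H$ while $\mathbf{x}_{H,\star}$ is the unique minimizer, we have $\phi_{H,\star} < \phi_{H,0}$, and therefore $\nabla f_{h,k}^T \hat{\mathbf{d}}_{h,k} < 0$. I expect the main obstacle to be precisely this sharpening from $\leq$ to $<$, which is where the (strong) convexity of $f_H$ and the uniqueness of the coarse minimizer are genuinely needed; the spanning argument in the first step, although it is the conceptual core of the statement, is a short linear-algebra observation.
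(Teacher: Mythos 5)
Your proposal is correct and follows essentially the same route as the paper's own proof: the spanning hypothesis forces $\mathbf{R}_j \nabla f_{h,k} \neq 0$ for some $j$, hence $\nabla \phi_{H,0} \neq 0$, so $\mathbf{x}_{H,\star} \neq \mathbf{x}_{H,0}$, and full column rank of $\mathbf{P}_j$ gives $\hat{\mathbf{d}}_{h,k} \neq 0$. In fact you are more complete than the paper, which stops at $\hat{\mathbf{d}}_{h,k}\neq 0$ and leaves the strict inequality $\nabla f_{h,k}^T \hat{\mathbf{d}}_{h,k} < 0$ implicit, whereas you supply it by upgrading the bound of Lemma \ref{lm:dHatDescent} via $\phi_{H,\star} < \phi_{H,0}$.
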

\begin{proof}
Since $\text{span}(\mathcal{S}) = \mathbb{R}^N$, then for $\nabla f_{h,k} \neq 0$, there exists one $\mathbf{R}_j$ such that $\mathbf{R}_j\nabla f_{h,k} \neq 0$. So the corresponding coarse model would have $\mathbf{x}_{H,\star} \neq \mathbf{x}_{H,0}$, and thus $\hat{\mathbf{d}}_{h,k_j} \neq 0$.
\end{proof}
Proposition \ref{prop:MultiPRnessCon} shows that if the rows of the restriction operators $\mathbf{R}_i$'s span $\mathbb{R}^N$, then at least one coarse correction step from these restriction operators would be nonzero and thus effective. In each iteration, one could use the similar idea as in (\ref{eq:DirectReq}) to rule out ineffective coarse models. However, this checking process could be expensive for large scale problems with large $p$ (number of restriction operators). To omit this checking process, one could choose the following alternatives.
\begin{itemize}
	\item[i.] \textbf{Cyclical approach}: choose $\mathbf{R}_1, \mathbf{R}_2, \dots, \mathbf{R}_p$ in order at each iteration, and choose $\mathbf{R}_1$ after $\mathbf{R}_p$.
	\item[ii.] \textbf{Probabilistic approach}: assign a probability mass function with $\{\mathbf{R}_i\}_{i=1}^p$ as a sample space, and choose the coarse model randomly based on the mass function. The mass function has to be strictly positive for each $\mathbf{R}_i$'s.
\end{itemize}
We point out that this idea of using multiple coarse models is related to domain decomposition methods, which solve (non-)linear equations arising from PDEs. Domain decomposition methods partition the problem domain into several sub-domains, and thus decompose the original problem into several smaller problems. We refer the reader to \cite{Chan1994} for more details about domain decomposition methods.

\subsection{Connection with Variable Metric Methods}\label{sec:VarMetMeth}
Using the above multilevel framework, in the rest of this section we will introduce different versions of multilevel algorithms: variable metric methods, block-coordinate descent, and stochastic variance reduced gradient. At the end of this section we will introduce the Newton-type multilevel model, which is an interesting case of the multilevel framework.

Recall that for variable metric methods, the direction $\mathbf{d}_{h,k}$ is computed by solving
\begin{eqnarray}
\mathbf{d}_{h,k}  &=& \arg \min_{\mathbf{d}}   \frac{1}{2}\langle \mathbf{d} , \mathbf{Q} \mathbf{d}  \rangle + \langle \nabla f_{h,k} , \mathbf{d}  \rangle ,
\nonumber \\ \label{eq:VetMatStpDef}
&=& - \mathbf{Q}^{-1}  \nabla f_{h,k}.
\end{eqnarray}
where $\mathbf{Q} \in \mathbb{R}^{N \times N}$ is a positive definite matrix. When $\mathbf{Q} = \mathbf{I}$, $\mathbf{d}_{h,k}$ is the steepest descent search direction. When $\mathbf{Q} = \nabla^2 f_{h,k}$, $\mathbf{d}_{h,k}$ is the search direction by Newton's method. When $\mathbf{Q}$ is an approximation of the Hessian, then $\mathbf{d}_{h,k}$ is the quasi-Newton search direction.

To show the connections between multilevel methods and variable metric methods, consider the following $f_H$.
\begin{equation}\label{eq:ReduceVarMetMet}
f_H(\mathbf{x}_H) = \frac{1}{2} \langle \mathbf{x}_H - \mathbf{x}_{H,0} , \mathbf{Q}_H  (\mathbf{x}_H - \mathbf{x}_{H,0}) \rangle ,
\end{equation}
where $\mathbf{Q}_H \in \mathbb{R}^{n \times n}$, and $\mathbf{x}_{H,0} = \mathbf{R} \mathbf{x}_{h,k}$ as defined in (\ref{eq: coarse model}). Applying the definition of the coarse model (\ref{eq: coarse model}), we obtain,
\begin{equation}\label{eq:ReduceVarMetMetCoarseMod}
\min_{\mathbf{x}_H \in \mathbf{R}^n} \phi_H(\mathbf{x}_H) = \frac{1}{2} \langle \mathbf{x}_H - \mathbf{x}_{H,0} , \mathbf{Q}_H  (\mathbf{x}_H - \mathbf{x}_{H,0}) \rangle +\langle \mathbf{R} \nabla f_{h,k} ,\mathbf{x}_H - \mathbf{x}_{H,0}\rangle.
\end{equation}
Thus from the definition in (\ref{eq: RecurStepDef}), the associated coarse correction step is,
\begin{equation}\label{eq:RvarMetdir}
\hat{\mathbf{d}}_{h,k}  = \mathbf{P} \left(  \arg \min_{\mathbf{d}_H \in \mathbf{R}^n} \underbrace{ \frac{1}{2} \langle \mathbf{d}_H , \mathbf{Q}_H  \mathbf{d}_H \rangle +\langle \mathbf{R} \nabla f_{h,k} ,\mathbf{d}_H \rangle }_{\mathbf{d}_H = \mathbf{x}_H-\mathbf{x}_{H,0}}\right) = - \mathbf{P}\mathbf{Q}_H^{-1}\mathbf{R} \nabla f_{h,k}.
\end{equation}
Therefore, with this specific $f_H$ in (\ref{eq:ReduceVarMetMet}), the resulting coarse model (\ref{eq:ReduceVarMetMetCoarseMod}) is analogous to variable metric methods. In a naive case where $n=N$ and $\mathbf{P} = \mathbf{R} = \mathbf{I}$, the corresponding coarse correction step (\ref{eq:RvarMetdir}) would be the same as steepest descent direction, Newton direction, and quasi-Newton direction for $\mathbf{Q}_H$ that is identity matrix, Hessian, and approximation of Hessian, respectively.
\subsection{Connection with Block-coordinate Descent}\label{sec:NeMABCDintro}
Interestingly, the coarse model (\ref{eq:ReduceVarMetMetCoarseMod}) is also related to block-coordinate type methods. Suppose we have $p$ coarse models with prolongation and restriction operators, $\{\mathbf{P}_i\}_{i=1}^p$ and $\{\mathbf{R}_i\}_{i=1}^p$, respectively. For each coarse model, we let (\ref{eq:ReduceVarMetMet}) be the corresponding $f_H$ with $\mathbf{Q}_H = \mathbf{I}$, and we further restrict our setting with the following properties. 
\begin{itemize}
	\item[1.] $\mathbf{P}_i \in \mathbb{R}^{N \times n_i}$, $\forall i=1,2,\dots,p$.
	\item[2.] $\mathbf{P}_i = \mathbf{R}_i^T$, $\forall i=1,2,\dots,p$.
	\item[3.] $[\mathbf{P}_1 \ \mathbf{P}_2 \dots \mathbf{P}_p] = \mathbf{I}$.
\end{itemize}
From (\ref{eq:RvarMetdir}), the above setting results in $\hat{\mathbf{d}}_{h,k_i} =  - \mathbf{P}_i  \mathbf{R}_i \nabla f_{h,k}$, where $\hat{\mathbf{d}}_{h,k_i}$ is the coarse correction step for the $i^{\text{th}}$ model. Notice that
\[  
(\mathbf{P}_i  \mathbf{R}_i \nabla f_{h,k})_j =
\begin{cases}
(\nabla f_{h,k})_j & \text{if} \ \  \displaystyle\sum_{q=1}^{i-1} n_q < j \leq \displaystyle\sum_{q=1}^{i} n_q ,\\
0                  & \text{otherwise } .
\end{cases}
\]
Therefore, $\hat{\mathbf{d}}_{h,k_i}$ is equivalent to a block-coordinate descent update \cite{Beck2013}. When $n_i=1$, for $i=1,2,\dots,p$, it becomes a coordinate descent method. When $1< n_i<N$, for $i=1,2,\dots,p$, it becomes a block-coordinate descent. When $\mathbf{P}_i$'s and $\mathbf{R}_i$'s are chosen using the cyclical approach, then it would be a cyclical (block)-coordinate descent. When $\mathbf{P}_i$'s and $\mathbf{R}_i$'s are chosen using the probabilistic approach, then it would be a randomized (block)-coordinate descent method.

\subsection{The Newton-type Multilevel Model}

We end this section with the core topic of this paper - the Newton-type multilevel model. The Newton-type multilevel coarse model is a special case of (\ref{eq:ReduceVarMetMetCoarseMod}) where,
\begin{equation}\label{eq:NeMOG-fH}
\mathbf{Q}_H = \nabla_H^2 f_{h,k} \triangleq \mathbf{R} \nabla^2 f_{h,k} \mathbf{P},
\end{equation}
and so the Newton-type multilevel (coarse) model is,
\begin{equation}\label{eq:G-model}
\min_{\mathbf{x}_H \in \mathbf{R}^n} \phi_H(\mathbf{x}_H) = \frac{1}{2} \langle \mathbf{x}_H - \mathbf{x}_{H,0} , \nabla_H^2 f_{h,k} (\mathbf{x}_H - \mathbf{x}_{H,0}) \rangle +\langle \mathbf{R} \nabla f_{h,k} ,\mathbf{x}_H - \mathbf{x}_{H,0}\rangle.
\end{equation}
According to (\ref{eq:RvarMetdir}), the corresponding coarse correction step is
\begin{equation}\label{eq:NeMOStpDef}
\hat{\mathbf{d}}_{h,k} = - \mathbf{P} [\mathbf{R} \nabla^2 f_{h,k} \mathbf{P}]^{-1} \mathbf{R} \nabla f_{h,k} = - \mathbf{P} [\nabla_H^2 f_{h,k}]^{-1} \mathbf{R} \nabla f_{h,k}.
\end{equation}
In the context of multilevel optimization, to the best of our knowledge, this coarse model was first considered in \cite{Gratton2008}. In \cite{Gratton2008} a trust-region type multilevel method is proposed to solve PDE-based optimization problems, and the Newton-type multilevel model is described as a ``radical strategy''. In a later paper from Gratton et al. \cite{Gratton2010}, a trust-region type multilevel method was tested numerically, and the Newton-type multilevel model showed promising numerical results.

It is worth mentioning that the above coarse correction step is equivalent to the solution of the system of linear equations,
\begin{equation}\label{eq:GalnewtonSys}
\mathbf{R}\nabla^2 f_{h,k} \mathbf{P} \mathbf{d}_{H} = -\mathbf{R} \nabla f_{h,k}.
\end{equation}
which is the general case of the Newton's method in which $\mathbf{P} = \mathbf{R} = \mathbf{I}$. Using Assumption \ref{assp: P}, we can show that $\nabla_H^2 f_{h,k}$ is positive definite, and so equation (\ref{eq:GalnewtonSys}) has a unique solution.
\begin{proposition}\label{prop:coarseHbasic}
$\mathbf{R} \nabla^2 f_h(\mathbf{x}_h) \mathbf{P}$ is positive definite, and in particular,
\[
\mu_h\xi^{-2} \mathbf{I} \preceq  \mathbf{R} \nabla^2 f_h(\mathbf{x}_h) \mathbf{P}  \preceq L_h \omega^2 \mathbf{I}
\]
where $\omega = \max \{ \Vert \mathbf{P} \Vert , \Vert \mathbf{R} \Vert \}$ and $\xi = \Vert \mathbf{P}^+ \Vert$.
\end{proposition}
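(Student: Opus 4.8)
The plan is to reduce the claim to a quadratic-form estimate using the substitution $\mathbf{z} = \mathbf{P}\mathbf{y}_H$, and then to sandwich $\mathbf{z}^T \nabla^2 f_h(\mathbf{x}_h) \mathbf{z}$ between its Rayleigh-quotient bounds. First I would invoke Assumption \ref{assp: P R} (with $c=1$) to write $\mathbf{R} = \mathbf{P}^T$, so that
\[
\mathbf{R} \nabla^2 f_h(\mathbf{x}_h) \mathbf{P} = \mathbf{P}^T \nabla^2 f_h(\mathbf{x}_h) \mathbf{P},
\]
which is manifestly symmetric. Positive definiteness and both spectral bounds then amount to controlling $\mathbf{y}_H^T \mathbf{P}^T \nabla^2 f_h(\mathbf{x}_h) \mathbf{P}\, \mathbf{y}_H = \mathbf{z}^T \nabla^2 f_h(\mathbf{x}_h)\, \mathbf{z}$ for arbitrary $\mathbf{y}_H \in \mathbb{R}^n$.

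For the upper bound I would apply the right-hand inequality of (\ref{eq:LhMuhDef}), giving $\mathbf{z}^T \nabla^2 f_h(\mathbf{x}_h)\, \mathbf{z} \leq L_h \Vert \mathbf{z}\Vert^2 = L_h \Vert \mathbf{P}\mathbf{y}_H\Vert^2$, and then bound $\Vert \mathbf{P}\mathbf{y}_H\Vert \leq \Vert \mathbf{P}\Vert\,\Vert \mathbf{y}_H\Vert \leq \omega \Vert \mathbf{y}_H\Vert$ by the definition of $\omega$. This yields $\mathbf{P}^T \nabla^2 f_h(\mathbf{x}_h)\mathbf{P} \preceq L_h\omega^2 \mathbf{I}$ directly. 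The upper bound is routine; the only mildly delicate point is the lower bound, where naively applying the left inequality of (\ref{eq:LhMuhDef}) gives $\mathbf{z}^T \nabla^2 f_h(\mathbf{x}_h)\,\mathbf{z} \geq \mu_h \Vert \mathbf{P}\mathbf{y}_H\Vert^2$, but one still needs a lower bound on $\Vert \mathbf{P}\mathbf{y}_H\Vert$ in terms of $\Vert \mathbf{y}_H\Vert$. Since $\mathbf{P}$ maps into a higher-dimensional space, this is the step that genuinely uses Assumption \ref{assp: P}.

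The key observation for the lower bound is that full column rank of $\mathbf{P}$ (Assumption \ref{assp: P}) makes $\mathbf{P}^+$ in (\ref{eq:xiDef}) a genuine left inverse, i.e. $\mathbf{P}^+ \mathbf{P} = \mathbf{I}$, which I would verify from $\mathbf{P}^+ = (\mathbf{R}\mathbf{P})^{-1}\mathbf{R}$ and the invertibility of $\mathbf{R}\mathbf{P}$. Consequently
\[
\Vert \mathbf{y}_H\Vert = \Vert \mathbf{P}^+ \mathbf{P}\mathbf{y}_H\Vert \leq \Vert \mathbf{P}^+\Vert\,\Vert \mathbf{P}\mathbf{y}_H\Vert = \xi \Vert \mathbf{P}\mathbf{y}_H\Vert,
\]
so that $\Vert \mathbf{P}\mathbf{y}_H\Vert^2 \geq \xi^{-2}\Vert \mathbf{y}_H\Vert^2$. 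Combining this with the previous estimate gives $\mathbf{y}_H^T \mathbf{P}^T \nabla^2 f_h(\mathbf{x}_h)\mathbf{P}\,\mathbf{y}_H \geq \mu_h \xi^{-2}\Vert \mathbf{y}_H\Vert^2$, which is exactly $\mu_h\xi^{-2}\mathbf{I} \preceq \mathbf{R}\nabla^2 f_h(\mathbf{x}_h)\mathbf{P}$. Since $\mu_h > 0$ and $\xi^{-2} > 0$, this lower bound simultaneously establishes positive definiteness, completing the proof. I anticipate no serious obstacle beyond recognizing that the pseudoinverse identity is what converts the rank condition into the quantitative lower bound on $\Vert \mathbf{P}\mathbf{y}_H\Vert$.
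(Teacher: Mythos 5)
Your proposal is correct and follows essentially the same route as the paper's proof: both reduce the claim to the quadratic form $(\mathbf{P}\mathbf{y})^T \nabla^2 f_h(\mathbf{x}_h)(\mathbf{P}\mathbf{y})$, apply the eigenvalue bounds of Assumption \ref{assm:NeMOLipAssm}, bound $\Vert \mathbf{P}\mathbf{y}\Vert \leq \omega\Vert\mathbf{y}\Vert$ above, and use the left-inverse property of $\mathbf{P}^{+}$ to get $\Vert\mathbf{P}\mathbf{y}\Vert \geq \xi^{-1}\Vert\mathbf{y}\Vert$ below. The only difference is that you spell out two steps the paper leaves implicit, namely the symmetry of $\mathbf{P}^T\nabla^2 f_h(\mathbf{x}_h)\mathbf{P}$ and the verification that $\mathbf{P}^{+}\mathbf{P} = \mathbf{I}$.
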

\begin{proof}
\begin{eqnarray*}
\mathbf{x}^T \left(\mathbf{R}\nabla^2 f_h(\mathbf{x}_h)\mathbf{P} \right)\mathbf{x} = (\mathbf{Px})^T \nabla^2 f_h(\mathbf{x}_h) (\mathbf{Px}) \leq L_h \Vert \mathbf{Px}\Vert^2 \leq L_h \omega^2 \Vert \mathbf{x} \Vert^2.
\end{eqnarray*}
Also,
\begin{eqnarray*}
\mathbf{x}^T \left(\mathbf{R}\nabla^2 f_h(\mathbf{x}_h)\mathbf{P} \right)\mathbf{x} = (\mathbf{Px})^T \nabla^2 f_h(\mathbf{x}_h) (\mathbf{Px}) \geq \mu_h \Vert \mathbf{Px} \Vert^2 \geq \frac{\mu_h}{\Vert \mathbf{P}^{+}\Vert^2} \Vert\mathbf{x}\Vert^2 =  \frac{\mu_h}{\xi^2} \Vert\mathbf{x}\Vert^2.
\end{eqnarray*}
So we obtain the desired result.
\end{proof}

\section{Convergence of NeMO}\label{sec:NeMOconvAnalysis}

\begin{algorithm}[t]
\caption{NeMO}
\begin{algorithmic}\label{alg: MLGM}
\STATE \textbf{Input: }$\mathbf{P} \in \mathbb{R}^{N \times n}$ and $\mathbf{R} \in \mathbb{R}^{N\times n}$ which satisfy Assumption \ref{assp: P R} and \ref{assp: P}, $\kappa \in \left(0,\min(1,\Vert\mathbf{R}\Vert)\right) $, $\epsilon$, $\rho_1 \in (0,0.5)$, $\beta_{ls} \in (0,1)$.
\STATE \textbf{Initialization:} $\mathbf{x}_{h,0} \in \mathbb{R}^N$
\FOR{$k=0,1,2,\dots$}
\STATE Compute the direction
\[
\mathbf{d} =
\begin{cases}
 \hat{\mathbf{d}}_{h,k}  \text{  in (\ref{eq:NeMOStpDef})} & \text{if} \ \ \Vert \mathbf{R} \nabla f_{h,k}\Vert > \kappa \Vert \nabla f_{h,k} \Vert \text{ and } \Vert \mathbf{R} \nabla f_{h,k} \Vert > \epsilon,
\\
\mathbf{d}_{h,k}  \text{ in  (\ref{eq:VetMatStpDef})}                 & \text{otherwise} .
\end{cases}
\]
\STATE Find the smallest $q \in \mathbb{N}$ such that for stepsize $\alpha_{h,k} = \beta_{ls}^q$, \[ f_h ( \mathbf{x}_{h,k} + \alpha_{h,k} \mathbf{d} ) \leq f_{h,k} + \rho_1 \alpha_{h,k} \nabla^T f_{h,k} \mathbf{d} .\]
\STATE Update \[\mathbf{x}_{h,k+1} \triangleq \mathbf{x}_{h,k} + \alpha_{h,k} \mathbf{d}.\]
\ENDFOR
\end{algorithmic}
\end{algorithm}

In this section we analyze NeMO (Algorithm \ref{alg: MLGM}). The fine correction steps in Algorithm \ref{alg: MLGM} are deployed by a variable metric method, and an Armijo rule is used as stepsize strategy for both fine and coarse correction steps. We will first show that Algorithm \ref{alg: MLGM} achieves a sublinear rate of convergence. We then analyze the maximum number of coarse correction steps that would be taken by Algorithm \ref{alg: MLGM}, and the condition that when the coarse correction steps yield quadratic reduction in the gradients in the subspace. At the end of this section, we will provide the composite convergence rate for the coarse correction steps.

To provide convergence properties when the coarse correction step is used, the following quantity will be used 
\begin{equation*}
\chi_{H,k} \triangleq [(\mathbf{R}\nabla f_{h,k})^T[\nabla_H^2 f_{h,k}]^{-1}\mathbf{R}\nabla f_{h,k}]^{1/2}.
\end{equation*}
Notice that $\chi_{H,k}$ is analogous to the Newton decrement, which is used to study the convergence of the Newton method \cite{Boyd:2004:CO:993483}. In particular, $\chi_{H,k}$ has the following properties.
\begin{itemize}
\item[1.] $\nabla f_{h,k}^T \hat{\mathbf{d}}_{h,k} =  -\chi_{H,k}^2$.
\item[2.] $\hat{\mathbf{d}}_{h,k}^T \nabla^2 f_{h,k} \hat{\mathbf{d}}_{h,k} =  \chi_{H,k}^2$.
\end{itemize}
We omit the proofs of the above properies since these can be done by using direct computation and the definition of $\chi_{H,k}$.

\subsection{The Sublinear Rate}
We will show that Algorithm \ref{alg: MLGM} will achieve a sublinear rate of convergence. We will deploy the techniques from \cite{Beck2013} and \cite{Boyd:2004:CO:993483}. Starting with the following lemma, we state reduction in function value using coarse correction steps. We would like to clarify that even though NeMO is considered as a special case in \cite{Wen2009}, we take advantage of this simplification and specification to provide analysis with results that are easier to interpret. In particular, the analysis of stepsizes $\alpha_{h,k}$'s in \cite{Wen2009} relies on the maximum number of iterations taken. This result is unfavorable and unnecessary for the setting we consider.

\begin{lemma} \label{lm: RUstepFimprove}
The coarse correction step $\hat{\mathbf{d}}_{h,k}$ in Algorithm \ref{alg: MLGM} will lead to reduction in function value
\[
f_{h,k} - f_h( \mathbf{x}_{h,k} +\alpha_{h,k} \hat{\mathbf{d}}_{h,k}) \geq  \frac{\rho_1 \kappa^2 \beta_{ls} \mu_h}{ \omega^2 L_h^2}   \Vert \nabla f_{h,k} \Vert^2,
\]
where $\rho_1$, $\kappa$, and $\beta_{ls}$ are user-defined parameters in Algorithm \ref{alg: MLGM}.  $L_h$ and $\mu_h$ are defined in Assumption~\ref{assm:NeMOLipAssm}. $\omega$ is defined in Proposition \ref{prop:coarseHbasic}. 
\end{lemma}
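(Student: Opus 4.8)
The plan is to run the standard backtracking line-search analysis, but with the Newton decrement $\chi_{H,k}$ playing the role that the gradient norm plays in ordinary gradient descent. First I would invoke the Armijo acceptance criterion that $\alpha_{h,k}$ satisfies by construction, namely $f_h(\mathbf{x}_{h,k}+\alpha_{h,k}\hat{\mathbf{d}}_{h,k}) \leq f_{h,k} + \rho_1\alpha_{h,k}\nabla f_{h,k}^T\hat{\mathbf{d}}_{h,k}$. Combining this with the first listed property of $\chi_{H,k}$, namely $\nabla f_{h,k}^T\hat{\mathbf{d}}_{h,k} = -\chi_{H,k}^2$, immediately gives $f_{h,k}-f_h(\mathbf{x}_{h,k}+\alpha_{h,k}\hat{\mathbf{d}}_{h,k}) \geq \rho_1\alpha_{h,k}\chi_{H,k}^2$. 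The remaining work is to lower-bound the two factors $\chi_{H,k}^2$ and $\alpha_{h,k}$ separately.

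For $\chi_{H,k}^2$ I would use the upper bound $\nabla_H^2 f_{h,k}\preceq L_h\omega^2\mathbf{I}$ from Proposition~\ref{prop:coarseHbasic}, which inverts to $[\nabla_H^2 f_{h,k}]^{-1}\succeq (L_h\omega^2)^{-1}\mathbf{I}$ and hence gives $\chi_{H,k}^2 \geq (L_h\omega^2)^{-1}\Vert\mathbf{R}\nabla f_{h,k}\Vert^2$. Since the coarse correction step is taken only when $\Vert\mathbf{R}\nabla f_{h,k}\Vert > \kappa\Vert\nabla f_{h,k}\Vert$, this yields $\chi_{H,k}^2 \geq \kappa^2(L_h\omega^2)^{-1}\Vert\nabla f_{h,k}\Vert^2$. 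For the stepsize I would apply the $L_h$-smoothness descent inequality $f_h(\mathbf{x}_{h,k}+\alpha\hat{\mathbf{d}}_{h,k}) \leq f_{h,k}+\alpha\nabla f_{h,k}^T\hat{\mathbf{d}}_{h,k}+\tfrac{L_h\alpha^2}{2}\Vert\hat{\mathbf{d}}_{h,k}\Vert^2$ and compare it against the Armijo target, concluding that the test is guaranteed to pass whenever $\alpha \leq \alpha^{\star} := 2(1-\rho_1)\mu_h/L_h$. The cancellation that makes this threshold clean is the bound $\Vert\hat{\mathbf{d}}_{h,k}\Vert^2 \leq \chi_{H,k}^2/\mu_h$, which comes from the second listed property of $\chi_{H,k}$, namely $\hat{\mathbf{d}}_{h,k}^T\nabla^2 f_{h,k}\hat{\mathbf{d}}_{h,k} = \chi_{H,k}^2$, together with $\nabla^2 f_{h,k}\succeq \mu_h\mathbf{I}$ from Assumption~\ref{assm:NeMOLipAssm}; substituting it cancels the $\chi_{H,k}^2$ factors so the sufficient-decrease threshold depends only on $\mu_h$, $L_h$, and $\rho_1$. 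Backtracking by the factor $\beta_{ls}$ then forces $\alpha_{h,k} \geq \min\{1,\beta_{ls}\alpha^{\star}\}$, and since $\mu_h\leq L_h$ and $\beta_{ls}<1$ give $\beta_{ls}\mu_h/L_h<1$ while $\rho_1<1/2$ gives $\alpha^{\star}>\mu_h/L_h$, this is at least $\beta_{ls}\mu_h/L_h$.

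Multiplying the three estimates together gives the reduction lower bound $\rho_1\,(\beta_{ls}\mu_h/L_h)\,(\kappa^2/(L_h\omega^2))\,\Vert\nabla f_{h,k}\Vert^2$, which is precisely the stated constant $\rho_1\kappa^2\beta_{ls}\mu_h/(\omega^2 L_h^2)$. I expect the stepsize lower bound to be the main obstacle: one must handle both the case where the unit step $\alpha=1$ is accepted immediately and the case where genuine backtracking occurs, and one must carry out the $\Vert\hat{\mathbf{d}}_{h,k}\Vert \to \chi_{H,k}$ substitution carefully so that the threshold $\alpha^{\star}$ is independent of the current iterate. The role of the hypothesis $\rho_1<1/2$ is to let me discard the harmless factor $2(1-\rho_1)>1$ and land exactly on the claimed constant rather than a strictly larger one.
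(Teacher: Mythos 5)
Your proposal is correct and follows essentially the same route as the paper's proof: both combine the Armijo acceptance inequality with the identity $\nabla f_{h,k}^T\hat{\mathbf{d}}_{h,k}=-\chi_{H,k}^2$, lower-bound the accepted stepsize by $\beta_{ls}\mu_h/L_h$ via the $L_h$-smoothness bound and $\mu_h\Vert\hat{\mathbf{d}}_{h,k}\Vert^2\leq\chi_{H,k}^2$, and finish with $\chi_{H,k}^2\geq\Vert\mathbf{R}\nabla f_{h,k}\Vert^2/(\omega^2L_h)$ together with the trigger condition $\Vert\mathbf{R}\nabla f_{h,k}\Vert>\kappa\Vert\nabla f_{h,k}\Vert$. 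The only cosmetic difference is that you exhibit the full acceptance interval $\alpha\leq 2(1-\rho_1)\mu_h/L_h$ and then discard the factor $2(1-\rho_1)>1$, whereas the paper verifies the Armijo condition at the single point $\hat{\alpha}=\mu_h/L_h$ using $\rho_1<1/2$.
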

\begin{proof}
By convexity,
\begin{eqnarray*}
f_h(\mathbf{x}_{h,k} +\alpha \hat{\mathbf{d}}_{h,k}) &\leq & f_{h,k} + \alpha \langle \nabla f_{h,k}, \hat{\mathbf{d}}_{h,k} \rangle + \frac{L_h}{2} \alpha^2 \Vert\hat{\mathbf{d}}_{h,k}\Vert^2,
\\
&\leq & f_{h,k} - \alpha \chi_{H,k}^2 + \frac{L_h}{2\mu_h} \alpha^2 \chi_{H,k}^2,
\end{eqnarray*}
since
\[
\mu_h\Vert\hat{\mathbf{d}}_{h,k}\Vert^2 \leq \hat{\mathbf{d}}_{h,k}^T\nabla^2 f_h(x_k) \hat{\mathbf{d}}_{h,k} = \chi_{H,k}^2.
\]
Notice that for $\hat{\alpha} = \mu_h/L_h$, we have
\[
- \hat{\alpha} + \frac{L_h}{2\mu_h} \hat{\alpha}^2 = - \hat{\alpha}  + \frac{L_h}{2\mu_h} \frac{\mu_h}{L_h}\hat{\alpha} = -\frac{1}{2}\hat{\alpha},
\]
and
\begin{eqnarray*}
f_h(\mathbf{x}_{h,k} + \hat{\alpha} \hat{\mathbf{d}}_{h,k}) &\leq & f_{h,k} - \frac{\hat{\alpha}}{2}  \chi_{H,k}^2,
\\
&\leq & f_{h,k} + \frac{\hat{\alpha}}{2}  \nabla f_{h,k}^T \hat{\mathbf{d}}_{h,k},
\\
& < & f_{h,k} + \rho_1 \hat{\alpha}  \nabla f_{h,k}^T \hat{\mathbf{d}}_{h,k},
\end{eqnarray*}
which satisfies the Armijo condition. Therefore, line search will return stepsize $\alpha_{h,k} \geq \hat{\alpha} = (\beta_{ls}\mu_h)/L_h$. Using the fact that
\[
\frac{1}{\omega^2 L_h}\Vert \mathbf{R} \nabla f_{h}(x_k)\Vert^2 \leq (\mathbf{R}\nabla f_{h,k})^T[\nabla_H^2 f_{h,k}]^{-1}\mathbf{R}\nabla f_{h,k}= \chi_{H,k}^2,
\]
we obtain
\begin{eqnarray*}
f_h(\mathbf{x}_{h,k} + \alpha_{h,k} \hat{\mathbf{d}}_{h,k}) - f_{h,k} & \leq &  \rho_1 \alpha_{h,k}  \nabla f_{h,k}^T \hat{\mathbf{d}}_{h,k},
\\
& \leq & - \rho_1 \hat{\alpha}  \chi_{H,k}^2,
\\
& \leq & - \rho_1 \frac{\beta_{ls}\mu_h}{\omega^2 L_h^2}  \Vert \mathbf{R} \nabla f_{h,k} \Vert^2,
\\
& \leq & - \frac{\rho_1 \kappa^2 \beta_{ls} \mu_h}{\omega^2 L_h^2}  \Vert \nabla f_{h,k} \Vert^2,
\end{eqnarray*}
as required.
\end{proof}
Using the result in Lemma \ref{lm: RUstepFimprove}, we derive the guaranteed reduction in function value in the following two lemmas.
\begin{lemma} \label{lm: AllstepFimprove}
Let $\Lambda \triangleq \min\left\{ \Lambda_h, \dfrac{\rho_1 \kappa^2 \beta_{ls} \mu_h}{\omega^2 L_h^2} \right\}$, then the step $\mathbf{d}$ in Algorithm \ref{alg: MLGM} will lead to
\[
f_{h,k} - f_{h,k+1} \geq  \Lambda  \Vert\nabla f_{h,k}\Vert^2,
\]
where $\rho_1$, $\kappa$, and $\beta_{ls}$ are user-defined parameters in Algorithm \ref{alg: MLGM}. $L_h$ and $\mu_h$ are defined in Assumption~\ref{assm:NeMOLipAssm}. $\Lambda_h$ is defined in Assumption \ref{assum: DirectStp}. $\omega$ is defined in Proposition \ref{prop:coarseHbasic}. 
\end{lemma}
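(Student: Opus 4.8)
The plan is to prove the bound by a case analysis on which of the two step types Algorithm \ref{alg: MLGM} actually selects in iteration $k$, since the substantive work has already been done in Lemma \ref{lm: RUstepFimprove} and Assumption \ref{assum: DirectStp}. The selection rule in the algorithm (equivalently, the criterion in (\ref{eq:DirectReq})) is a clean dichotomy: either $\mathbf{d} = \hat{\mathbf{d}}_{h,k}$, the coarse correction step from (\ref{eq:NeMOStpDef}), or $\mathbf{d} = \mathbf{d}_{h,k}$, the fine correction step from (\ref{eq:VetMatStpDef}). I would split the argument exactly along this branching so that each case matches one of the two pre-established reduction guarantees.

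In the first case, where the coarse correction step is taken because $\Vert \mathbf{R} \nabla f_{h,k} \Vert > \kappa \Vert \nabla f_{h,k} \Vert$ and $\Vert \mathbf{R} \nabla f_{h,k} \Vert > \epsilon$, Lemma \ref{lm: RUstepFimprove} applies directly and delivers the reduction $f_{h,k} - f_{h,k+1} \geq \dfrac{\rho_1 \kappa^2 \beta_{ls} \mu_h}{\omega^2 L_h^2} \Vert \nabla f_{h,k} \Vert^2$. In the complementary case, where the fine correction step is taken, Assumption \ref{assum: DirectStp} already gives $f_{h,k} - f_{h,k+1} \geq \Lambda_h \Vert \nabla f_{h,k} \Vert^2$ by construction. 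The final step is simply to invoke the definition $\Lambda = \min\{\Lambda_h,\ \rho_1 \kappa^2 \beta_{ls} \mu_h / (\omega^2 L_h^2)\}$, which is no larger than either coefficient, so that both branches are subsumed under the single uniform lower bound $\Lambda \Vert \nabla f_{h,k} \Vert^2$.

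Candidly, there is no genuine analytical obstacle here: the lemma is a bookkeeping result that consolidates the two regimes into one statement, which is precisely what later global-convergence arguments will want to cite without re-examining the branch. The only point requiring care is aligning the case split with the algorithm's selection rule so that each branch invokes the correct prior result, and ensuring the reduction coefficient is bounded below by the \emph{minimum} of the two constants rather than by one of them alone. I would therefore keep the proof to two short cases followed by the $\min$ observation, rather than re-deriving either bound.
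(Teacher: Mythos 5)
Your proposal is correct and matches the paper's own argument exactly: the paper's proof is a one-line remark that the lemma follows directly from Lemma \ref{lm: RUstepFimprove} (coarse step) and Assumption \ref{assum: DirectStp} (fine step), which is precisely the two-case split plus the $\min$ observation you spell out. Your version simply makes the implicit bookkeeping explicit; no gap and no divergence in approach.
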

\begin{proof}
This is a direct result from Lemma \ref{lm: RUstepFimprove} and Assumption \ref{assum: DirectStp}.
\end{proof}

{Let $\mathbf{x}_{h,\star}$ denote the exact solution of (\ref{eq: fine model}) and let $f_{h,\star}\triangleq f(\mathbf{x}_{h,\star})$.}
\begin{lemma}\label{lm:NeMOFunValDiff} 
Suppose 
\[
\mathcal{R}(\mathbf{x}_{h,0}) \triangleq \max_{\mathbf{x}_h \in \mathbb{R}^N} \{ \Vert \mathbf{x}_h - \mathbf{x}_{h,\star} \Vert : f_h (\mathbf{x}_h) \leq f_h (\mathbf{x}_{h,0}) \},
\]
the step in Algorithm \ref{alg: MLGM} will guarantee
\[
f_{h,k} - f_{h,k+1} \geq \frac{\Lambda}{\mathcal{R}^2(\mathbf{x}_{h,0})} \left(f_{h,k} - f_{h,\star} \right)^2,
\]
where $\Lambda$ is defined in Lemma \ref{lm: AllstepFimprove}.
\end{lemma}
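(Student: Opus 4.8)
The plan is to combine the per-step decrease from Lemma~\ref{lm: AllstepFimprove} with a lower bound on $\Vert \nabla f_{h,k} \Vert$ in terms of the optimality gap $f_{h,k} - f_{h,\star}$. The only extra ingredient needed is that every iterate remains in the initial sublevel set, so that its distance to the minimizer is controlled by $\mathcal{R}(\mathbf{x}_{h,0})$.

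First I would record that Algorithm~\ref{alg: MLGM} is a monotone descent method: both step types are accepted through the Armijo condition applied to a descent direction (the coarse step is a descent direction by Lemma~\ref{lm:dHatDescent}, and the fine step by Assumption~\ref{assum: DirectStp}), so $f_{h,j+1} \leq f_{h,j}$ for all $j$ and hence $f_{h,k} \leq f_{h,0}$. Consequently $\mathbf{x}_{h,k}$ lies in the sublevel set $\{ \mathbf{x}_h : f_h(\mathbf{x}_h) \leq f_h(\mathbf{x}_{h,0}) \}$, and by the very definition of $\mathcal{R}(\mathbf{x}_{h,0})$ we obtain $\Vert \mathbf{x}_{h,k} - \mathbf{x}_{h,\star} \Vert \leq \mathcal{R}(\mathbf{x}_{h,0})$. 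Strong convexity (Assumption~\ref{assm:NeMOLipAssm}) guarantees this sublevel set is bounded, so $\mathcal{R}(\mathbf{x}_{h,0})$ is finite and the bound is meaningful.

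Next I would use convexity of $f_h$ together with Cauchy--Schwarz to write
\begin{equation*}
f_{h,k} - f_{h,\star} \leq \nabla f_{h,k}^T ( \mathbf{x}_{h,k} - \mathbf{x}_{h,\star} ) \leq \Vert \nabla f_{h,k} \Vert \, \Vert \mathbf{x}_{h,k} - \mathbf{x}_{h,\star} \Vert \leq \mathcal{R}(\mathbf{x}_{h,0}) \Vert \nabla f_{h,k} \Vert .
\end{equation*}
Rearranging gives $\Vert \nabla f_{h,k} \Vert^2 \geq (f_{h,k} - f_{h,\star})^2 / \mathcal{R}^2(\mathbf{x}_{h,0})$, and substituting this into the guaranteed decrease $f_{h,k} - f_{h,k+1} \geq \Lambda \Vert \nabla f_{h,k} \Vert^2$ from Lemma~\ref{lm: AllstepFimprove} yields the claim directly.

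There is no serious obstacle here; the argument is the standard bridge from a gradient-norm decrease to a sublinear rate, in the spirit of the techniques cited from \cite{Beck2013} and \cite{Boyd:2004:CO:993483}. The only point requiring a moment's care is justifying that the iterate never leaves the initial sublevel set, which is precisely where monotonicity of the Armijo line search is invoked; this is the reason it is essential that \emph{both} correction types produce a genuine descent direction before backtracking is applied, so that the descent guarantee $\nabla f_{h,k}^T \mathbf{d} < 0$ holds at every accepted step.
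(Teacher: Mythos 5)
Your proof is correct and follows essentially the same route as the paper's: convexity plus Cauchy--Schwarz gives $f_{h,k} - f_{h,\star} \leq \mathcal{R}(\mathbf{x}_{h,0})\Vert \nabla f_{h,k}\Vert$, which is then combined with the per-step decrease of Lemma~\ref{lm: AllstepFimprove}. Your explicit justification that the iterates remain in the initial sublevel set (via monotonicity of the Armijo descent steps) is a detail the paper leaves implicit, but it is not a different argument.
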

\begin{proof}
By convexity, for $k=0,1,2,\dots$,
\begin{eqnarray*}
f_{h,k} - f_{h,\star} &\leq &   \langle \nabla f_{h,k},\mathbf{x}_{h,k}-\mathbf{x}_{h,\star} \rangle ,
\\
&\leq &   \Vert \nabla f_{h,k}\Vert \ \Vert \mathbf{x}_{h,k} -\mathbf{x}_{h,\star}\Vert ,
\\
&\leq &  \mathcal{R}(\mathbf{x}_{h,0}) \Vert \nabla f_{h,k} \Vert  .
\end{eqnarray*}

Using Lemma \ref{lm: AllstepFimprove}, we have
\begin{eqnarray*}
f_{h,k} - f_{h,\star} &\leq & \mathcal{R}(\mathbf{x}_{h,0}) \sqrt{\Lambda^{-1}\left(f_{h,k} - f_{h,k+1} \right)},
\\
\left(\frac{f_{h,k} - f_{h,\star}}{\mathcal{R}(\mathbf{x}_{h,0})}\right)^2 &\leq & \Lambda^{-1}\left(f_{h,k} - f_{h,k+1} \right),
\\
\Lambda \left(\frac{f_{h,k} - f_{h,\star}}{\mathcal{R}(\mathbf{x}_{h,0})}\right)^2 &\leq & f_{h,k} - f_{h,k+1} ,
\end{eqnarray*}
as required.
\end{proof}

The constant $\Lambda$ in Lemma \ref{lm:NeMOFunValDiff} depends on $\Lambda_h$, which is introduced in Assumption \ref{assum: DirectStp}. This constant depends on both the fine correction step chosen and the user-defined parameter $\rho_1$ in Armijo rule. For instance,
\begin{equation*}
\Lambda_h = 
\begin{cases}
\dfrac{\rho_1\mu_h}{L_h^2} & \text{if} \quad \mathbf{d}_{h,k} = - [\nabla^2 f_{h,k}]^{-1}\nabla f_{h,k},\\
\dfrac{\rho_1}{L_h} & \text{if} \quad \mathbf{d}_{h,k} = - \nabla f_{h,k}.
\end{cases}
\end{equation*}
The above results can be derived via direct computation on bounding the Armijo condition. In order to derive the convergence rate in this section, we use the following lemma on nonnegative scalar sequences.
\begin{lemma}{\cite{Beck2013}}\label{lm:bkseqLem}
Let $\{A_k\}_{k\geq 0}$ be a nonnegative sequence of the real numbers satisfying 
\[
A_k - A_{k+1} \geq \gamma A_k^2 , \quad k=0,1,2,\dots,
\] 
and 
\[
A_0 \leq \frac{1}{q\gamma}
\]
for some positive $\gamma$ and $q$. Then
\[
A_k \leq \frac{1}{\gamma(k+q)}, \quad	k=0,1,2,\dots,
\]
and so
\[
A_k \leq \frac{1}{\gamma k}, \quad	k=0,1,2,\dots .
\]
\end{lemma}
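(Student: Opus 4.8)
The plan is to linearize the quadratic recursion by passing to reciprocals $1/A_k$, then telescope. First I would record the immediate consequence that the sequence is nonincreasing: since $A_k - A_{k+1} \geq \gamma A_k^2 \geq 0$ and $\gamma > 0$, we have $0 \leq A_{k+1} \leq A_k$ for every $k$. This monotonicity is the ingredient that makes the whole argument work, so I would isolate it at the outset. I would also dispose of the degenerate case first: if $A_{k_0} = 0$ for some index $k_0$, then $A_k = 0$ for all $k \geq k_0$ by monotonicity and nonnegativity, and the claimed bound $A_k \leq 1/(\gamma(k+q))$ holds trivially there; so for the reciprocal manipulation I may assume $A_k > 0$ on the range of interest.

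Next I would perform the key manipulation. Dividing the hypothesis $A_k - A_{k+1} \geq \gamma A_k^2$ by the positive quantity $A_k A_{k+1}$ gives
\[
\frac{1}{A_{k+1}} - \frac{1}{A_k} = \frac{A_k - A_{k+1}}{A_k A_{k+1}} \geq \frac{\gamma A_k^2}{A_k A_{k+1}} = \gamma \frac{A_k}{A_{k+1}} \geq \gamma,
\]
where the final inequality is exactly where the monotonicity $A_k \geq A_{k+1} > 0$ is used, so that $A_k/A_{k+1} \geq 1$. This converts the awkward quadratic decrease into a clean additive increase of at least $\gamma$ per step in the reciprocal sequence.

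Finally I would telescope this bound. Summing $1/A_{j+1} - 1/A_j \geq \gamma$ over $j = 0, 1, \dots, k-1$ yields
\[
\frac{1}{A_k} \geq \frac{1}{A_0} + k\gamma,
\]
and invoking the initial condition $A_0 \leq 1/(q\gamma)$, i.e. $1/A_0 \geq q\gamma$, gives $1/A_k \geq \gamma(k+q)$. Inverting produces the first claimed bound $A_k \leq 1/(\gamma(k+q))$, and since $q > 0$ forces $k + q \geq k$, the weaker bound $A_k \leq 1/(\gamma k)$ follows at once (trivially so at $k=0$, where the right-hand side is infinite).

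I do not expect a genuine obstacle here, as this is a classical estimate; the only points requiring care are bookkeeping ones. The main subtlety is justifying the step $A_k/A_{k+1} \geq 1$, which silently relies on both the monotonicity derived in the first paragraph and the positivity of $A_{k+1}$, and the accompanying need to treat the vanishing case separately so that no division by zero occurs. Once monotonicity and the degenerate case are handled up front, the reciprocal telescoping is routine.
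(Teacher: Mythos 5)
Your proof is correct and complete: the monotonicity observation, the passage to reciprocals giving $1/A_{k+1} - 1/A_k \geq \gamma$, and the telescoping with $1/A_0 \geq q\gamma$ are exactly what is needed, and your explicit treatment of the case where some $A_{k_0}=0$ closes the one genuine gap in the naive version of this argument. The paper itself does not reproduce a proof (it defers to Lemma 3.5 of the cited reference of Beck and Tetruashvili), and your argument is precisely the classical one used there, so your approach is essentially the same as the paper's source.
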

\begin{proof}
See Lemma 3.5 in \cite{Beck2013}.
\end{proof}
Combining the above results, we obtain the rate of convergence.   
\begin{theorem}\label{thm: worseLinCov}
Let $\{\mathbf{x}_k\}_{k\geq 0}$ be the sequence that is generated by Algorithm \ref{alg: MLGM}. Then,
\[
f_{h,k}- f_{h,\star} \leq \frac{\mathcal{R}^2(\mathbf{x}_{h,0})}{\Lambda}\frac{1}{2+k},
\]
where $\Lambda$ and $\mathcal{R}(\cdot)$ are defined as in Lemma \ref{lm: AllstepFimprove} and \ref{lm:NeMOFunValDiff}, respectively.
\end{theorem}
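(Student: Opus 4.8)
The plan is to recognize that the asserted bound is precisely the conclusion of the scalar-sequence Lemma \ref{lm:bkseqLem}, so the whole argument reduces to casting the function-value gaps into the form that lemma requires and then checking its two hypotheses. Concretely, I would set $A_k \triangleq f_{h,k} - f_{h,\star}$, which is nonnegative because $f_{h,\star}$ is the optimal value of (\ref{eq: fine model}), and put $\gamma \triangleq \Lambda/\mathcal{R}^2(\mathbf{x}_{h,0})$. With these identifications, Lemma \ref{lm:NeMOFunValDiff} reads exactly $A_k - A_{k+1} \geq \gamma A_k^2$ for every $k$, which is the first hypothesis of Lemma \ref{lm:bkseqLem}; note that this holds uniformly for both fine and coarse correction steps because $\Lambda$ was defined as a minimum precisely to absorb both cases.

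It then remains to verify the second hypothesis $A_0 \leq 1/(q\gamma)$ with the choice $q=2$, which is what produces the denominator $2+k$. First I would apply $L_h$-smoothness at the optimum, where $\nabla f_{h,\star}=0$, to obtain $A_0 = f_{h,0}-f_{h,\star} \leq \tfrac{L_h}{2}\Vert\mathbf{x}_{h,0}-\mathbf{x}_{h,\star}\Vert^2$. Since $\mathbf{x}_{h,0}$ trivially lies in its own sublevel set, the definition of $\mathcal{R}(\cdot)$ in Lemma \ref{lm:NeMOFunValDiff} gives $\Vert\mathbf{x}_{h,0}-\mathbf{x}_{h,\star}\Vert \leq \mathcal{R}(\mathbf{x}_{h,0})$, so $A_0 \leq \tfrac{L_h}{2}\mathcal{R}^2(\mathbf{x}_{h,0})$. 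The target inequality $A_0 \leq \mathcal{R}^2(\mathbf{x}_{h,0})/(2\Lambda)$ therefore follows as soon as $\Lambda \leq 1/L_h$.

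The single genuinely quantitative step, and the only place I expect any friction, is confirming $\Lambda \leq 1/L_h$ from the admissible parameter ranges. Because $\Lambda$ is a minimum, it suffices to bound the coarse-step term: using $\rho_1 < \tfrac12$, $\beta_{ls}<1$, $\mu_h \leq L_h$ (Assumption \ref{assm:NeMOLipAssm}), and $\kappa < \Vert\mathbf{R}\Vert \leq \omega$ (so $\kappa^2/\omega^2 < 1$), I would estimate $\Lambda \leq \tfrac{\rho_1\kappa^2\beta_{ls}\mu_h}{\omega^2 L_h^2} < \tfrac{1}{L_h}$, closing the initial-condition check. With both hypotheses established, Lemma \ref{lm:bkseqLem} applied with $q=2$ delivers $A_k \leq 1/(\gamma(k+2)) = \mathcal{R}^2(\mathbf{x}_{h,0})/(\Lambda(2+k))$, which is exactly the claimed rate; the monotonicity of $\{f_{h,k}\}$ guaranteed by Lemma \ref{lm: AllstepFimprove} is what implicitly keeps the iterates inside the sublevel set, but it is not otherwise invoked once the recursion and the initial bound are in place.
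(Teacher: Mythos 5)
Your proof is correct and takes essentially the same route as the paper's: both identify $A_k = f_{h,k}-f_{h,\star}$, $\gamma = \Lambda/\mathcal{R}^2(\mathbf{x}_{h,0})$, $q=2$, feed the recursion from Lemma \ref{lm:NeMOFunValDiff} into Lemma \ref{lm:bkseqLem}, and verify the initial condition via $L_h$-smoothness at $\mathbf{x}_{h,\star}$ together with the definition of $\mathcal{R}(\cdot)$. The only divergence is in the small constant check: to get $A_0 \leq \mathcal{R}^2(\mathbf{x}_{h,0})/(2\Lambda)$ you prove $\Lambda \leq 1/L_h$ using $\kappa < \Vert\mathbf{R}\Vert \leq \omega$, whereas the paper chains $\tfrac{L_h}{2} \leq \tfrac{L_h^2}{2\mu_h\beta_{ls}\kappa^2\rho_1} \leq \tfrac{1}{2\Lambda}$ (which silently drops the $\omega^2$ factor, i.e., assumes $\omega \geq 1$), so your verification is if anything slightly more careful.
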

\begin{proof}
From Lemma \ref{lm:NeMOFunValDiff},
\[
f_{h,k} - f_{h,k+1} \geq \frac{\Lambda}{\mathcal{R}^2(\mathbf{x}_{h,0})} \left(f_{h,k} - f_{h,\star} \right)^2.
\]
and so
\[
(f_{h,k} - f_{h,\star}) - (f_{h,k+1} - f_{h,\star}) \geq \frac{\Lambda}{\mathcal{R}^2(\mathbf{x}_{h,0})} \left(f_{h,k} - f_{h,\star} \right)^2.
\]
Also, we have
\begin{eqnarray*}
f_{h,0} - f_{h,\star} \leq \frac{L_h}{2} \Vert \mathbf{x}_{h,0} - \mathbf{x}_{h,\star} \Vert^2 
\leq \frac{L_h}{2}\mathcal{R}^2(\mathbf{x}_{h,0})
\leq \frac{L_h^2\mathcal{R}^2(\mathbf{x}_{h,0})}{2\mu_h}
&\leq &\frac{L_h^2\mathcal{R}^2(\mathbf{x}_{h,0})}{2\mu_h\beta_{ls}\kappa^2 \rho_1},
\\
&\leq &\frac{\mathcal{R}^2(\mathbf{x}_{h,0})}{2\Lambda},
\end{eqnarray*}
where the first inequality holds because of first order condition and the definition of $L_h$ in Assumption \ref{assm:NeMOLipAssm}. Let's $A_k \triangleq f_{h,k}- f_{h,\star}$, $\gamma \triangleq \dfrac{\Lambda}{{R}^2(\mathbf{x}_{h,0})}$, and $q \triangleq 2$. By applying Lemma \ref{lm:bkseqLem}, we have
\[
f_{h,k}- f_{h,\star} \leq \frac{\mathcal{R}^2(\mathbf{x}_{h,0})}{\Lambda}\frac{1}{2+k},
\]
as required.
\end{proof}
Theorem \ref{thm: worseLinCov} provides the sublinear convergence of Algorithm \ref{alg: MLGM}. We emphasize that the rate is inversely proportional to $\Lambda = \min \{\Lambda_h , \rho_1 \kappa^2 \mu_h/ L_h^2\}$, and so small $\kappa$ would result in slow convergence. Therefore, even though $\kappa$ could be arbitrary small, it is not desirable in terms of worse case complexity. Note that $\kappa$ is a user-defined parameter for determining whether the coarse correction step should be used. If $\kappa$ is chosen to be too large, then it is less likely that the coarse correction step would be used. In the extreme case where $\kappa \geq \Vert \mathbf{R} \Vert$, the coarse correction step would not be deployed because,
\[
\Vert \mathbf{R} \nabla f_{h,k} \Vert \leq \Vert \mathbf{R} \Vert  \Vert\nabla f_{h,k} \Vert ,
\]
and so Algorithm \ref{alg: MLGM} reduces to the standard variable metric method. Therefore, there is a trade-off between the worse case complexity and the likelihood that the coarse correction step is deployed.

\subsection{Maximum Number of Iterations of Coarse Correction Step}
We now discuss the maximum number of coarse correction steps in Algorithm \ref{alg: MLGM}. The following lemma will state the sufficient conditions for not taking any coarse correction step.
\begin{lemma}
No coarse correction step in Algorithm \ref{alg: MLGM} will be taken when
\[
\Vert \nabla f_{h,k} \Vert \leq  \frac{\epsilon}{\omega},
\]
where $\omega = \max \{ \Vert \mathbf{P} \Vert , \Vert \mathbf{R} \Vert \}$, and $\epsilon$ is a user-defined parameter in Algorithm \ref{alg: MLGM}.
\end{lemma}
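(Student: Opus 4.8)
The plan is to show that the hypothesis forces the second of the two screening conditions in Algorithm \ref{alg: MLGM} to fail, so that the algorithm necessarily selects the fine correction step. Recall that a coarse correction step $\hat{\mathbf{d}}_{h,k}$ is taken only when \emph{both} $\Vert \mathbf{R} \nabla f_{h,k} \Vert > \kappa \Vert \nabla f_{h,k} \Vert$ and $\Vert \mathbf{R} \nabla f_{h,k} \Vert > \epsilon$ hold; if either inequality is violated, the branching rule reverts the update to the fine correction step $\mathbf{d}_{h,k}$. Hence it suffices to contradict just one of these two inequalities under the assumption $\Vert \nabla f_{h,k} \Vert \leq \epsilon / \omega$.

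First I would bound $\Vert \mathbf{R} \nabla f_{h,k} \Vert$ by submultiplicativity of the operator norm, writing $\Vert \mathbf{R} \nabla f_{h,k} \Vert \leq \Vert \mathbf{R} \Vert \, \Vert \nabla f_{h,k} \Vert \leq \omega \Vert \nabla f_{h,k} \Vert$, where the second inequality uses $\Vert \mathbf{R} \Vert \leq \omega = \max\{\Vert \mathbf{P}\Vert, \Vert \mathbf{R}\Vert\}$ as given in Proposition \ref{prop:coarseHbasic}. Substituting the hypothesis then yields $\Vert \mathbf{R} \nabla f_{h,k} \Vert \leq \omega \cdot (\epsilon/\omega) = \epsilon$, so the condition $\Vert \mathbf{R} \nabla f_{h,k} \Vert > \epsilon$ is falsified. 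By the branching rule in Algorithm \ref{alg: MLGM}, the algorithm therefore takes a fine correction step and no coarse correction step is performed, as claimed.

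There is no genuine obstacle here: the argument is a single application of the operator-norm inequality combined with the explicit threshold $\epsilon$ that gates coarse steps. The only point requiring care is to note that the two screening conditions are joined by a logical \emph{and}, so falsifying the weaker $\epsilon$-threshold alone already rules out a coarse step, independently of the $\kappa$-condition; one does not need to reason about $\kappa$ at all.
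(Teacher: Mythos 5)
Your proposal is correct and follows essentially the same argument as the paper: both bound $\Vert \mathbf{R} \nabla f_{h,k} \Vert \leq \omega \Vert \nabla f_{h,k} \Vert \leq \epsilon$ via the operator norm and conclude that the $\epsilon$-threshold condition fails, so the coarse step cannot be selected. Your additional remark that only one of the two conjoined screening conditions needs to be falsified is a sound clarification, but the core reasoning is identical to the paper's proof.
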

\begin{proof}
Recall that in Algorithm \ref{alg: MLGM}, the coarse step is only taken when $\Vert \mathbf{R} \nabla f_{h,k} \Vert > \epsilon $. We have,
\begin{equation*}
\Vert \mathbf{R} \nabla f_{h,k} \Vert \leq  \omega \Vert \nabla f_{h,k} \Vert \leq  \omega \frac{\epsilon}{\omega} = \epsilon,
\end{equation*}
and so no coarse correction step will be taken.
\end{proof}
The above lemma states the condition when the coarse correction step would not be performed. We then investigate the maximum number of iterations to achieve that sufficient condition.

\begin{lemma}\label{lm:NeMOWorseCount}
Let $\{\mathbf{x}_k\}_{k \geq 0}$ be a sequence generated by Algorithm \ref{alg: MLGM}. Then, $\forall \bar{\epsilon}, \bar{k} >0 $ such that,
\[
\bar{k} \geq \left(\frac{1}{\bar{\epsilon}}\right)^2\frac{\mathcal{R}^2(\mathbf{x}_{h,0})}{\Lambda^2}-2,
\]
we obtain
\[
\Vert \nabla f_{h} (\mathbf{x}_{h,\bar{k}})\Vert \leq \bar{\epsilon},
\]
where $\Lambda$ and $\mathcal{R}(\cdot)$ are defined as in Lemma \ref{lm: AllstepFimprove} and \ref{lm:NeMOFunValDiff}, respectively.
\end{lemma}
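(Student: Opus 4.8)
The plan is to bound the gradient norm at the single iterate $\bar{k}$ by composing the two convergence results already in hand: the per-iteration decrease guarantee of Lemma~\ref{lm: AllstepFimprove} and the sublinear rate of Theorem~\ref{thm: worseLinCov}. The crucial observation is that I do not need any smoothness-based inequality such as $\Vert \nabla f_{h,\bar{k}} \Vert^2 \leq 2 L_h (f_{h,\bar{k}} - f_{h,\star})$; instead the factor $\Lambda^{-2}$ in the claimed threshold arises naturally by applying $\Lambda^{-1}$ twice, once from the single-step decrease and once from the rate.

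First I would apply Lemma~\ref{lm: AllstepFimprove} at the index $k=\bar{k}$, which gives $f_{h,\bar{k}} - f_{h,\bar{k}+1} \geq \Lambda \Vert \nabla f_{h,\bar{k}} \Vert^2$. Since $f_{h,\star}$ is the global minimum we have $f_{h,\bar{k}+1} \geq f_{h,\star}$, so the left-hand side is at most $f_{h,\bar{k}} - f_{h,\star}$, whence $\Vert \nabla f_{h,\bar{k}} \Vert^2 \leq \Lambda^{-1}(f_{h,\bar{k}} - f_{h,\star})$. Next I would invoke Theorem~\ref{thm: worseLinCov} to replace the gap by its sublinear bound $f_{h,\bar{k}} - f_{h,\star} \leq \mathcal{R}^2(\mathbf{x}_{h,0}) / (\Lambda (2+\bar{k}))$, yielding $\Vert \nabla f_{h,\bar{k}} \Vert^2 \leq \mathcal{R}^2(\mathbf{x}_{h,0}) / (\Lambda^2 (2+\bar{k}))$.

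The final step is purely algebraic: I would impose $\mathcal{R}^2(\mathbf{x}_{h,0}) / (\Lambda^2 (2+\bar{k})) \leq \bar{\epsilon}^2$ and solve for $\bar{k}$, which rearranges to $\bar{k} \geq (1/\bar{\epsilon})^2 \mathcal{R}^2(\mathbf{x}_{h,0}) / \Lambda^2 - 2$, exactly the stated threshold; taking square roots then delivers $\Vert \nabla f_{h,\bar{k}} \Vert \leq \bar{\epsilon}$. There is no genuine obstacle here, as the argument is a short composition of earlier results, but the one point that requires care is recognising that the conclusion should be drawn at the fixed index $\bar{k}$ rather than as a minimum over past iterates; this is legitimate precisely because Lemma~\ref{lm: AllstepFimprove} holds at every individual iteration, so the inequality chain applies verbatim at $k=\bar{k}$. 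I would also remark that the appearance of $\Lambda$ squared, in place of a smoothness constant such as $L_h$, may make this iteration count looser than one derived from the descent lemma, but it has the advantage of keeping the bound expressed entirely in terms of the algorithm's own decrease constant $\Lambda$.
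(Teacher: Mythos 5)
Your proposal is correct and follows essentially the same route as the paper's own proof: both chain Lemma~\ref{lm: AllstepFimprove} (using $f_{h,\bar{k}+1} \geq f_{h,\star}$ to pass from the one-step decrease to the optimality gap) with the sublinear bound of Theorem~\ref{thm: worseLinCov}, producing the factor $\Lambda^{-2}$, and then solve the resulting inequality for $\bar{k}$. The only difference is presentational, so there is nothing further to reconcile.
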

\begin{proof}
From Lemma \ref{lm: AllstepFimprove}, we know that
\begin{eqnarray*}
\Lambda  \Vert\nabla f_{h,k} \Vert^2 &\leq & f_{h,k} - f_{h,k+1} .
\end{eqnarray*}
Also, from Theorem \ref{thm: worseLinCov}, we have,
\[
f_{h,k}- f_{h,\star} \leq \frac{\mathcal{R}^2(\mathbf{x}_{h,0})}{\Lambda}\frac{1}{2+k}.
\]
Therefore,
\begin{eqnarray*}
\Vert\nabla f_{h,k} \Vert^2 &\leq & \frac{1}{\Lambda} \left( f_{h,k} - f_{h,k+1}\right) ,
\\
&\leq & \frac{1}{\Lambda} \left( f_{h,k}- f_{h,\star} \right) ,
\\
&\leq &  \frac{\mathcal{R}^2(\mathbf{x}_{h,0})}{\Lambda^2}\frac{1}{2+k}  .
\end{eqnarray*}
For
\[
k = \left(\frac{1}{\bar{\epsilon}}\right)^2\frac{\mathcal{R}^2(\mathbf{x}_{h,0})}{\Lambda^2}-2 ,
\]
we have 
\begin{equation*}
\Vert\nabla f_{h,k} \Vert \leq \sqrt{\frac{\mathcal{R}^2(\mathbf{x}_{h,0})}{\Lambda^2}\frac{1}{2+k}} \leq \sqrt{\frac{\mathcal{R}^2(\mathbf{x}_{h,0})}{\Lambda^2} \left(\bar{\epsilon}\right)^2\frac{\Lambda^2}{\mathcal{R}^2(\mathbf{x}_{h,0})}} = \bar{\epsilon},
\end{equation*}
as required.
\end{proof}
By integrating the above results, we obtain the maximum number of iterations to achieve $\Vert \nabla f_{h,k} \Vert \leq \epsilon/\omega $. That is, no coarse correction step will be taken after
\[
\left(\frac{\omega}{\epsilon}\right)^2\frac{\mathcal{R}^2(\mathbf{x}_{h,0})}{\Lambda^2}-2 \quad \text{iterations}.
\]
Notice that the smaller $\epsilon$, the more coarse correction step will be taken. Depending on the choice of $\mathbf{d}_{h,k}$, the choice of $\epsilon$ could be different. For example, if $\mathbf{d}_{h,k}$ is chosen as the Newton step where $\mathbf{d}_{h,k} = -[\nabla^2 f_{h,k}]^{-1} \nabla f_{h,k}$, one good choice of $\epsilon$ could be $3 \omega (1-2\rho_1) \mu_h^2 /L_h$ if $\mu_h$ and $L_h$ are known. This is because Newton's method achieves quadratic rate of convergence when $\Vert \nabla f_{h,k} \Vert \leq 3  (1-2\rho_1) \mu_h^2 /L_h$ \cite{Boyd:2004:CO:993483}. Therefore, for such $\epsilon$, no coarse correction step would be taken when the Newton method is in its quadratically convergent phase.

\subsection{Quadratic Phase in Subspace}
We now state the required condition for stepsize $\alpha_{h,k}=1$, and then we will show that when $\Vert \mathbf{R}\nabla f_{h,k}\Vert$ is sufficiently small, the coarse correction step would reduce $ \Vert \mathbf{R}\nabla f_{h,k}\Vert$ quadratically. The results below are analogous to the analysis of the Newton's method in \cite{Boyd:2004:CO:993483}. 

\begin{lemma}\label{lm: stepsize1}
Suppose coarse correction step $\hat{\mathbf{d}}_{h,k}$ in Algorithm \ref{alg: MLGM} is taken, then $\alpha_{h,k} = 1$ when
\[
\Vert \mathbf{R} \nabla f_{h,k}\Vert  \leq \eta = \dfrac{3\mu_h^2}{ M_h} (1-2\rho_1),
\] 
where $\rho_1$ is an user-defined parameter in Algorithm \ref{alg: MLGM}. $M_h$ and $\mu_h$ are defined in Assumption \ref{assm:NeMOLipAssm}. 
\end{lemma}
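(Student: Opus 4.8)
The plan is to mirror the classical argument that the full Newton step satisfies the Armijo rule inside the quadratically convergent region (cf. \cite{Boyd:2004:CO:993483}), but carried out with the subspace quantities $\chi_{H,k}$ and $\hat{\mathbf{d}}_{h,k}$ in place of the Newton decrement and the Newton step. Everything reduces to showing that, under the stated bound on $\Vert\mathbf{R}\nabla f_{h,k}\Vert$, the trial point $\mathbf{x}_{h,k}+\hat{\mathbf{d}}_{h,k}$ already satisfies $f_h(\mathbf{x}_{h,k}+\hat{\mathbf{d}}_{h,k})\le f_{h,k}+\rho_1\nabla f_{h,k}^T\hat{\mathbf{d}}_{h,k}$, so that the backtracking loop never reduces the stepsize below $1$ and returns $\alpha_{h,k}=1$.

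First I would write down the cubic upper model implied by the Lipschitz continuity of $\nabla^2 f_h$ in Assumption \ref{assm:NeMOLipAssm}: for any direction $\mathbf{d}$,
\[
f_h(\mathbf{x}_{h,k}+\mathbf{d})\le f_{h,k}+\nabla f_{h,k}^T\mathbf{d}+\tfrac{1}{2}\mathbf{d}^T\nabla^2 f_{h,k}\mathbf{d}+\tfrac{M_h}{6}\Vert\mathbf{d}\Vert^3 .
\]
Setting $\mathbf{d}=\hat{\mathbf{d}}_{h,k}$ and invoking the two identities for $\chi_{H,k}$ stated just before this subsection, namely $\nabla f_{h,k}^T\hat{\mathbf{d}}_{h,k}=-\chi_{H,k}^2$ and $\hat{\mathbf{d}}_{h,k}^T\nabla^2 f_{h,k}\hat{\mathbf{d}}_{h,k}=\chi_{H,k}^2$, the linear and quadratic terms collapse to $-\tfrac{1}{2}\chi_{H,k}^2$, leaving
\[
f_h(\mathbf{x}_{h,k}+\hat{\mathbf{d}}_{h,k})\le f_{h,k}-\tfrac{1}{2}\chi_{H,k}^2+\tfrac{M_h}{6}\Vert\hat{\mathbf{d}}_{h,k}\Vert^3 .
\]

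Next I would control the cubic remainder. Strong convexity together with the second $\chi_{H,k}$ identity gives $\mu_h\Vert\hat{\mathbf{d}}_{h,k}\Vert^2\le\hat{\mathbf{d}}_{h,k}^T\nabla^2 f_{h,k}\hat{\mathbf{d}}_{h,k}=\chi_{H,k}^2$, hence $\Vert\hat{\mathbf{d}}_{h,k}\Vert^3\le\mu_h^{-1}\chi_{H,k}^2\Vert\hat{\mathbf{d}}_{h,k}\Vert$. Substituting this and comparing with the Armijo target $f_{h,k}-\rho_1\chi_{H,k}^2$, the acceptance of $\alpha=1$ reduces to the scalar inequality $\tfrac{M_h}{6\mu_h}\Vert\hat{\mathbf{d}}_{h,k}\Vert\le\tfrac{1}{2}-\rho_1$, i.e. it suffices that $\Vert\hat{\mathbf{d}}_{h,k}\Vert\le\tfrac{3\mu_h}{M_h}(1-2\rho_1)$. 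The closing step is to convert this condition on the step length into the advertised condition on $\Vert\mathbf{R}\nabla f_{h,k}\Vert$: using Proposition \ref{prop:coarseHbasic} to lower-bound the spectrum of $\nabla_H^2 f_{h,k}$ (and thus upper-bound $[\nabla_H^2 f_{h,k}]^{-1}$) together with $\hat{\mathbf{d}}_{h,k}=-\mathbf{P}[\nabla_H^2 f_{h,k}]^{-1}\mathbf{R}\nabla f_{h,k}$, one bounds $\Vert\hat{\mathbf{d}}_{h,k}\Vert$ by $\mu_h^{-1}\Vert\mathbf{R}\nabla f_{h,k}\Vert$, so that $\Vert\mathbf{R}\nabla f_{h,k}\Vert\le\eta=\tfrac{3\mu_h^2}{M_h}(1-2\rho_1)$ forces exactly $\Vert\hat{\mathbf{d}}_{h,k}\Vert\le\tfrac{3\mu_h}{M_h}(1-2\rho_1)$.

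I expect this final translation to be the delicate point. The clean identities deliver the acceptance criterion in terms of the intrinsic subspace quantities $\chi_{H,k}$ and $\Vert\hat{\mathbf{d}}_{h,k}\Vert$, whereas the algorithm switches on $\Vert\mathbf{R}\nabla f_{h,k}\Vert$; bridging them requires the spectral estimates of Proposition \ref{prop:coarseHbasic}, which a priori also carry the operator constants $\xi$ and $\omega$. Recovering the stated threshold $\eta$ without those constants hinges on using the bound $\chi_{H,k}^2\le\mu_h^{-1}\Vert\mathbf{R}\nabla f_{h,k}\Vert^2$ (equivalently $\nabla_H^2 f_{h,k}\succeq\mu_h\mathbf{I}$), so the crux of the write-up is justifying precisely this estimate for the coarse Hessian inverse. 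Everything else is the routine cubic-bound bookkeeping familiar from the convergence analysis of Newton's method.
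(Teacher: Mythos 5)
Your proposal is correct and follows essentially the same route as the paper's proof: a cubic upper bound coming from the Hessian Lipschitz constant (the paper derives it by twice integrating $\vert\tilde f''(\alpha)-\tilde f''(0)\vert\le\alpha M_h\Vert\hat{\mathbf{d}}_{h,k}\Vert^3$), the two $\chi_{H,k}$ identities to collapse the first- and second-order terms, the strong-convexity estimate $\mu_h\Vert\hat{\mathbf{d}}_{h,k}\Vert^2\le\chi_{H,k}^2$, and a final spectral bound translating the acceptance condition into $\Vert\mathbf{R}\nabla f_{h,k}\Vert\le\eta$. The translation step you single out as the crux is handled in the paper exactly as you anticipate: it asserts $\chi_{H,k}\le\mu_h^{-1/2}\Vert\mathbf{R}\nabla f_{h,k}\Vert$ (equivalently $\nabla_H^2 f_{h,k}\succeq\mu_h\mathbf{I}$) without the constants $\omega,\xi$ of Proposition \ref{prop:coarseHbasic}, and chaining that estimate with $\mu_h\Vert\hat{\mathbf{d}}_{h,k}\Vert^2\le\chi_{H,k}^2$ yields precisely your bound $\Vert\hat{\mathbf{d}}_{h,k}\Vert\le\mu_h^{-1}\Vert\mathbf{R}\nabla f_{h,k}\Vert$, so the two write-ups differ only in phrasing the acceptance criterion via $\Vert\hat{\mathbf{d}}_{h,k}\Vert$ rather than via $\chi_{H,k}$.
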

\begin{proof}
By Lipschitz continuity (\ref{eq:MhDef}),
\begin{equation*}
\Vert \nabla^2 f_h( \mathbf{x}_{h,k}+\alpha \hat{\mathbf{d}}_{h,k}) - \nabla^2 f_{h,k} \Vert \leq \alpha M_h \Vert \hat{\mathbf{d}}_{h,k}\Vert,
\end{equation*}
which implies
\begin{equation*}
\Vert \hat{\mathbf{d}}_{h,k}^T( \nabla^2 f_h(\mathbf{x}_{h,k}+\alpha \hat{\mathbf{d}}_{h,k}) - \nabla^2 f_{h,k} )\hat{\mathbf{d}}_{h,k}\Vert  \leq \alpha M_h \Vert \hat{\mathbf{d}}_{h,k}\Vert ^3.
\end{equation*}
Let $\tilde{f}(\alpha) = f_h(\mathbf{x}_{h,k}+\alpha \hat{\mathbf{d}}_{h,k})$, then the above inequality can be rewritten as
\begin{equation*}
| \tilde{f}''(\alpha) - \tilde{f}''(0) | \leq \alpha M_h \Vert \hat{\mathbf{d}}_{h,k}\Vert ^3,
\end{equation*}
and so 
\begin{equation*}
 \tilde{f}''(\alpha) \leq \tilde{f}''(0) + \alpha M_h \Vert \hat{\mathbf{d}}_{h,k}\Vert^3.
\end{equation*}
Since $\tilde{f}''(0) = \hat{\mathbf{d}}_{h,k}^T \nabla^2 f_{h,k} \hat{\mathbf{d}}_{h,k} = \chi_{H,k}^2$,
\begin{equation*}
 \tilde{f}''(\alpha) \leq \chi_{H,k}^2 + \alpha M_h \Vert \hat{\mathbf{d}}_{h,k} \Vert^3.
\end{equation*}
By integration,
\begin{equation*}
 \tilde{f}'(\alpha) \leq \tilde{f}'(0) + \alpha \chi_{H,k}^2 + (\alpha^2/2) M_h \Vert \hat{\mathbf{d}}_{h,k} \Vert^3.
\end{equation*}
Similarly, $\tilde{f}'(0) = \nabla f_{h,k}^T \hat{\mathbf{d}}_{h,k} = -\chi_{H,k}^2$, and so
\begin{equation*}
 \tilde{f}'(\alpha) \leq -\chi_{H,k}^2 + \alpha \chi_{H,k}^2 + (\alpha^2/2) M_h \Vert \hat{\mathbf{d}}_{h,k} \Vert^3.
\end{equation*}
Integrating the above inequality, we obtain
\begin{equation*}
\tilde{f}(\alpha) \leq \tilde{f}(0) - \alpha\chi_{H,k}^2 + (\alpha^2/2) \chi_{H,k}^2 + (\alpha^3/6) M_h \Vert \hat{\mathbf{d}}_{h,k} \Vert^3.
\end{equation*}
Recall that $\mu_h \Vert \hat{\mathbf{d}}_{h,k}\Vert^2 \leq \hat{\mathbf{d}}_{h,k}^T \nabla^2 f_{h,k} \hat{\mathbf{d}}_{h,k} = \chi_{H,k}^2$; thus,
\begin{equation*}
\tilde{f}(\alpha) \leq \tilde{f}(0) - \alpha\chi_{H,k}^2 + \frac{\alpha^2}{2} \chi_{H,k}^2 + \frac{\alpha^3 M_h}{6 \mu_h^{3/2}} \chi_{H,k}^3.
\end{equation*}
Let $\alpha = 1$,
\begin{eqnarray*}
\tilde{f}(1) - \tilde{f}(0) &\leq & - \chi_{H,k}^2 + \frac{1}{2} \chi_{H,k}^2 + \frac{M_h}{6 \mu_h^{3/2}} \chi_{H,k}^3 , 
\\
&\leq & - \left( \frac{1}{2} - \frac{M_h}{6 \mu_h^{3/2}} \chi_{H,k} \right) \chi_{H,k}^2 . 
\end{eqnarray*}
Using the fact that
\[
\Vert \mathbf{R} \nabla f_{h,k}\Vert  \leq \eta = \dfrac{3\mu_h^2}{ M_h} (1-2\rho_1),
\] 
and  
\[
\chi_{H,k} = ((\mathbf{R}\nabla f_{h,k})^T[\nabla_H^2 f_{h,k}]^{-1} \mathbf{R}\nabla f_{h,k})^{1/2} \leq \frac{1}{\sqrt{\mu_h}} \Vert \mathbf{R} \nabla f_{h,k}\Vert ,
\]
we have 
\[
\chi_{H,k} \leq \frac{3\mu_h^{3/2}}{M_h} (1-2\rho_1) \quad \Longleftrightarrow \quad  \rho_1 \leq \frac{1}{2} - \frac{M_h}{6\mu_h^{3/2}} \chi_{H,k}.
\]
Therefore,
\begin{equation*}
\tilde{f}(1) - \tilde{f}(0) \leq   - \rho_1 \chi_{H,k}^2 = \rho_1 \nabla f_{h,k}^T \hat{\mathbf{d}}_{h,k}, 
\end{equation*}
and we have $\alpha_{h,k} = 1$ when $ \Vert \mathbf{R} \nabla f_{h,k}\Vert  \leq \eta$.
\end{proof}
The above lemma yields the following theorem.

\begin{theorem}\label{thm: convex Quad NewP}
Suppose the coarse correction step $\hat{\mathbf{d}}_{h,k}$ in Algorithm \ref{alg: MLGM} is taken and $\alpha_{h,k}=1$, then
\[
\Vert  \mathbf{R} \nabla f_{h,k+1}\Vert  \leq \frac{\omega^3 \xi^4 M_h}{2 \mu_h^2} \Vert \mathbf{R} \nabla f_{h,k}\Vert^2,
\]
where $M_h$ and $\mu_h$ are defined in Assumption \ref{assm:NeMOLipAssm}, $\omega = \max \{ \Vert \mathbf{P} \Vert , \Vert \mathbf{R} \Vert \}$ and $\xi = \Vert \mathbf{P}^+ \Vert$.
\end{theorem}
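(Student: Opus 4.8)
The plan is to mirror the classical quadratic-convergence argument for Newton's method, but carried out in the subspace picked out by $\mathbf{R}$ rather than in the full space $\mathbb{R}^N$. Writing $\hat{\mathbf{d}}_{h,k}=\mathbf{P}\mathbf{d}_H$ with $\mathbf{d}_H = -[\nabla_H^2 f_{h,k}]^{-1}\mathbf{R}\nabla f_{h,k}$, and using $\alpha_{h,k}=1$, the starting point is the integral representation of the gradient along the step,
\[
\nabla f_{h,k+1} = \nabla f_{h,k} + \int_0^1 \nabla^2 f_h(\mathbf{x}_{h,k} + t\hat{\mathbf{d}}_{h,k})\,\hat{\mathbf{d}}_{h,k}\,dt,
\]
which I would then left-multiply by $\mathbf{R}$.

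The crucial step is the cancellation. Adding and subtracting $\nabla^2 f_{h,k}$ inside the integral splits $\mathbf{R}\nabla f_{h,k+1}$ into a frozen-Hessian contribution $\mathbf{R}\nabla f_{h,k} + \mathbf{R}\nabla^2 f_{h,k}\mathbf{P}\mathbf{d}_H$ plus a residual integral. By the defining Galerkin system (\ref{eq:GalnewtonSys}) we have $\mathbf{R}\nabla^2 f_{h,k}\mathbf{P}\mathbf{d}_H = -\mathbf{R}\nabla f_{h,k}$, so the frozen-Hessian contribution vanishes and we are left with
\[
\mathbf{R}\nabla f_{h,k+1} = \mathbf{R}\int_0^1 \left[\nabla^2 f_h(\mathbf{x}_{h,k}+t\hat{\mathbf{d}}_{h,k}) - \nabla^2 f_{h,k}\right]\mathbf{P}\mathbf{d}_H\,dt.
\]
This is the multilevel analogue of the exact Newton cancellation, and it is precisely why the coarse model is constructed from $\mathbf{R}\nabla^2 f_{h,k}\mathbf{P}$: after projecting with $\mathbf{R}$, the restricted gradient is annihilated to first order.

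From here the estimates are routine. I would bound the norm by pulling out $\Vert\mathbf{R}\Vert$ and $\Vert\mathbf{P}\Vert$, apply the Hessian Lipschitz bound (\ref{eq:MhDef}) with argument difference $t\hat{\mathbf{d}}_{h,k}$ to produce a factor $t M_h\Vert\hat{\mathbf{d}}_{h,k}\Vert$, and integrate $\int_0^1 t\,dt = 1/2$. Using $\Vert\hat{\mathbf{d}}_{h,k}\Vert\le\Vert\mathbf{P}\Vert\,\Vert\mathbf{d}_H\Vert$ gives a bound of the form $\tfrac{1}{2}M_h\Vert\mathbf{R}\Vert\,\Vert\mathbf{P}\Vert^2\Vert\mathbf{d}_H\Vert^2$. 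Finally I would control $\Vert\mathbf{d}_H\Vert$: the lower bound $\nabla_H^2 f_{h,k}\succeq \mu_h\xi^{-2}\mathbf{I}$ from Proposition \ref{prop:coarseHbasic} yields $\Vert[\nabla_H^2 f_{h,k}]^{-1}\Vert\le \xi^2/\mu_h$, hence $\Vert\mathbf{d}_H\Vert\le(\xi^2/\mu_h)\Vert\mathbf{R}\nabla f_{h,k}\Vert$. Substituting $\Vert\mathbf{d}_H\Vert^2$ and replacing $\Vert\mathbf{R}\Vert\,\Vert\mathbf{P}\Vert^2$ by $\omega^3$ produces the stated constant $\omega^3\xi^4 M_h/(2\mu_h^2)$.

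The only genuine subtlety, and the step I would be most careful with, is the cancellation in the second paragraph: unlike full-space Newton, one does \emph{not} have $\nabla^2 f_{h,k}\hat{\mathbf{d}}_{h,k}=-\nabla f_{h,k}$, but only its $\mathbf{R}$-projection. Everything therefore hinges on performing the left-multiplication by $\mathbf{R}$ \emph{before} splitting the integral, so that the Galerkin identity (\ref{eq:GalnewtonSys}) can be invoked; the remaining operator-norm and eigenvalue bounds are standard.
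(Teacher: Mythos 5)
Your proposal is correct and follows essentially the same route as the paper's proof: both rest on the Galerkin cancellation $\mathbf{R}\nabla^2 f_{h,k}\mathbf{P}\mathbf{d}_H = -\mathbf{R}\nabla f_{h,k}$ applied after left-multiplying by $\mathbf{R}$, the integral representation of the gradient with the frozen Hessian subtracted, the Lipschitz estimate (\ref{eq:MhDef}) producing the factor $M_h/2$, and the bound $\Vert[\nabla_H^2 f_{h,k}]^{-1}\Vert \leq \xi^2/\mu_h$ from Proposition \ref{prop:coarseHbasic}. The only cosmetic difference is that the paper bounds $\Vert\hat{\mathbf{d}}_{h,k}\Vert$ directly while you bound $\Vert\mathbf{d}_H\Vert$ and multiply by $\Vert\mathbf{P}\Vert$, which yields the identical constant $\omega^3\xi^4 M_h/(2\mu_h^2)$.
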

\begin{proof}
Since $\alpha_{h,k}=1$, we have
\begin{eqnarray*}
\Vert  \mathbf{R} \nabla f_{h,k+1}\Vert  &=& \Vert \mathbf{R} \nabla f_h(\mathbf{x}_{h,k}+ \hat{\mathbf{d}}_{h,k}) - \mathbf{R} \nabla f_{h,k} - \mathbf{R} \nabla^2 f_{h,k} \mathbf{P} \tilde{\mathbf{d}}_{H,i^{\star}} \Vert 
\\
&\leq & \Vert \mathbf{R} \Vert  \ \Vert \nabla f_h(\mathbf{x}_{h,k}+ \hat{\mathbf{d}}_{h,k}) -  \nabla f_{h,k} - \nabla^2 f_{h,k} \hat{\mathbf{d}}_{h,k} \Vert 
\\
&\leq & \omega \Bigg\vert \Bigg\vert \int_0^1  (\nabla^2 f_h(\mathbf{x}_{h,k}+ t \hat{\mathbf{d}}_{h,k}) - \nabla^2 f_{h,k} ) \hat{\mathbf{d}}_{h,k} \ dt \Bigg\vert \Bigg\vert
\\
&\leq & \omega \frac{M_h}{2} \Vert \hat{\mathbf{d}}_{h,k} \Vert^2,
\end{eqnarray*} 
where $\tilde{\mathbf{d}}_{H,i^{\star}}$ is the direction $\hat{\mathbf{d}}_{h,k}$ at coarse level, i.e. $\mathbf{P} \tilde{\mathbf{d}}_{H,i^{\star}} = \hat{\mathbf{d}}_{h,k}$. Notice that 
\begin{eqnarray*}
\Vert \hat{\mathbf{d}}_{h,k} \Vert  &=& \Vert  \mathbf{P} [ \mathbf{R} \nabla^2 f_{h,k} \mathbf{P}]^{-1} \mathbf{R} \nabla f_{h,k}\Vert  
\\
&\leq & \Vert  \mathbf{P} \Vert  \ \Vert [ \mathbf{R} \nabla^2 f_{h,k} \mathbf{P}]^{-1}\Vert  \ \Vert \mathbf{R}\nabla f_{h,k}\Vert 
\\
&\leq & \frac{\omega \xi^2}{\mu_h} \Vert \mathbf{R} \nabla f_{h,k}\Vert .
\end{eqnarray*}
Thus,
\[
\Vert  \mathbf{R} \nabla f_{h,k+1}\Vert  \leq \frac{\omega^3 \xi^4 M_h}{2 \mu_h^2} \Vert \mathbf{R} \nabla f_{h,k}\Vert^2,
\]
as required.
\end{proof}
The above theorem states the quadratic convergence of $\Vert \nabla f_{h,k} \Vert$ within the subspace $\text{range}(\mathbf{R}) $. However, it does not give insight in the convergence behaviour on the full space $\mathbb{R}^N$. To address this, we study the composite rate of convergence in the next section.

\subsection{Composite Convergence Rate}\label{sec:NeMOCompostRateIntro}
At the end of this section, we study the convergence properties of the coarse correction step when the incumbent is sufficiently close to the solution. In particular, we deploy the idea of composite convergence rate in \cite{Erdogdu2015}, and consider the convergence of the coarse correction step as a combination of linear and quadratic convergence.

The reason of proving composite convergence is due to the broadness of NeMO. Suppose that $\mathbf{P}=\mathbf{R}=\mathbf{I}$, then the coarse correction step in NeMO becomes Newton's method. In such case we expect quadratic convergence when the incumbent is sufficiently close to the solution. On the other hand, suppose $\mathbf{P}$ is any column of $\mathbf{I}$ and $\mathbf{R} = \mathbf{P}^T$, then the coarse correction step is a (weighted) coordinate descent direction. One should not expect more than linear convergence in that case. Therefore, both quadratic convergence and linear convergence are not suitable for NeMO, and one needs the combination of them. In this paper, we propose to use a composite convergence, and show that it can better explain the convergence of NeMO.

We would like to emphasize the difference between our setting compared to \cite{Erdogdu2015}. To the best of our knowledge, composite convergence rate was used in \cite{Erdogdu2015} to study subsampled Newton methods for machine learning problems without dimensionality reduction. In this paper, the class of problems that we consider is not restricted to machine learning, and we focus on the Newton-type multilevel model, which is a reduced dimension model. The results presented in this section are not direct results of the approach in \cite{Erdogdu2015}. In particular, if the exact analysis of \cite{Erdogdu2015} is taken, the derived composite rate would not be useful in our setting, because the coefficient of the linear component would be greater than $1$.

\begin{theorem}\label{thm: composConv}
Suppose the coarse correction step $\hat{\mathbf{d}}_{h,k}$ in Algorithm \ref{alg: MLGM} is taken and $\alpha_{h,k}=1$, then
\begin{multline}
\Vert \mathbf{x}_{h,k+1} - \mathbf{x}_{h,\star} \Vert \leq \Vert \mathbf{I} - \mathbf{P} [\nabla_H^2 f_{h,k}]^{-1} \mathbf{R} \nabla^2 f_{h,k} \Vert  \Vert (\mathbf{I} - \mathbf{PR})(\mathbf{x}_{h,k} - \mathbf{x}_{h,\star}) \Vert 
\\
+  \frac{M_h \omega^2 \xi^2}{2\mu_h} \Vert \mathbf{x}_{h,k} - \mathbf{x}_{h,\star} \Vert^2 ,
\end{multline}
where $M_h$ and $\mu_h$ are defined in Assumption \ref{assm:NeMOLipAssm}, $\omega = \max \{ \Vert \mathbf{P} \Vert , \Vert \mathbf{R} \Vert \}$ and $\xi = \Vert \mathbf{P}^+ \Vert$. The operator $\nabla^2_H$ is defined in (\ref{eq:NeMOG-fH}).
\end{theorem}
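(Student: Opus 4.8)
The plan is to mimic the classical error-recursion analysis of Newton's method, adapted to the reduced-dimension operator. Writing $\mathbf{e}_k \triangleq \mathbf{x}_{h,k} - \mathbf{x}_{h,\star}$ and using $\alpha_{h,k}=1$ together with the coarse correction step (\ref{eq:NeMOStpDef}), the update becomes
\[
\mathbf{e}_{k+1} = \mathbf{e}_k - \mathbf{P} [\nabla_H^2 f_{h,k}]^{-1} \mathbf{R} \nabla f_{h,k}.
\]
Since $\mathbf{x}_{h,\star}$ solves (\ref{eq: fine model}) we have $\nabla f_h(\mathbf{x}_{h,\star}) = 0$, so I would express $\nabla f_{h,k}$ through the integral form $\nabla f_{h,k} = \int_0^1 \nabla^2 f_h(\mathbf{x}_{h,\star} + t\mathbf{e}_k)\, \mathbf{e}_k \, dt$. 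Adding and subtracting the ``frozen-Hessian'' term $\mathbf{P}[\nabla_H^2 f_{h,k}]^{-1}\mathbf{R}\nabla^2 f_{h,k}\mathbf{e}_k$ then splits the recursion into a linear part $(\mathbf{I} - \mathbf{P}[\nabla_H^2 f_{h,k}]^{-1}\mathbf{R}\nabla^2 f_{h,k})\mathbf{e}_k$ and a remainder driven by $\mathbf{R}(\nabla^2 f_{h,k}\mathbf{e}_k - \nabla f_{h,k})$.

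The crucial algebraic observation, which I would establish first, is the idempotent-type identity $\mathbf{P}[\nabla_H^2 f_{h,k}]^{-1}\mathbf{R}\nabla^2 f_{h,k}\,\mathbf{P} = \mathbf{P}$, which follows immediately because the factor $\mathbf{R}\nabla^2 f_{h,k}\mathbf{P} = \nabla_H^2 f_{h,k}$ cancels its own inverse (recall (\ref{eq:NeMOG-fH})). Denoting the linear operator by $\mathbf{T}_k \triangleq \mathbf{P}[\nabla_H^2 f_{h,k}]^{-1}\mathbf{R}\nabla^2 f_{h,k}$, this gives $(\mathbf{I}-\mathbf{T}_k)\mathbf{P}=0$ and hence $(\mathbf{I}-\mathbf{T}_k)\mathbf{P}\mathbf{R}=0$. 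Consequently $(\mathbf{I}-\mathbf{T}_k)\mathbf{e}_k = (\mathbf{I}-\mathbf{T}_k)(\mathbf{I}-\mathbf{P}\mathbf{R})\mathbf{e}_k$, and bounding by the operator norm produces exactly the first summand $\Vert \mathbf{I}-\mathbf{T}_k\Vert\,\Vert(\mathbf{I}-\mathbf{P}\mathbf{R})\mathbf{e}_k\Vert$ of the statement. This insertion of the projector $\mathbf{I}-\mathbf{P}\mathbf{R}$ is the one non-routine idea, and I expect it to be the main obstacle; without it the naive bound $\Vert\mathbf{I}-\mathbf{T}_k\Vert\,\Vert\mathbf{e}_k\Vert$ would yield a useless linear coefficient, which is precisely the difficulty flagged in the comparison with \cite{Erdogdu2015}.

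For the remainder term I would write $\mathbf{R}(\nabla^2 f_{h,k}\mathbf{e}_k - \nabla f_{h,k}) = \mathbf{R}\int_0^1 (\nabla^2 f_{h,k} - \nabla^2 f_h(\mathbf{x}_{h,\star}+t\mathbf{e}_k))\mathbf{e}_k\,dt$ and apply the Hessian Lipschitz bound (\ref{eq:MhDef}), which gives $\Vert\nabla^2 f_{h,k} - \nabla^2 f_h(\mathbf{x}_{h,\star}+t\mathbf{e}_k)\Vert \le M_h(1-t)\Vert\mathbf{e}_k\Vert$, so that integrating $(1-t)$ over $[0,1]$ contributes the factor $M_h/2$. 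Combining this with $\Vert\mathbf{P}\Vert,\Vert\mathbf{R}\Vert\le\omega$ and the lower eigenvalue bound of Proposition \ref{prop:coarseHbasic} (hence $\Vert[\nabla_H^2 f_{h,k}]^{-1}\Vert\le\xi^2/\mu_h$) yields the quadratic summand $\tfrac{M_h\omega^2\xi^2}{2\mu_h}\Vert\mathbf{e}_k\Vert^2$. A final triangle inequality applied to the linear and remainder pieces of $\mathbf{e}_{k+1}$ then assembles the claimed bound.
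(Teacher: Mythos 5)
Your proposal is correct and follows essentially the same route as the paper's own proof: the same error recursion with the integral-mean Hessian $\tilde{\mathbf{Q}}$, the same split into a frozen-Hessian linear part and a Lipschitz-controlled remainder, and the same bounding of $\Vert \mathbf{P}[\nabla_H^2 f_{h,k}]^{-1}\mathbf{R}\Vert$ via Proposition \ref{prop:coarseHbasic}. The only (welcome) difference is presentational: you state the identity $(\mathbf{I}-\mathbf{T}_k)\mathbf{P}\mathbf{R}=0$ explicitly to justify inserting $\mathbf{I}-\mathbf{P}\mathbf{R}$, a step the paper performs implicitly in its chain of equalities.
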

\begin{proof}
Denote 
\[
\tilde{\mathbf{Q}} = \int_0^1 \nabla^2 f(\mathbf{x}_{h,\star} - t (\mathbf{x}_{h,k} - \mathbf{x}_{h,\star})) \text{d}t ,
\]
we have
\begin{eqnarray*}
\mathbf{x}_{h,k+1} - \mathbf{x}_{h,\star} &=& \mathbf{x}_{h,k} -\mathbf{x}_{h,\star} - \mathbf{P} [\nabla_H^2 f_{h,k}]^{-1} \mathbf{R} \nabla f_{h,k} ,
\\
& = & \mathbf{x}_{h,k} -\mathbf{x}_{h,\star}  - \mathbf{P} [\nabla_H^2 f_{h,k}]^{-1} \mathbf{R} \tilde{\mathbf{Q}} (\mathbf{x}_{h,k} - \mathbf{x}_{h,\star})  ,
\\
 &=&  \left( \mathbf{I} - \mathbf{P} [\nabla_H^2 f_{h,k}]^{-1} \mathbf{R}  \tilde{\mathbf{Q}}  \right) (\mathbf{x}_{h,k} - \mathbf{x}_{h,\star}),
\\
&=& \left( \mathbf{I} - \mathbf{P} [\nabla_H^2 f_{h,k}]^{-1} \mathbf{R} \nabla^2 f_{h,k}  \right) (\mathbf{x}_{h,k} - \mathbf{x}_{h,\star})
\\
& & \: + \left( \mathbf{P} [\nabla_H^2 f_{h,k}]^{-1} \mathbf{R} \nabla^2 f_{h,k} -  \mathbf{P} [\nabla_H^2 f_{h,k}]^{-1} \mathbf{R} \tilde{\mathbf{Q}} \right) (\mathbf{x}_{h,k} - \mathbf{x}_{h,\star}),
\\
&=& \left( \mathbf{I} - \mathbf{P} [\nabla_H^2 f_{h,k}]^{-1} \mathbf{R} \nabla^2 f_{h,k}  \right)(\mathbf{I} - \mathbf{PR})(\mathbf{x}_{h,k} - \mathbf{x}_{h,\star})
\\
& & \: + \mathbf{P} [\nabla_H^2 f_{h,k}]^{-1} \mathbf{R} \left(  \nabla^2 f_{h,k} -   \tilde{\mathbf{Q}} \right) (\mathbf{x}_{h,k} - \mathbf{x}_{h,\star}).
\end{eqnarray*}
Note that
\[
\Vert \nabla^2 f_{h,k} -   \tilde{\mathbf{Q}} \Vert = \Bigg\Vert \nabla^2 f_{h,k} -   \int_0^1 \nabla^2 f(\mathbf{x}_{h,\star} - t (\mathbf{x}_{h,k} - \mathbf{x}_{h,\star})) \text{d}t \Bigg\Vert \leq \frac{M_h}{2} \Vert \mathbf{x}_{h,k} - \mathbf{x}_{h,\star} \Vert .
\]
Therefore,
\begin{eqnarray*}
\Vert \mathbf{x}_{h,k+1} - \mathbf{x}_{h,\star} \Vert &\leq & \Vert \mathbf{I} - \mathbf{P} [\nabla_H^2 f_{h,k}]^{-1} \mathbf{R} \nabla^2 f_{h,k} \Vert  \Vert (\mathbf{I} - \mathbf{PR})(\mathbf{x}_{h,k} - \mathbf{x}_{h,\star}) \Vert 
\\
&& \: + \Vert \mathbf{P} [\nabla_H^2 f_{h,k}]^{-1} \mathbf{R} \Vert \frac{M_h}{2} \Vert \mathbf{x}_{h,k} - \mathbf{x}_{h,\star} \Vert^2 ,
\\
& \leq & \Vert \mathbf{I} - \mathbf{P} [\nabla_H^2 f_{h,k}]^{-1} \mathbf{R} \nabla^2 f_{h,k} \Vert  \Vert (\mathbf{I} - \mathbf{PR})(\mathbf{x}_{h,k} - \mathbf{x}_{h,\star}) \Vert 
\\
&& \: +  \frac{M_h \omega^2 \xi^2}{2\mu_h} \Vert \mathbf{x}_{h,k} - \mathbf{x}_{h,\star} \Vert^2 ,
\end{eqnarray*}
as required.
\end{proof}
Theorem \ref{thm: composConv} provides the composite convergence rate for the coarse correction step. However, some terms remain unclear, in particular $\Vert \mathbf{I} - \mathbf{P} [\nabla_H^2 f_{h,k}]^{-1} \mathbf{R} \nabla^2 f_{h,k} \Vert$. Notice that in the case when $\text{rank}(\mathbf{P}) = N$ (i.e. $\mathbf{P}$ is invertible), 
\begin{eqnarray*}
\Vert \mathbf{I} - \mathbf{P} [\nabla_H^2 f_{h,k}]^{-1} \mathbf{R} \nabla^2 f_{h,k} \Vert &=& \Vert \mathbf{I} - \mathbf{P} [\mathbf{R} \nabla^2 f_{h,k} \mathbf{P}]^{-1} \mathbf{R} \nabla^2 f_{h,k} \Vert ,
\\
& = &\Vert \mathbf{I} - \mathbf{P} \mathbf{P}^{-1} [\nabla^2 f_{h,k} ]^{-1} \mathbf{R}^{-1} \mathbf{R} \nabla^2 f_{h,k} \Vert ,
\\
&=& 0.
\end{eqnarray*}
It is intuitive to consider that $\Vert \mathbf{I} - \mathbf{P} [\nabla_H^2 f_{h,k}]^{-1} \mathbf{R} \nabla^2 f_{h,k} \Vert$ should be small and less than $1$ when $\text{rank}(\mathbf{P})$ is close to but not equal to $N$. However, the above intuition is not true, and we prove this in the following lemma.
\begin{lemma} \label{lm: RankBD}
Suppose $\text{rank}(\mathbf{P}) \neq N$, then
\[
1 \leq \Vert \mathbf{I} - \mathbf{P} [\nabla_H^2 f_{h,k}]^{-1} \mathbf{R} \nabla^2 f_{h,k} \Vert \leq \sqrt{\frac{L_h}{\mu_h}},
\]
where $L_h$ and $\mu_h$ are defined in Assumption \ref{assm:NeMOLipAssm}. The operator $\nabla^2_H$ is defined in (\ref{eq:NeMOG-fH}).
\end{lemma}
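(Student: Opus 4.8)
The plan is to recognize that the operator $\mathbf{A} \triangleq \mathbf{P}[\nabla_H^2 f_{h,k}]^{-1}\mathbf{R}\nabla^2 f_{h,k}$ is a projection, and in fact an orthogonal projection with respect to the inner product induced by $\nabla^2 f_{h,k}$. Writing $\mathbf{B} = \nabla^2 f_{h,k}$ for brevity (so $\mu_h \mathbf{I} \preceq \mathbf{B} \preceq L_h \mathbf{I}$ and $\nabla_H^2 f_{h,k} = \mathbf{R}\mathbf{B}\mathbf{P}$), a direct computation using $(\mathbf{R}\mathbf{B}\mathbf{P})[\mathbf{R}\mathbf{B}\mathbf{P}]^{-1} = \mathbf{I}$ gives $\mathbf{A}^2 = \mathbf{A}$. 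Hence $\mathbf{I}-\mathbf{A}$ is also idempotent. Since $\mathbf{P}$ has full column rank by Assumption~\ref{assp: P}, the hypothesis $\text{rank}(\mathbf{P}) = n \neq N$ forces $n < N$; the range of $\mathbf{A}$ is $\text{range}(\mathbf{P})$, which has dimension $n$, so $\mathbf{I}-\mathbf{A}$ is a \emph{nonzero} projection of rank $N-n \geq 1$.

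For the lower bound I would use the elementary fact that any nonzero idempotent operator has operator norm at least $1$. Concretely, pick any nonzero $\mathbf{v} \in \text{range}(\mathbf{I}-\mathbf{A})$; since a projection acts as the identity on its own range, $(\mathbf{I}-\mathbf{A})\mathbf{v} = \mathbf{v}$, and thus $\Vert(\mathbf{I}-\mathbf{A})\mathbf{v}\Vert = \Vert\mathbf{v}\Vert$, which yields $\Vert\mathbf{I}-\mathbf{A}\Vert \geq 1$. This is where the hypothesis $\text{rank}(\mathbf{P})\neq N$ is essential: if $\mathbf{P}$ were invertible, $\mathbf{A}$ would equal $\mathbf{I}$ and the norm would collapse to $0$, as already noted immediately before the lemma.

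For the upper bound the key observation is that, because $\mathbf{R}=\mathbf{P}^T$ (Assumption~\ref{assp: P R} with $c=1$), the operator $\mathbf{A} = \mathbf{P}(\mathbf{P}^T\mathbf{B}\mathbf{P})^{-1}\mathbf{P}^T\mathbf{B}$ is exactly the $\mathbf{B}$-orthogonal projection onto $\text{range}(\mathbf{P})$. I would verify this by checking that $\mathbf{B}\mathbf{A}$ is symmetric, i.e. $\mathbf{A}$ is self-adjoint in the inner product $\langle \mathbf{x},\mathbf{y}\rangle_\mathbf{B} = \mathbf{x}^T\mathbf{B}\mathbf{y}$, which relies on $[\mathbf{R}\mathbf{B}\mathbf{P}]^T = \mathbf{R}\mathbf{B}\mathbf{P}$. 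Consequently $\mathbf{I}-\mathbf{A}$ is also a $\mathbf{B}$-orthogonal projection, so its operator norm in the $\mathbf{B}$-norm $\Vert \mathbf{z}\Vert_\mathbf{B} = \sqrt{\mathbf{z}^T\mathbf{B}\mathbf{z}}$ equals exactly $1$. Finally I would transfer this to the Euclidean operator norm through the equivalence $\sqrt{\mu_h}\Vert\mathbf{z}\Vert \leq \Vert\mathbf{z}\Vert_\mathbf{B} \leq \sqrt{L_h}\Vert\mathbf{z}\Vert$ implied by Assumption~\ref{assm:NeMOLipAssm}: for any $\mathbf{z}$,
\[
\Vert(\mathbf{I}-\mathbf{A})\mathbf{z}\Vert \leq \frac{1}{\sqrt{\mu_h}}\Vert(\mathbf{I}-\mathbf{A})\mathbf{z}\Vert_\mathbf{B} \leq \frac{1}{\sqrt{\mu_h}}\Vert\mathbf{z}\Vert_\mathbf{B} \leq \sqrt{\frac{L_h}{\mu_h}}\Vert\mathbf{z}\Vert,
\]
which gives $\Vert\mathbf{I}-\mathbf{A}\Vert \leq \sqrt{L_h/\mu_h}$.

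The main obstacle is the conceptual step of identifying $\mathbf{A}$ as a $\mathbf{B}$-orthogonal (rather than Euclidean-orthogonal) projection; once this is in place, both bounds follow from standard projection facts together with the spectral-norm equivalence between $\Vert\cdot\Vert$ and $\Vert\cdot\Vert_\mathbf{B}$. The only mild subtlety is that $\mathbf{I}-\mathbf{A}$ is generally \emph{oblique} in the Euclidean geometry, so its Euclidean norm can genuinely exceed $1$; this is precisely why the upper bound is $\sqrt{L_h/\mu_h}\geq 1$ rather than $1$, and it is fully consistent with the lower bound.
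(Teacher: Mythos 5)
Your proof is correct, and it rests on the same structural insight as the paper's — that $\mathbf{A} = \mathbf{P}[\nabla_H^2 f_{h,k}]^{-1}\mathbf{R}\nabla^2 f_{h,k}$ is a projection which is orthogonal in the geometry induced by the Hessian — but it executes this insight by a genuinely different, coordinate-free route. The paper instead takes the eigendecomposition $\nabla^2 f_{h,k} = \mathbf{U}\mathbf{\Sigma}\mathbf{U}^T$ and factorizes $\mathbf{I}-\mathbf{A} = \mathbf{U}\mathbf{\Sigma}^{-1/2}\bigl(\mathbf{I}-\mathbf{\Gamma}_{\mathbf{\Sigma}^{1/2}\mathbf{U}^T\mathbf{P}}\bigr)\mathbf{\Sigma}^{1/2}\mathbf{U}^T$, where $\mathbf{\Gamma}_{\mathbf{\Sigma}^{1/2}\mathbf{U}^T\mathbf{P}}$ is the Euclidean orthogonal projector onto $\text{range}(\mathbf{\Sigma}^{1/2}\mathbf{U}^T\mathbf{P})$; the upper bound then follows from submultiplicativity, $\Vert\mathbf{\Sigma}^{-1/2}\Vert\,\Vert\mathbf{I}-\mathbf{\Gamma}\Vert\,\Vert\mathbf{\Sigma}^{1/2}\Vert\leq\sqrt{L_h/\mu_h}$, and the lower bound from idempotence via $\Vert\mathbf{M}\Vert=\Vert\mathbf{M}^2\Vert\leq\Vert\mathbf{M}\Vert^2$ together with $\mathbf{M}\neq\mathbf{0}$. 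Your $\mathbf{B}$-inner-product formulation is exactly this statement after the change of variables $\mathbf{z}\mapsto\mathbf{\Sigma}^{1/2}\mathbf{U}^T\mathbf{z}$, so the two proofs are equivalent in content; what yours buys is transparency: no spectral decomposition is needed, the self-adjointness check reduces to symmetry of $\mathbf{B}\mathbf{P}(\mathbf{P}^T\mathbf{B}\mathbf{P})^{-1}\mathbf{P}^T\mathbf{B}$, the oblique-in-Euclidean versus orthogonal-in-$\mathbf{B}$ distinction is made explicit, and your lower bound (a nonzero idempotent fixes a nonzero vector, hence has norm at least $1$) is more elementary than the paper's norm-squared inequality. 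What the paper's version buys is that both bounds drop out of a single explicit factorization using only standard facts about Euclidean projectors, without introducing a weighted inner product.
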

\begin{proof}
Since $\nabla^2 f_{h,k}$ is a positive definite matrix, consider the  eigendecomposition of $\nabla^2 f_{h,k}$,
\[
\nabla^2 f_{h,k} = \mathbf{U} \mathbf{\Sigma} \mathbf{U}^T,
\]
where $\mathbf{\Sigma}$ is a diagonal matrix containing the eigenvalues of $\nabla^2 f_{h,k}$, and $\mathbf{U}$ is a orthogonal matrix where its columns are eigenvectors of $\nabla^2 f_{h,k}$. We then have
\begin{eqnarray*}
& &  \mathbf{I} - \mathbf{P} [\nabla_H^2 f_{h,k}]^{-1} \mathbf{R} \nabla^2 f_{h,k} 
\\
&=& \mathbf{I} - \mathbf{P} [\mathbf{R} \nabla^2 f_{h,k} \mathbf{P}]^{-1} \mathbf{R} \nabla^2 f_{h,k}  ,
\\
&=& \mathbf{U} \mathbf{\Sigma}^{-1/2} \mathbf{\Sigma}^{1/2} \mathbf{U}^T -  \mathbf{U} \mathbf{\Sigma}^{-1/2} \mathbf{\Sigma}^{1/2} \mathbf{U}^T \mathbf{P} [\mathbf{R} \mathbf{U} \mathbf{\Sigma}^{1/2} \mathbf{\Sigma}^{1/2} \mathbf{U}^T \mathbf{P}]^{-1}   \mathbf{R} \mathbf{U} \mathbf{\Sigma}^{1/2} \mathbf{\Sigma}^{1/2} \mathbf{U}^T  ,
\\
&=&  \mathbf{U} \mathbf{\Sigma}^{-1/2} \mathbf{\Sigma}^{1/2} \mathbf{U}^T 
\\
&& \: -  \mathbf{U} \mathbf{\Sigma}^{-1/2} (\mathbf{\Sigma}^{1/2} \mathbf{U}^T \mathbf{P}) [ (\mathbf{\Sigma}^{1/2} \mathbf{U}^T \mathbf{P})^T (\mathbf{\Sigma}^{1/2} \mathbf{U}^T \mathbf{P})]^{-1}   (\mathbf{\Sigma}^{1/2} \mathbf{U}^T \mathbf{P})^T \mathbf{\Sigma}^{1/2} \mathbf{U}^T  ,
\\
&=& \mathbf{U} \mathbf{\Sigma}^{-1/2} (\mathbf{I} - \mathbf{\Gamma}_{\mathbf{\Sigma}^{1/2} \mathbf{U}^T \mathbf{P}} ) \mathbf{\Sigma}^{1/2} \mathbf{U}^T  ,
\end{eqnarray*}
where $\mathbf{\Gamma}_{\mathbf{\Sigma}^{1/2} \mathbf{U}^T \mathbf{P}}$ is the orthogonal projection operator onto the range of $\mathbf{\Sigma}^{1/2} \mathbf{U}^T \mathbf{P}$, and so
\begin{eqnarray*}
\Vert \mathbf{I} - \mathbf{P} [\nabla_H^2 f_{h,k}]^{-1} \mathbf{R} \nabla^2 f_{h,k} \Vert &=& \Vert \mathbf{U} \mathbf{\Sigma}^{-1/2} (\mathbf{I} - \mathbf{\Gamma}_{\mathbf{\Sigma}^{1/2} \mathbf{U}^T \mathbf{P}} ) \mathbf{\Sigma}^{1/2} \mathbf{U}^T  \Vert ,
\\
&=& \Vert  \mathbf{\Sigma}^{-1/2} (\mathbf{I} - \mathbf{\Gamma}_{\mathbf{\Sigma}^{1/2} \mathbf{U}^T \mathbf{P}} ) \mathbf{\Sigma}^{1/2} \Vert .
\end{eqnarray*}
For the upper bound, we have
\[
\Vert  \mathbf{\Sigma}^{-1/2} (\mathbf{I} - \mathbf{\Gamma}_{\mathbf{\Sigma}^{1/2} \mathbf{U}^T \mathbf{P}} ) \mathbf{\Sigma}^{1/2} \Vert \leq \Vert  \mathbf{\Sigma}^{-1/2}  \Vert \Vert  (\mathbf{I} - \mathbf{\Gamma}_{\mathbf{\Sigma}^{1/2} \mathbf{U}^T \mathbf{P}} ) \Vert \Vert  \mathbf{\Sigma}^{1/2} \Vert \leq \sqrt{\frac{L_h}{\mu_h}},
\]
since $\mathbf{I} - \mathbf{\Gamma}_{\mathbf{\Sigma}^{1/2} \mathbf{U}^T \mathbf{P}}$ is an orthogonal projector and $\Vert  (\mathbf{I} - \mathbf{\Gamma}_{\mathbf{\Sigma}^{1/2} \mathbf{U}^T \mathbf{P}} ) \Vert \leq 1$. For the lower bound, we have
\begin{eqnarray*}
 \Vert  \mathbf{\Sigma}^{-1/2} (\mathbf{I} - \mathbf{\Gamma}_{\mathbf{\Sigma}^{1/2} \mathbf{U}^T \mathbf{P}} ) \mathbf{\Sigma}^{1/2} \Vert 
&=& \Vert  \mathbf{\Sigma}^{-1/2} (\mathbf{I} - \mathbf{\Gamma}_{\mathbf{\Sigma}^{1/2} \mathbf{U}^T \mathbf{P}} )(\mathbf{I} - \mathbf{\Gamma}_{\mathbf{\Sigma}^{1/2} \mathbf{U}^T \mathbf{P}} ) \mathbf{\Sigma}^{1/2} \Vert ,
\\
&=& \Vert  \mathbf{\Sigma}^{-1/2} (\mathbf{I} - \mathbf{\Gamma}_{\mathbf{\Sigma}^{1/2} \mathbf{U}^T \mathbf{P}} )\mathbf{\Sigma}^{1/2}\mathbf{\Sigma}^{-1/2}(\mathbf{I} - \mathbf{\Gamma}_{\mathbf{\Sigma}^{1/2} \mathbf{U}^T \mathbf{P}} ) \mathbf{\Sigma}^{1/2} \Vert ,
\\
&\leq & \Vert  \mathbf{\Sigma}^{-1/2} (\mathbf{I} - \mathbf{\Gamma}_{\mathbf{\Sigma}^{1/2} \mathbf{U}^T \mathbf{P}} )\mathbf{\Sigma}^{1/2}\Vert  \Vert \mathbf{\Sigma}^{-1/2}(\mathbf{I} - \mathbf{\Gamma}_{\mathbf{\Sigma}^{1/2} \mathbf{U}^T \mathbf{P}} ) \mathbf{\Sigma}^{1/2} \Vert ,
\\
& = & \Vert  \mathbf{\Sigma}^{-1/2} (\mathbf{I} - \mathbf{\Gamma}_{\mathbf{\Sigma}^{1/2} \mathbf{U}^T \mathbf{P}} )\mathbf{\Sigma}^{1/2}\Vert^2 .
\end{eqnarray*}
The assumption $\text{rank}(\mathbf{P}) \neq N$ implies 
\[
\mathbf{I} \neq \mathbf{\Gamma}_{\mathbf{\Sigma}^{1/2} \mathbf{U}^T \mathbf{P}} \quad\text{and}\quad \Vert \mathbf{\Sigma}^{-1/2}(\mathbf{I} - \mathbf{\Gamma}_{\mathbf{\Sigma}^{1/2} \mathbf{U}^T \mathbf{P}} ) \mathbf{\Sigma}^{1/2} \Vert \neq 0.
\]
Therefore,
$
1 \leq \Vert \mathbf{\Sigma}^{-1/2}(\mathbf{I} - \mathbf{\Gamma}_{\mathbf{\Sigma}^{1/2} \mathbf{U}^T \mathbf{P}} ) \mathbf{\Sigma}^{1/2} \Vert ,
$
as required.
\end{proof}
Lemma \ref{lm: RankBD} clarifies the fact that the term $\Vert \mathbf{I} - \mathbf{P} [\nabla_H^2 f_{h,k}]^{-1} \mathbf{R} \nabla^2 f_{h,k} \Vert$ is at least $1$ when $n < N$. This fact reduces the usefulness of the composite convergence rate in Theorem \ref{thm: composConv}. In Section \ref{sec:PDE}, we will investigate {the term} $\Vert(\mathbf{I} - \mathbf{PR})(\mathbf{x}_{h,k} - \mathbf{x}_{h,\star}) \Vert$ and show that it is sufficiently small in a specific case.

\section{PDE-based Problems: One-dimensional Case}\label{sec:PDE}

In this section, we study the Newton-type multilevel model that arises from PDE-based problems. We begin with introducing the basic setting, and then we analyze the coarse correction step in this specific case. Building upon the composite rate in Section \ref{sec:NeMOCompostRateIntro}, at the end of this section we re-derive the composite rate with a more insightful bound of $\Vert(\mathbf{I} - \mathbf{PR})(\mathbf{x}_{h,k} - \mathbf{x}_{h,\star})\Vert$. As mentioned in Section \ref{sec:NeMOconvAnalysis}, this quantity is critical in analyzing the performance and complexity of NeMO.

For the simplicity of the analysis, we consider specifically the one-dimensional case, i.e. the decision variable of the infinite dimensional problems is a functional in $\mathbb{R}$. We further assume that the decision variable is discretized uniformly over $[0,1]$ with value $0$ on the boundary. We would like to clarify that the approach of analysis in this section could be applied to more general and high-dimensional settings.

\subsection{Newton-type Multilevel Model by One-dimensional Interpolations}

\begin{figure}[t]
\centering
\begin{tikzpicture}[thick,scale=0.88, every node/.style={scale=0.88}]
\draw [thick] plot[smooth, tension=.7] coordinates {(-10,-0) (0,-0)};

\draw      [thick, fill = black] (-10,-0) ellipse (0.2 and 0.2) ;
\draw      [thick] (-8.75,0.925) ellipse (0.2 and 0.2);
\draw      [thick, fill = black] (-7.5,1.85) ellipse (0.2 and 0.2);
\draw      [thick] (-6.25,0.925) ellipse (0.2 and 0.2);
\draw      [thick, fill = black] (-5,-0) ellipse (0.2 and 0.2);
\draw      [thick] (-3.75,-0.925) ellipse (0.2 and 0.2);
\draw      [thick, fill = black] (-2.5,-1.85) ellipse (0.2 and 0.2);
\draw      [thick] (-1.25,-0.925) ellipse (0.2 and 0.2);
\draw      [thick, fill = black]  (-0,-0) ellipse (0.2 and 0.2);

\node (v1) at (-10,-0)  {};
\node (v3) at(-7.5,1.85) {};
\node (v5) at (-5,-0) {};
\node (v7) at (-2.5,-1.85) {};
\node (v9) at  (-0,-0) {};

\draw [densely dashed] (v1) -- (v3)-- (v5)-- (v7)-- (v9) ;
\end{tikzpicture}
\caption{$\mathbf{P}$ in (\ref{eq: PDEP})} \label{fig:PDEp}
\end{figure}
\begin{figure}[t]
\centering
\begin{tikzpicture}[thick,scale=0.88, every node/.style={scale=0.88}]
\draw [thick] plot[smooth, tension=.7] coordinates {(-10,-0) (0,-0)};

\draw        [thick, fill = black] (-10,-0) ellipse (0.2 and 0.2) ;
\draw        [thick, fill = black] (-8.75,1.5) ellipse (0.2 and 0.2);
\draw        [thick, fill = black] (-7.5,1.85) ellipse (0.2 and 0.2);
\draw        [thick, fill = black] (-6.25,1.5) ellipse (0.2 and 0.2);
\draw        [thick, fill = black] (-5,-0) ellipse (0.2 and 0.2);
\draw        [thick, fill = black] (-3.75,-1.5) ellipse (0.2 and 0.2);
\draw        [thick, fill = black] (-2.5,-1.85) ellipse (0.2 and 0.2);
\draw        [thick, fill = black] (-1.25,-1.5) ellipse (0.2 and 0.2);
\draw        [thick, fill = black]  (-0,-0) ellipse (0.2 and 0.2);

\draw       [thick] (-7.5,1.7) ellipse (2.3 and 0.6);
\draw       [thick,rotate around={-50:(-5,-0)}] (-5,-0) ellipse (2.5 and 0.7);
\draw       [thick] (-2.5,-1.7) ellipse (2.3 and 0.6);

\node (v1) at (-10,-0)  {};
\node (v2) at (-8.75,1.5) {};
\node (v3) at(-7.5,1.85) {};
\node (v4) at (-6.25,1.5) {};
\node (v5) at (-5,-0) {};
\node (v6) at (-3.75,-1.5) {};
\node (v7) at (-2.5,-1.85) {};
\node (v8) at (-1.25,-1.5) {};
\node (v9) at  (-0,-0) {};

\draw   [densely dashed] (v1)-- (v2)-- (v3)-- (v4)-- (v5)-- (v6)-- (v7)-- (v8)-- (v9);
\end{tikzpicture}
\caption{$\mathbf{R}$ in (\ref{eq: PDER})} \label{fig:PDEr}
\end{figure}

For one dimensional problems, we consider the standard linear prolongation operator and restriction operator. Based on the traditional setting in multigrid research, we define the following Newton-type multilevel model.
\begin{itemize}
	\item $N$ is an even number,
	\item the (fine) discretized decision variable is in $\mathbf{R}^{N-1}$, and
	\item the coarse model is in $\mathbf{R}^{N/2 -1}$.
\end{itemize}
For interpolation operator $\mathbf{P}\in \mathbb{R}^{(N-1) \times (N/2 -1)}$, we consider 
\begin{equation}\label{eq: PDEP}
\mathbf{P} 
= \frac{1}{2}
\begin{pmatrix}
  1 &   &        &    \\
  2 &   &        &    \\
  1 & 1 &        &    \\
    & 2 &        &    \\
    & 1 &        &    \\
    &   & \ddots &  1 \\
    &   &        &  2 \\
    &   &        &  1 \\   
\end{pmatrix},
\end{equation}
and the restriction operator 
\begin{equation}\label{eq: PDER}
\mathbf{R} = \dfrac{1}{2} \mathbf{P}^T \in \mathbb{R}^{(N/2 -1) \times (N-1)  } .
\end{equation}
Notice that the $\mathbf{P}$ and $\mathbf{R}$ in (\ref{eq: PDEP}) and (\ref{eq: PDER})  have geometric meanings, and they are one of the standard pairs of operators in multilevel and multigrid methods \cite{Briggs2000}. As shown in Figure \ref{fig:PDEp}, $\mathbf{P}$ is an interpolation operator such that one point is interpolated linearly between every two points. On the other hand, from Figure \ref{fig:PDEr}, $\mathbf{R}$ performs restriction by doing weighted average onto every three points. These two operators assume the boundary condition is zero for both end points. We emphasize that the approach of convergence analysis in this section is not restricted for this specific pair of $\mathbf{P}$ and $\mathbf{R}$. We believe the general approach could be applied to interpolation type operators, especially operators that are designed for PDE-based optimization problems.

\subsection{Analysis}
With the definitions (\ref{eq: PDEP}) and (\ref{eq: PDER}), we investigate the convergence behaviour of the coarse correction step. The analytical tool we used in this section is Taylor's expansion. To deploy this technique, we consider interpolations over the elements of vectors. In particular, we consider interpolations that are twice differentiable with the following definition.
\begin{definition}\label{def:NeMOPDEInterFunClass}
For any vector $\mathbf{r} \in \mathbb{R}^{N-1}$, we denote $\mathcal{F}^{N-1}_\mathbf{r}$ to be the set of twice differentiable functions such that $\forall w \in \mathcal{F}^{N-1}_\mathbf{r}$,
\[
w(0) = w(1) = 0, \quad and \quad w_i = w(y_i) = (\mathbf{r})_i ,
\]
where $y_i = i/N$ for $i = 1,2,\dots, N-1$.
\end{definition}

Using the definitions (\ref{eq: PDEP}) and (\ref{eq: PDER}), we can estimate the ``information loss'' via interpolations using the following proposition.
\begin{proposition}\label{prop:pdePRvDef}
Suppose $\mathbf{P}$ and $\mathbf{R}$ are defined in (\ref{eq: PDEP}) and (\ref{eq: PDER}), respectively. For any vector $\mathbf{r}_h \in \mathbb{R}^{N-1}$, we denote $(\mathbf{r}_{h})_{0}=(\mathbf{r}_{h})_{N}=0$ and obtain
\begin{eqnarray*}
(\mathbf{P} \mathbf{R} \mathbf{r}_{h})_j = 
\begin{cases}
\frac{1}{4} ( (\mathbf{r}_{h})_{j-1}+2 (\mathbf{r}_{h})_{j}+ (\mathbf{r}_{h})_{j+1}) \quad \text{if $j$ is even,} 
\\
\frac{1}{8} ( (\mathbf{r}_{h})_{j-2}+2(\mathbf{r}_{h})_{j-1}+2(\mathbf{r}_{h})_{j}+2(\mathbf{r}_{h})_{j+1} + (\mathbf{r}_{h})_{j+2}) \quad \text{if $j$ is odd},
\end{cases}
\end{eqnarray*}
for $j=1,2,\dots,N-1$.
\end{proposition}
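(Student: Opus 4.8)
The plan is to establish the identity by a direct computation that exploits the banded sparsity of $\mathbf{P}$. First I would read off the nonzero pattern of the prolongation operator from (\ref{eq: PDEP}): each column $i$ carries the stencil $\tfrac12,1,\tfrac12$, i.e.\ $(\mathbf{P})_{2i-1,i}=(\mathbf{P})_{2i+1,i}=\tfrac12$ and $(\mathbf{P})_{2i,i}=1$, with all other entries zero. Because $\mathbf{R}=\tfrac12\mathbf{P}^{T}$ by (\ref{eq: PDER}), the restriction operator then has entries $(\mathbf{R})_{i,2i-1}=(\mathbf{R})_{i,2i+1}=\tfrac14$ and $(\mathbf{R})_{i,2i}=\tfrac12$.

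Next I would introduce the intermediate coarse vector $\mathbf{s}\triangleq\mathbf{R}\mathbf{r}_h$ and compute it componentwise, obtaining the three-point weighted average $s_i=\tfrac14\big((\mathbf{r}_h)_{2i-1}+2(\mathbf{r}_h)_{2i}+(\mathbf{r}_h)_{2i+1}\big)$, which is precisely the restriction illustrated in Figure~\ref{fig:PDEr}. The identity then follows by evaluating $(\mathbf{P}\mathbf{s})_j$ and splitting on the parity of $j$, since row $j$ of $\mathbf{P}$ receives contributions only from the columns whose stencil reaches row $j$. When $j=2m$ is even, the single contributing column is $i=m$ through its central entry $1$, so $(\mathbf{P}\mathbf{s})_j=s_m$; substituting $s_m$ and relabelling $2m-1,2m,2m+1$ as $j-1,j,j+1$ gives the even branch. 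When $j=2m+1$ is odd, row $j$ is reached by columns $i=m$ and $i=m+1$, each through an outer entry $\tfrac12$, so $(\mathbf{P}\mathbf{s})_j=\tfrac12 s_m+\tfrac12 s_{m+1}$; expanding both terms and merging the overlapping $(\mathbf{r}_h)_{2m+1}$ contribution yields the five-point stencil of the odd branch.

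The step I expect to demand the most care is the bookkeeping at the two end rows $j=1$ and $j=N-1$, where one of the neighbouring coarse indices ($i=0$, respectively $i=N/2$) does not exist and some of the referenced fine indices fall outside $\{1,\dots,N-1\}$. Here I would invoke the stated convention $(\mathbf{r}_h)_0=(\mathbf{r}_h)_N=0$, extended so that every component with index outside $\{0,1,\dots,N\}$ is read as zero, so that the absent column contributes nothing and the out-of-range terms in the stencils drop out. Verifying that the two boundary rows indeed reduce to the stated formulas under this convention is the one genuinely non-mechanical part; the interior case, by contrast, is a routine substitution and collection of terms once the entry patterns of $\mathbf{P}$ and $\mathbf{R}$ have been recorded.
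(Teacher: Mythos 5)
Your overall route — record the stencil entries of $\mathbf{P}$ and $\mathbf{R}$ from (\ref{eq: PDEP}) and (\ref{eq: PDER}), form the intermediate vector $\mathbf{s}=\mathbf{R}\mathbf{r}_h$ componentwise, then evaluate $(\mathbf{P}\mathbf{s})_j$ by splitting on the parity of $j$ — is exactly the paper's own proof, and your interior computation is correct: for even $j=2m$ the only contribution is $s_m$, and for odd $j=2m+1$ in the interior the row collects $\tfrac12 s_m+\tfrac12 s_{m+1}$, which expands to the stated three- and five-point formulas.

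However, the step you yourself single out as the delicate one is resolved incorrectly, and it is precisely the step that fails if you carry out the verification you deferred. At $j=1$ (odd), row $1$ of $\mathbf{P}$ is reached only by column $i=1$, so the true value is
\[
(\mathbf{P}\mathbf{R}\mathbf{r}_h)_1=\tfrac12 s_1=\tfrac18\bigl((\mathbf{r}_h)_1+2(\mathbf{r}_h)_2+(\mathbf{r}_h)_3\bigr),
\]
whereas the five-point formula read with your zero-fill convention $(\mathbf{r}_h)_{-1}=(\mathbf{r}_h)_0=0$ gives
\[
\tfrac18\bigl(2(\mathbf{r}_h)_1+2(\mathbf{r}_h)_2+(\mathbf{r}_h)_3\bigr),
\]
which overcounts by $\tfrac18(\mathbf{r}_h)_1$; the same mismatch occurs at $j=N-1$ in the term $(\mathbf{r}_h)_{N-1}$. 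The source of the error is a conflation of two different conventions: ``the absent coarse column contributes nothing'' describes the actual matrix product (effectively $s_0=0$), while ``out-of-range fine entries are zero'' describes how you propose to read the displayed stencil — and these two readings disagree, because extending the restriction stencil to a phantom coarse point $i=0$ with zero-filled data yields $s_0=\tfrac14(\mathbf{r}_h)_1\neq 0$. The convention that actually reconciles the boundary rows with the displayed five-point formula is odd reflection, $(\mathbf{r}_h)_{-1}=-(\mathbf{r}_h)_1$ and $(\mathbf{r}_h)_{N+1}=-(\mathbf{r}_h)_{N-1}$; alternatively, state the two boundary rows separately as $\tfrac18((\mathbf{r}_h)_1+2(\mathbf{r}_h)_2+(\mathbf{r}_h)_3)$ and $\tfrac18((\mathbf{r}_h)_{N-3}+2(\mathbf{r}_h)_{N-2}+(\mathbf{r}_h)_{N-1})$ and claim the five-point formula only for odd $j$ with $3\le j\le N-3$. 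To be fair, the proposition as printed already references the undefined entries $(\mathbf{r}_h)_{-1}$ and $(\mathbf{r}_h)_{N+1}$ when $j\in\{1,N-1\}$, and the paper's proof silently performs only the interior computation; your instinct that the boundary rows are the genuinely non-mechanical part was right — it is only your proposed fix that does not work.
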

\begin{proof}
By the definition of $\mathbf{R}$ and $\mathbf{P}$, we have
\begin{equation*}
(\mathbf{R} \mathbf{r}_h)_{j} = \frac{1}{4} ( (\mathbf{r}_{h})_{2j-1}+2(\mathbf{r}_{h})_{2j}+ (\mathbf{r}_{h})_{2j+1}) , \quad 1 \leq j \leq \frac{n}{2} -1.
\end{equation*}
So
\begin{equation*}
(\mathbf{P} \mathbf{R} \mathbf{r}_h)_{j} = (\mathbf{R} \mathbf{r}_h)_{j/2} = \frac{1}{4} ( (\mathbf{r}_{h})_{j-1}+2 (\mathbf{r}_{h})_{j}+ (\mathbf{r}_{h})_{j+1}) \quad \text{if $j$ is even,} 
\end{equation*}
and
\begin{eqnarray*}
(\mathbf{P} \mathbf{R} \mathbf{r}_h)_j &=& \frac{1}{2} \left( (\mathbf{R} \mathbf{r}_h)_{(j-1)/2} + (\mathbf{R} \mathbf{r}_h)_{(j+1)/2} \right) ,
\\
&=& \frac{1}{8} ( (\mathbf{r}_{h})_{j-2}+2(\mathbf{r}_{h})_{j-1}+2(\mathbf{r}_{h})_{j}+2(\mathbf{r}_{h})_{j+1} + (\mathbf{r}_{h})_{j+2}) \quad \text{if $j$ is odd}.
\end{eqnarray*}
So we obtain the desired result.
\end{proof}
Using the above proposition and Taylor's expansion, we obtain the following lemma.
\begin{lemma}\label{lm: PDEerrBD1}
Suppose $\mathbf{P}$ and $\mathbf{R}$ are defined in (\ref{eq: PDEP}) and (\ref{eq: PDER}), respectively. For any vector $\mathbf{r}_h \in \mathbb{R}^{N-1}$,
\[
\Vert (\mathbf{I} - \mathbf{P} \mathbf{R} )\mathbf{r}_{h} \Vert_{\infty} \leq \min_{w \in \mathcal{F}^{N-1}_{\mathbf{r}_h}} \max_{y \in [0,1]}\vert w''(y) \vert \frac{3}{4N^2} .
\]
Note that the definition of $\mathcal{F}^{N-1}_{\mathbf{r}_h}$ follows from Definition \ref{def:NeMOPDEInterFunClass}.
\end{lemma}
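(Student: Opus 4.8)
The plan is to introduce an arbitrary interpolant $w \in \mathcal{F}^{N-1}_{\mathbf{r}_h}$ and to recognise every entry of $(\mathbf{I} - \mathbf{P}\mathbf{R})\mathbf{r}_h$ as a small linear combination of \emph{central second differences} of $w$ on the fine mesh of width $\delta = 1/N$. Writing $w_i = w(i/N) = (\mathbf{r}_h)_i$ with the convention $w_0 = w_N = 0$ (consistent with the boundary condition $w(0)=w(1)=0$ built into Definition \ref{def:NeMOPDEInterFunClass}), I would set $D_i \triangleq w_{i-1} - 2w_i + w_{i+1}$. The single analytic ingredient is a Taylor bound: since $w$ is twice differentiable, the Lagrange remainder gives $w(y \pm \delta) = w(y) \pm \delta w'(y) + \tfrac{\delta^2}{2} w''(\xi_{\pm})$, so that $|D_i| \le \delta^2 \max_{y \in [0,1]} |w''(y)| = \tfrac{1}{N^2}\max_{y\in[0,1]}|w''(y)|$. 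Everything else is bookkeeping of coefficients.

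Next, I would expand $(\mathbf{I}-\mathbf{P}\mathbf{R})\mathbf{r}_h$ entry by entry via Proposition \ref{prop:pdePRvDef} and match each entry against the $D_i$. For even $j$ one obtains $((\mathbf{I}-\mathbf{P}\mathbf{R})\mathbf{r}_h)_j = \tfrac12 w_j - \tfrac14 w_{j-1} - \tfrac14 w_{j+1} = -\tfrac14 D_j$, whence $|((\mathbf{I}-\mathbf{P}\mathbf{R})\mathbf{r}_h)_j| \le \tfrac14 \delta^2 \max|w''|$. For odd $j$ in the interior the five-point stencil rearranges as $((\mathbf{I}-\mathbf{P}\mathbf{R})\mathbf{r}_h)_j = -\tfrac18 D_{j-1} - \tfrac12 D_j - \tfrac18 D_{j+1}$; this identity is the crux, and I would verify it by comparing the coefficients of $w_{j-2},\dots,w_{j+2}$ on both sides. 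The triangle inequality then gives $\left(\tfrac18 + \tfrac12 + \tfrac18\right)\delta^2 \max|w''| = \tfrac34 \delta^2 \max|w''|$, which is exactly where the constant $3/4$ originates.

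I would then dispatch the boundary indices $j \in \{1, 2, N-2, N-1\}$, where the coarse stencils truncate. The even endpoints $j=2,\,N-2$ are unaffected and still equal $-\tfrac14 D_j$. For the odd endpoints $j=1$ and $j=N-1$ the truncation simply drops one outer $D$-term; using $w_0 = 0$ (respectively $w_N = 0$) one checks $((\mathbf{I}-\mathbf{P}\mathbf{R})\mathbf{r}_h)_1 = -\tfrac12 D_1 - \tfrac18 D_2$ and, symmetrically, $((\mathbf{I}-\mathbf{P}\mathbf{R})\mathbf{r}_h)_{N-1} = -\tfrac18 D_{N-2} - \tfrac12 D_{N-1}$, giving the even smaller bound $\tfrac58 \delta^2 \max|w''| \le \tfrac34 \delta^2 \max|w''|$. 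Note that all the points appearing in these $D_i$ stay inside $[0,1]$, so the Taylor bound remains legitimate. Hence every component satisfies $|((\mathbf{I}-\mathbf{P}\mathbf{R})\mathbf{r}_h)_j| \le \tfrac{3}{4N^2}\max_{y\in[0,1]}|w''(y)|$.

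Finally, taking the maximum over $j$ yields $\Vert(\mathbf{I}-\mathbf{P}\mathbf{R})\mathbf{r}_h\Vert_{\infty} \le \tfrac{3}{4N^2}\max_{y\in[0,1]}|w''(y)|$ for the \emph{fixed} $w$. Since the left-hand side is independent of $w$, this holds simultaneously for every $w \in \mathcal{F}^{N-1}_{\mathbf{r}_h}$, so I may pass to the infimum on the right, which is precisely the asserted estimate. I expect the main obstacle to be combinatorial rather than analytic: identifying the decomposition of the odd-index residual into three \emph{unit-step} second differences with weights $\tfrac18, \tfrac12, \tfrac18$ (instead of the naive mix of a step-$\delta$ and a step-$2\delta$ difference, which would route through a weaker constant), and confirming that the truncated boundary stencils do not spoil the uniform $3/4$ constant.
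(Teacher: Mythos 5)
Your proof is correct and takes essentially the same route as the paper: both expand $(\mathbf{I}-\mathbf{P}\mathbf{R})\mathbf{r}_h$ entry-wise using the stencils of Proposition~\ref{prop:pdePRvDef} and bound the residual by Taylor's theorem with Lagrange remainder, with the constant $3/4$ emerging from the odd-index five-point stencil and the even-index bound ($1/4$) being slack. Your reorganization into unit-step second differences $D_i$ and your explicit handling of the truncated stencils at $j=1$ and $j=N-1$ (which the paper passes over silently) are only differences of bookkeeping; incidentally, the paper's direct expansion about $y_j$ — essentially the ``naive'' route your final paragraph warns against — also produces the constant $3/4$, since the step-$2\delta$ remainders enter with weight $1/8$ each, so that worry is unfounded.
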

\begin{proof}
Using Proposition \ref{prop:pdePRvDef} and Taylor's Theorem, in the case that $j$ is even, we obtain
\begin{eqnarray*}
\frac{1}{4} ( (\mathbf{r}_{h})_{j-1}+2 (\mathbf{r}_{h})_{j}+ (\mathbf{r}_{h})_{j+1})  
&=& 
\frac{1}{4} \left( w\left(y_{j-1}\right)+2w\left(y_{j} \right)+w\left(y_{j+1}\right) \right),
\\
 &=& w\left(y_{j}\right)+ \frac{w''(y_{c1})}{8} \frac{1}{N^2} + \frac{w''(y_{c2})}{8} \frac{1}{N^2},
 \\
 &=& (\mathbf{r}_{h})_{j} + \frac{w''(y_{c1})+w''(y_{c2})} {8} \frac{1}{N^2} ,
\end{eqnarray*}
where $w(\cdot) \in \mathcal{F}^{N-1}_{\mathbf{r}_h}$, $y_{j-1} \leq y_{c1} \leq y_{j}$, and $y_j \leq y_{c2} \leq y_{j+1}$. Similarly, in the case that $j$ is odd, we have 
\begin{multline}
\frac{1}{8} ( (\mathbf{r}_{h})_{j-2}+2(\mathbf{r}_{h})_{j-1}+2(\mathbf{r}_{h})_{j}+2(\mathbf{r}_{h})_{j+1} + (\mathbf{r}_{h})_{j+2}) 
\\
= (\mathbf{r}_{h})_{j} + \frac{4w''(y_{c3})+2 w''(y_{c4})+ 2w''(y_{c5})+4w''(y_{c6})} {16} \frac{1}{N^2},
\end{multline}
where $y_{j-2} \leq y_{c3} \leq y_j$, $y_{j-1} \leq y_{c4} \leq y_{j}$, $y_j \leq y_{c5} \leq y_{j+1}$, and $y_j \leq y_{c6} \leq y_{j+2}$. Therefore,
\[
\Vert (\mathbf{I} - \mathbf{P} \mathbf{R} )\mathbf{r}_{h} \Vert_{\infty} \leq \max_{y\in [0,1]}\vert w''(y) \vert \frac{3}{4N^2} \quad \text{for} \quad \forall w(\cdot) \in \mathcal{F}^{N-1}_{\mathbf{r}_h}.
\]
So we obtain the desired result.
\end{proof}
Lemma \ref{lm: PDEerrBD1} provides upper bound of $\Vert (\mathbf{I} - \mathbf{P} \mathbf{R} )\mathbf{r}_{h} \Vert_{\infty}$, for any $\mathbf{r}_{h} \in \mathbf{R}^{N-1}$. This result can be used to derive the upper bound of $\Vert (\mathbf{I} - \mathbf{P} \mathbf{R} )(\mathbf{x}_{h,k}-\mathbf{x}_{h,\star }) \Vert$, where $\mathbf{r}_{h} = \mathbf{x}_{h,k}-\mathbf{x}_{h,\star }$. 
As we can see, if $\vert w''(y) \vert = \mathcal{O} (1)$, where $w \in \mathcal{F}^{N-1}_{\mathbf{r}_h}$, then $\Vert (\mathbf{I} - \mathbf{P} \mathbf{R} )\mathbf{r}_{h} \Vert_{\infty} = \mathcal{O} (N^{-2})$. This can be explained by the fact that when the mesh size is fine enough (i.e. large $N$), linear interpolation and restriction provide very good estimations of the fine model.

In the following lemma, we provide an upper bound of $\vert w'' \vert$ in terms of the original vector $\mathbf{r}_h$. The idea is to specify the interpolation method in which we construct $w$, and we will use cubic spline in particular. Cubic spline is one of the standard interpolation methods, and the output interpolated function $w$ satisfies the setting in Definition \ref{def:NeMOPDEInterFunClass} and Lemma \ref{lm: PDEerrBD1}.

\begin{lemma}\label{lm: PDEerrBD2}
Suppose $\mathbf{P}$ and $\mathbf{R}$ are defined in (\ref{eq: PDEP}) and (\ref{eq: PDER}), respectively. For any vector $\mathbf{r}_h \in \mathbb{R}^{N-1}$, we obtain
\[
\Vert (\mathbf{I} - \mathbf{P} \mathbf{R} )\mathbf{r}_{h} \Vert_{\infty}  \leq  \frac{9}{4N^2}\Vert \mathbf{A} \mathbf{r}_h \Vert_{\infty},
\]
where
\[
\mathbf{A} = N^2 
\begin{pmatrix}
  2 & -1 &  & & \\
  -1 & 2 & -1 & &  \\
    & -1 & \ddots & \ddots &  \\
   &  & \ddots & 2& -1  \\
   &  &  & -1 & 2 
\end{pmatrix}.
\]
\end{lemma}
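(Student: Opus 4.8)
The plan is to invoke Lemma~\ref{lm: PDEerrBD1}, which already gives $\Vert(\mathbf{I}-\mathbf{P}\mathbf{R})\mathbf{r}_h\Vert_\infty \le \frac{3}{4N^2}\min_{w\in\mathcal{F}^{N-1}_{\mathbf{r}_h}}\max_{y\in[0,1]}|w''(y)|$. Since this is a minimum over admissible interpolants, it suffices to exhibit one $w \in \mathcal{F}^{N-1}_{\mathbf{r}_h}$ whose second derivative is controlled by $\Vert\mathbf{A}\mathbf{r}_h\Vert_\infty$; the extra factor of $3$ relative to the target bound is exactly what the spline estimate will supply. I would take $w$ to be the natural cubic spline interpolating the values $(\mathbf{r}_h)_i$ at the nodes $y_i = i/N$, with the endpoint conditions $w(0)=w(1)=0$ inherited from $(\mathbf{r}_h)_0=(\mathbf{r}_h)_N=0$. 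This $w$ is twice continuously differentiable and hence lies in $\mathcal{F}^{N-1}_{\mathbf{r}_h}$.

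The key step is the classical moment formulation of the cubic spline. Writing $M_i = w''(y_i)$, using the uniform spacing $h=1/N$ and the natural boundary conditions $M_0=M_N=0$, continuity of $w'$ across the interior knots produces the tridiagonal system
\[
\tfrac{1}{6}M_{i-1}+\tfrac{2}{3}M_i+\tfrac{1}{6}M_{i+1}
= N^2\big((\mathbf{r}_h)_{i-1}-2(\mathbf{r}_h)_i+(\mathbf{r}_h)_{i+1}\big),\qquad i=1,\dots,N-1.
\]
The right-hand side equals $-(\mathbf{A}\mathbf{r}_h)_i$, so letting $\mathbf{B}$ be the $(N-1)\times(N-1)$ tridiagonal matrix with diagonal entries $2/3$ and off-diagonal entries $1/6$, we have $\mathbf{B}\mathbf{M}=-\mathbf{A}\mathbf{r}_h$ and therefore $\Vert\mathbf{M}\Vert_\infty \le \Vert\mathbf{B}^{-1}\Vert_\infty\,\Vert\mathbf{A}\mathbf{r}_h\Vert_\infty$.

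The crux is bounding $\Vert\mathbf{B}^{-1}\Vert_\infty$, and here I would use strict row diagonal dominance. Each diagonal entry equals $2/3$, while the off-diagonal absolute row sum is $2\cdot\tfrac16=\tfrac13$ for interior rows and only $\tfrac16$ for the two boundary rows; hence $\gamma \triangleq \min_i\big(|b_{ii}|-\sum_{j\ne i}|b_{ij}|\big)=\tfrac13$. Evaluating $\mathbf{B}\mathbf{x}$ in the coordinate where $|x_i|=\Vert\mathbf{x}\Vert_\infty$ yields $\Vert\mathbf{B}\mathbf{x}\Vert_\infty \ge \gamma\Vert\mathbf{x}\Vert_\infty$, i.e. $\Vert\mathbf{B}^{-1}\Vert_\infty \le 1/\gamma = 3$, so $\Vert\mathbf{M}\Vert_\infty \le 3\Vert\mathbf{A}\mathbf{r}_h\Vert_\infty$.

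Finally, because $w$ is a cubic spline its second derivative is continuous and piecewise linear, so $\max_{y\in[0,1]}|w''(y)|$ is attained at a knot and equals $\max_i|M_i|=\Vert\mathbf{M}\Vert_\infty$. Substituting $\Vert\mathbf{M}\Vert_\infty \le 3\Vert\mathbf{A}\mathbf{r}_h\Vert_\infty$ into Lemma~\ref{lm: PDEerrBD1} gives $\Vert(\mathbf{I}-\mathbf{P}\mathbf{R})\mathbf{r}_h\Vert_\infty \le \frac{3}{4N^2}\cdot 3\Vert\mathbf{A}\mathbf{r}_h\Vert_\infty = \frac{9}{4N^2}\Vert\mathbf{A}\mathbf{r}_h\Vert_\infty$, as claimed. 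Everything except the constant $\Vert\mathbf{B}^{-1}\Vert_\infty\le 3$ is routine bookkeeping once the spline moment system is in place, so that diagonal-dominance estimate is the one place where care is needed to land the stated constant $9/4$.
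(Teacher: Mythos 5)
Your proof is correct and follows essentially the same route as the paper: both interpolate $\mathbf{r}_h$ by the natural cubic spline, invoke the uniform-mesh moment equations $\tfrac{1}{6}M_{i-1}+\tfrac{2}{3}M_i+\tfrac{1}{6}M_{i+1} = -(\mathbf{A}\mathbf{r}_h)_i$, bound $\max_i \vert M_i\vert \leq 3\Vert \mathbf{A}\mathbf{r}_h\Vert_\infty$, and then conclude via Lemma~\ref{lm: PDEerrBD1}. Your diagonal-dominance estimate $\Vert \mathbf{B}^{-1}\Vert_\infty \leq 3$ is simply the matrix-form packaging of the paper's componentwise argument, which evaluates the same moment equation at the knot where $\vert w_i''\vert$ is maximal and absorbs the two neighboring terms.
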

\begin{proof}
We follow the notation in Definition \ref{def:NeMOPDEInterFunClass}. For $w \in \mathcal{F}^{N-1}_{\mathbf{r}_h}$ that is constructed via cubic spline, in the interval $\left(y_i , y_{i+1} \right)$, we have
\[
w(y) = A w_i + B w_{i+1} + C w_{i}'' + D w_{i+1}'' ,
\]
where 
\begin{eqnarray*}
A &=& \frac{y_{i+1}-y}{y_{i+1}-y_{i}},
\\
B &=& \frac{y-y_{i}}{y_{i+1} - y_{i} },
\\
C &=& \frac{1}{6} (A^3 - A) (y_{i+1}-y_{i})^2,
\\
D &=& \frac{1}{6} (B^3 - B) (y_{i+1}-y_{i})^2.
\end{eqnarray*}
It is known from \cite{Press1996} that
\begin{equation}\label{eq:CubSplineD}
\frac{\text{d}^2 w}{\text{d}y^2} = A w_i'' + B w_{i+1}'',
\end{equation}
and
\begin{equation}\label{eq:CubSplineRel}
\frac{y_i - y_{i-1}}{6} w_{i-1}'' + \frac{y_{i+1} - y_{i-1}}{3} w_{i}'' + \frac{y_{i+1} - y_{i}}{6} w_{i+1}'' = \frac{w_{i+1} - w_i}{y_{i+1} - y_{i}} - \frac{w_{i} - w_{i-1}}{y_{i} - y_{i-1}},
\end{equation}
and for $i = 1,2,\dots , N-1$. Using the above equation (\ref{eq:CubSplineD}), at the interval $\left(y_i,y_{i+1}\right)$, we obtain
\begin{eqnarray*}
\Bigg\vert \frac{\text{d}^2 w}{\text{d}y^2} \Bigg\vert = \big\vert A w_i'' + B w_{i+1}'' \big\vert &=& \Bigg\vert \frac{y_{i+1}-y}{y_{i+1}-y_{i}} w_i'' + \frac{y-y_{i}}{y_{i+1} - y_{i} } w_{i+1}'' \Bigg\vert , 
\\
&\leq & \Bigg\vert \frac{y_{i+1}-y}{y_{i+1}-y_{i}}\Bigg\vert \vert w_i'' \vert + \Bigg\vert \frac{y-y_{i}}{y_{i+1} - y_{i} }\Bigg\vert \vert w_{i+1}'' \vert , 
\\
&\leq & \max \{ \vert w_{i}''\vert , \vert w_{i+1}''\vert \} .
\end{eqnarray*}
Suppose $j \in \arg \max_i \{ \vert w_{i}''\vert \}_i$, then from (\ref{eq:CubSplineRel}) and the fact that $y_{j+1} - y_{j} = 1/N$,
\begin{eqnarray*}
  \frac{y_{j+1} - y_{j-1}}{3} w_{j}''  &=& \frac{w_{j+1} - w_j}{y_{j+1} - y_{j}} - \frac{w_{j} - w_{j-1}}{y_{j} - y_{j-1}} - \frac{y_j - y_{j-1}}{6} w_{j-1}'' - \frac{y_{j+1} - y_{j}}{6} w_{j+1}'' ,
  \\
\frac{2}{3N} w_{j}''  &=& N( w_{j+1} - w_j) - N(w_{j} - w_{j-1}) - \frac{1}{6N} w_{j-1}'' - \frac{1}{6N} w_{j+1}'' ,
  \\
2 w_{j}''  &=& 3 N^2 ( w_{j+1} - 2 w_j  + w_{j-1}) - \frac{1}{2} w_{j-1}'' - \frac{1}{2} w_{j+1}'' .
\end{eqnarray*}
Thus,
\begin{eqnarray*}
 \vert 2 w_{j}'' \vert & \leq & 3 N^2 \vert  w_{j+1} - 2 w_j  + w_{j-1} \vert  +   \frac{1}{2} \vert w_{j-1}''\vert +   \frac{1}{2} \vert  w_{j+1}'' \vert ,
 \\
 2\vert  w_{j}'' \vert & \leq & 3 N^2 \vert  w_{j+1} - 2 w_j  + w_{j-1} \vert  +   \frac{1}{2} \vert w_{j}''\vert +   \frac{1}{2} \vert  w_{j}'' \vert ,
  \\
 \vert  w_{j}'' \vert & \leq & 3 N^2 \vert  w_{j+1} - 2 w_j  + w_{j-1} \vert  .
\end{eqnarray*}
Therefore,
\[
\vert  w_{i}'' \vert  \leq \max_{i} 3 N^2 \vert  w_{i+1} - 2 w_i  + w_{i-1} \vert ,
\]
and so,
\[
\Vert (\mathbf{I} - \mathbf{P} \mathbf{R} )\mathbf{r}_{h} \Vert_{\infty} \leq \max_{y\in [0,1]}\vert w''(y) \vert \frac{3}{4N^2} \leq  \max_{i} \frac{9\vert  w_{i+1} - 2 w_i  + w_{i-1} \vert}{4} = \frac{9}{4 N^2}\Vert \mathbf{A} \mathbf{r}_h \Vert_{\infty},
\]
as required.
\end{proof}
Lemma \ref{lm: PDEerrBD2} provides the discrete version of the result presented in Lemma \ref{lm: PDEerrBD1}. The matrix $\mathbf{A}$ is the discretized Laplacian operator, which is equivalent to twice differentiation using finite difference with a uniform mesh.

\subsection{Convergence}
With all the results, we revisit the composite convergence rate with the following Corollary.

\begin{corollary}\label{col:PDEconv}
Suppose $\mathbf{P}$ and $\mathbf{R}$ are defined in (\ref{eq: PDEP}) and (\ref{eq: PDER}), respectively. If the coarse correction step $\hat{\mathbf{d}}_{h,k}$ in (\ref{eq:NeMOStpDef}) is taken with $\alpha_{h,k}=1$, then
\begin{eqnarray*}
\Vert \mathbf{x}_{h,k+1} - \mathbf{x}_{h,\star} \Vert 
& \leq & \sqrt{\frac{L_h}{\mu_h}}  \min_{w \in \mathcal{F}^{N-1}_{\mathbf{x}_{h,k} - \mathbf{x}_{h,\star}}} \max_{y\in [0,1]}\vert w''(y) \vert \frac{3}{4N^{3/2}}  +  \frac{M_h \omega^2 \xi^2}{2\mu_h} \Vert \mathbf{x}_{h,k} - \mathbf{x}_{h,\star} \Vert^2 ,
\\
& \leq &  \frac{9}{4N^{3/2}} \sqrt{\frac{L_h }{\mu_h}}   \Vert \mathbf{A} (\mathbf{x}_{h,k} - \mathbf{x}_{h,\star}) \Vert +  \frac{M_h \omega^2 \xi^2}{2\mu_h} \Vert \mathbf{x}_{h,k} - \mathbf{x}_{h,\star} \Vert^2 ,
\end{eqnarray*}
where $\mathbf{A}$ is defined in Lemma \ref{lm: PDEerrBD2}. Note that $M_h$, $L_h$, and $\mu_h$ are defined in Assumption~\ref{assm:NeMOLipAssm}, $\omega = \max \{ \Vert \mathbf{P} \Vert , \Vert \mathbf{R} \Vert \}$, and $\xi = \Vert \mathbf{P}^+ \Vert$.
\end{corollary}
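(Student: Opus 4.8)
The plan is to specialize the composite bound of Theorem~\ref{thm: composConv} to the concrete prolongation/restriction pair of (\ref{eq: PDEP})--(\ref{eq: PDER}). Theorem~\ref{thm: composConv} already supplies the desired two-term structure, with the quadratic term $\frac{M_h \omega^2 \xi^2}{2\mu_h}\Vert \mathbf{x}_{h,k}-\mathbf{x}_{h,\star}\Vert^2$ appearing verbatim, so the task reduces to controlling the linear term, namely the product $\Vert \mathbf{I} - \mathbf{P}[\nabla_H^2 f_{h,k}]^{-1}\mathbf{R}\nabla^2 f_{h,k}\Vert \cdot \Vert(\mathbf{I}-\mathbf{P}\mathbf{R})(\mathbf{x}_{h,k}-\mathbf{x}_{h,\star})\Vert$. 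First I would set $\mathbf{r}_h = \mathbf{x}_{h,k}-\mathbf{x}_{h,\star}$, so that the second factor becomes an instance of the interpolation-error quantity $\Vert(\mathbf{I}-\mathbf{P}\mathbf{R})\mathbf{r}_h\Vert$ analysed earlier in this section.

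The first factor is immediately bounded by $\sqrt{L_h/\mu_h}$ via Lemma~\ref{lm: RankBD}, since the hierarchy has $n = N/2-1 < N-1$ and hence $\mathrm{rank}(\mathbf{P})\neq N-1$. For the second factor I would invoke Lemma~\ref{lm: PDEerrBD1}, which bounds the error in the $\ell_\infty$ norm by $\min_{w}\max_{y}\vert w''(y)\vert\,\tfrac{3}{4N^2}$, and, for the second displayed inequality, Lemma~\ref{lm: PDEerrBD2}, which further bounds this by $\tfrac{9}{4N^2}\Vert\mathbf{A}\mathbf{r}_h\Vert_\infty$. Substituting these two estimates, together with the factor $\sqrt{L_h/\mu_h}$, into the linear term of Theorem~\ref{thm: composConv} then yields both inequalities once the norms are reconciled.

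The main obstacle — really the only nonroutine bookkeeping step — is the norm mismatch: Theorem~\ref{thm: composConv} is stated in the Euclidean norm, whereas Lemmas~\ref{lm: PDEerrBD1} and~\ref{lm: PDEerrBD2} are stated in $\ell_\infty$. Since $(\mathbf{I}-\mathbf{P}\mathbf{R})\mathbf{r}_h\in\mathbb{R}^{N-1}$, I would apply $\Vert\mathbf{v}\Vert_2\leq\sqrt{N-1}\,\Vert\mathbf{v}\Vert_\infty\leq\sqrt{N}\,\Vert\mathbf{v}\Vert_\infty$. This is precisely where the exponent improves: the factor $\sqrt{N}$ combines with the $1/N^2$ from the interpolation bound to produce $\sqrt{N}/N^2 = 1/N^{3/2}$, which explains the $N^{3/2}$ in the statement. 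For the second inequality I would additionally use $\Vert\mathbf{A}\mathbf{r}_h\Vert_\infty\leq\Vert\mathbf{A}\mathbf{r}_h\Vert_2$ to re-express the bound through $\Vert\mathbf{A}(\mathbf{x}_{h,k}-\mathbf{x}_{h,\star})\Vert$. Collecting the constants $\tfrac{3}{4}$ and $\tfrac{9}{4}$ with the factor $\sqrt{L_h/\mu_h}$ then gives the two claimed composite rates, keeping the minimization over $w\in\mathcal{F}^{N-1}_{\mathbf{x}_{h,k}-\mathbf{x}_{h,\star}}$ intact in the first line.
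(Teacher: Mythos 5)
Your proposal is correct and follows essentially the same route as the paper's proof: apply Theorem~\ref{thm: composConv}, bound the first factor by $\sqrt{L_h/\mu_h}$ via Lemma~\ref{lm: RankBD}, and bound $\Vert(\mathbf{I}-\mathbf{PR})(\mathbf{x}_{h,k}-\mathbf{x}_{h,\star})\Vert$ via Lemmas~\ref{lm: PDEerrBD1} and~\ref{lm: PDEerrBD2}. In fact, your treatment of the $\ell_2$/$\ell_\infty$ mismatch (the $\sqrt{N}$ factor turning $1/N^2$ into $1/N^{3/2}$, and $\Vert\mathbf{A}\mathbf{r}_h\Vert_\infty\leq\Vert\mathbf{A}\mathbf{r}_h\Vert_2$) makes explicit a step the paper's own proof leaves implicit.
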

\begin{proof}
\begin{eqnarray*}
\Vert \mathbf{x}_{h,k+1} - \mathbf{x}_{h,\star} \Vert 
& \leq & \Vert \mathbf{I} - \mathbf{P} [\nabla_H^2 f_{h,k}]^{-1} \mathbf{R} \nabla^2 f_{h,k} \Vert  \Vert (\mathbf{I} - \mathbf{PR})(\mathbf{x}_{h,k} - \mathbf{x}_{h,\star}) \Vert 
\\
&& \: +  \frac{M_h \omega^2 \xi^2}{2\mu_h} \Vert \mathbf{x}_{h,k} - \mathbf{x}_{h,\star} \Vert^2 ,
\\
& \leq & \sqrt{\frac{L_h}{\mu_h}}  \min_{w \in \mathcal{F}^{N-1}_{\mathbf{x}_{h,k} - \mathbf{x}_{h,\star}}} \max_{y\in [0,1]}\vert w''(y) \vert \frac{3}{4N^{3/2}}  +  \frac{M_h \omega^2 \xi^2}{2\mu_h} \Vert \mathbf{x}_{h,k} - \mathbf{x}_{h,\star} \Vert^2 ,
\\
& \leq &  \frac{9}{4N^{3/2}} \sqrt{\frac{L_h}{\mu_h}}   \Vert \mathbf{A} (\mathbf{x}_{h,k} - \mathbf{x}_{h,\star}) \Vert +  \frac{M_h \omega^2 \xi^2}{2\mu_h} \Vert \mathbf{x}_{h,k} - \mathbf{x}_{h,\star} \Vert^2 ,
\end{eqnarray*}
as required.
\end{proof}
Corollary \ref{col:PDEconv} provides the convergence of using Newton-type multilevel model for PDE-based problems that we considered. This result shows the complementary of fine correction step and coarse correction step. Suppose the fine correction step can effectively reduce $\Vert \mathbf{A} (\mathbf{x}_{h,k} - \mathbf{x}_{h,\star}) \Vert$, then the coarse correction step could yield major reduction based on the result shown in Corollary \ref{col:PDEconv}.

\section{Numerical Experiments}\label{sec:NeMONumExp}
In this section, we verify our convergence results with a numerical example. This example satisfies the assumptions of Section \ref{sec:PDE}, and it is an one-dimensional Poisson's equation, which is a standard example in numerical analysis and multigrid algorithms. In the second part of this section, we will compare NeMO with other algorithms. 

\subsection{Poisson's Equation}

We consider an one-dimensional Poisson's equation
\[
- \frac{\text{d}^2}{\text{d}q^2} u = w(q) \quad \text{in} \ \ [0,1], \quad u(0)=u(1)=0,
\]
where $w(q)$ is chosen as
\[
w(q) = \sin(4\pi q) + 8\sin(32\pi q)  + 16\sin(64\pi q).
\]
We discretize the above problem and denote $\mathbf{x},\mathbf{b}\in \mathbb{R}^{N-1}$ , where $(\mathbf{x})_i = u(i/N)$ and $(\mathbf{b})_i = w(i/N)$, for $i=1,2,\dots, N-1$. By using finite difference, we approximate the above equation with
\begin{equation}\label{eq:PossionEq}
\min_{\mathbf{x}\in \mathbb{R}^{N-1}}\frac{1}{2}\mathbf{x}^T\mathbf{A}\mathbf{x} - \mathbf{b}^T\mathbf{x},
\end{equation}
where $\mathbf{A}$ is defined as in Lemma \ref{lm: PDEerrBD2}, which is a discretized Laplacian operator.

\begin{figure*}[t]
\begin{center}
\begin{minipage}[t]{0.48\linewidth}
    \includegraphics[width=\linewidth]{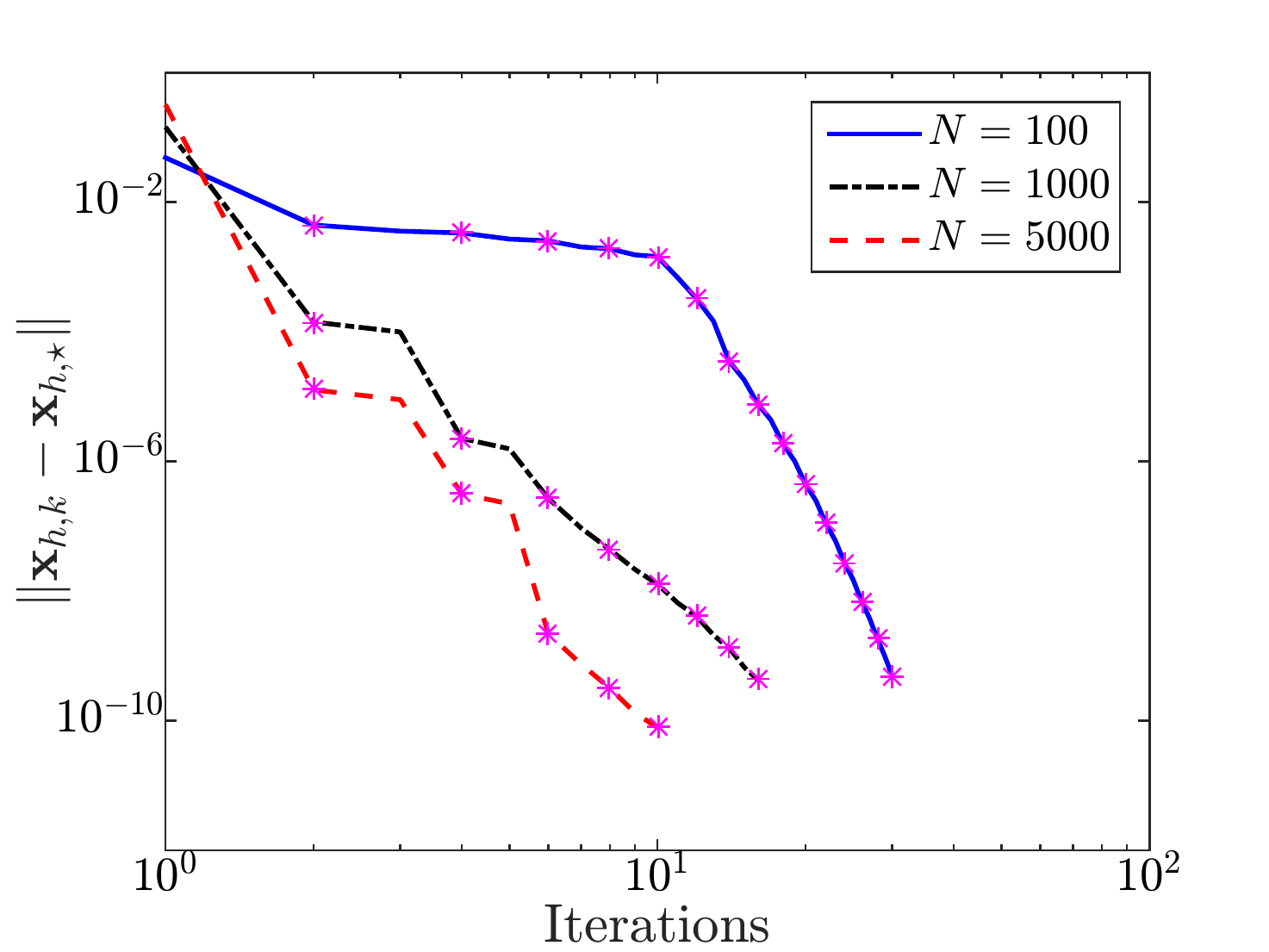}
    \caption{Convergence of solving Poisson's equation with different $N$'s}
    \label{fig:PossionResult}
\end{minipage}%
\ \ \ \ 
\begin{minipage}[t]{0.48\linewidth}
	\includegraphics[width=\linewidth]{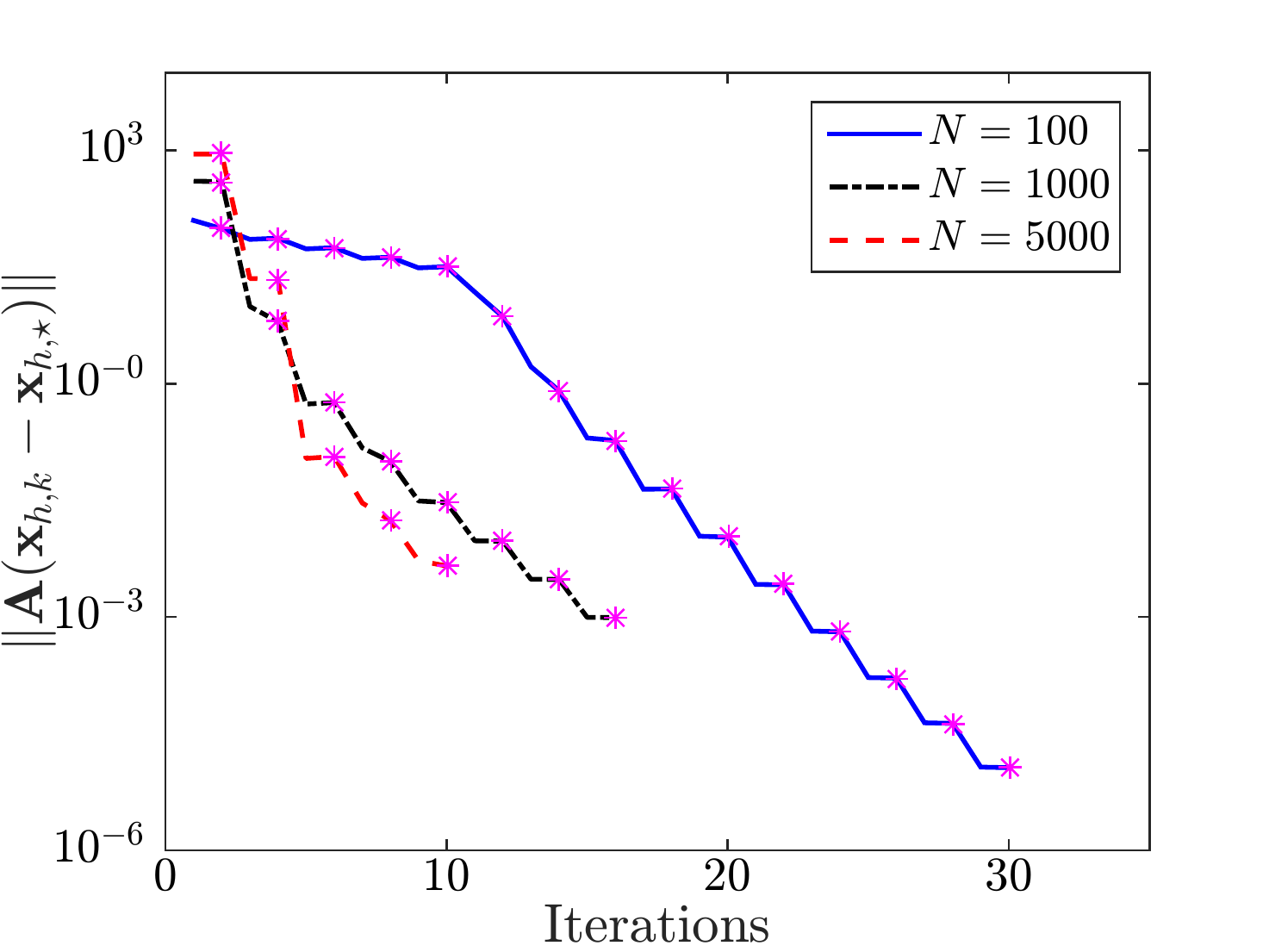}
    \caption{The smoothing effect with different $N$'s}
    \label{fig:PossionDiff}
\end{minipage} 
\end{center}
\end{figure*}

Figure \ref{fig:PossionResult} shows the convergence results of solving (\ref{eq:PossionEq}) with different $N$'s. In this example we use the prolongation and restriction operators that are defined in (\ref{eq: PDEP}) and (\ref{eq: PDER}). Steepest descent is used to compute the fine correction step. The pink stars in Figure \ref{fig:PossionResult} and Figure \ref{fig:PossionDiff} indicate where coarse correction steps were used.

As expected from Corollary \ref{col:PDEconv}, the performance of convergence is inversely proportional to the discretization level $N$. More interestingly, one can see the complementary of fine correction step and coarse correction step. From Figure \ref{fig:PossionResult}, fine correction steps are often deployed after coarse correction steps. Each pair of fine and coarse correction steps provides significant improvement in convergence. Figure \ref{fig:PossionDiff} shows the smoothing effect of the fine correction step by looking at the quantity $\Vert \mathbf{A} (\mathbf{x}_{h,k} - \mathbf{x}_{h,\star}) \Vert$, where $\mathbf{A}$ is the discretized Laplacian operator, as defined in Lemma \ref{lm: PDEerrBD2}. As opposed to coarse correction steps, fine correction steps are effective in reducing $\Vert \mathbf{A} (\mathbf{x}_{h,k} - \mathbf{x}_{h,\star}) \Vert$. Once the error is smoothed, coarse correction steps provide large reduction in error, as shown in Figure \ref{fig:PossionResult}.

\subsection{Numerical Performance}

Algorithm~\ref{alg: MLGM} offers great flexibility with respect to the choice of its various components, such as the interpolation operator, fine-level smoother, linear solver, etc. In our numerical experiments, we have used two variants:
\begin{enumerate}
	\item[A1.1.] The fine-level smoother is the damped Newton method with Armijo line search. Linear systems
	$$
	\mathbf{H}_h \mathbf{d} = -\mathbf{g}_h
	$$
	arising in the Newton method are solved by a direct solver, namely, by the Matlab's backslash operator.
	\item[A1.2.] The smoother is the Newton method as in A1.1.  However, assuming that we have an interpolation and a restriction operators $\mathbf{P}$ and $\mathbf{R}$ at our disposal, we can use it to solve the fine-level linear equation
	$$
	\mathbf{H}_h \mathbf{d} = -\mathbf{g}_h
	$$
	by a two-grid method with $\mathbf{H}_H = \mathbf{R} \mathbf{H}_h \mathbf{P}$.
\end{enumerate}
We will compare the above two methods with the MG/OPT algorithm \cite{Nash2000}
\begin{enumerate}	
	\item[A1.3.] As in A1.1 but with the coarse level matrix $\mathbf{H}_H$ being the exact Hessian of the coarse level problem.
	\item[A1.4.] As in A1.2 but with the coarse level matrix $\mathbf{H}_H$ being the exact Hessian of the coarse level problem.
\end{enumerate}
Further details common to all the above variants:
\begin{itemize}
\item Linear systems on the coarse level were solved by a direct method (Matlab backslahs operator).
\item Initial point: Set as in Matlab as
\begin{verbatim} 
rng('default');
x = 5.*randn(n,1);
\end{verbatim}
We did not use the obvious choice $\mathbf{x}=0$, as this is, for most examples, too close to the region of quadratic convergence of the Newton method. We wanted to see the effect of NeMO when most of the iterations lie outside this region.
\item Stopping tolerance: Assuming that we minimize a function $f$; Algorithm~\ref{alg: MLGM} has been stopped when $\|\nabla f(\mathbf{x})\|\leq \varepsilon_{\text{stop}}$, with $\varepsilon_{\text{stop}}=10^{-9}$ unless specified otherwise.
\item The control parameters $\kappa$ and $\varepsilon$ have been chosen as $\kappa= \displaystyle\frac{n_H}{n_h}$ and $\varepsilon=0.1$, unless specified otherwise. (Here $n_h$ and $n_H$ is the number of variables on the fine and the coarse level, respectively.)
\item The parameter of the standard Armijo line search is set to 0.01.
\item In Algorithms A1.2 and A1.4, the  fine level multigrid method was stopped as soon as the scaled residuum of the Newton equation was below 0.1.
\end{itemize}

In all examples, matrix $\mathbf{A}$ is the discretized two-dimensional Laplace operator. The discretization was performed on a square domain using the finite difference method and we considered homogeneous Dirichlet boundary conditions. When defining the levels, we started with an initial $3\times 3$ grid as ``level 1". Each next level used regular refinement doubling the number of discretization points in each coordinate. Hence ``level 2" corresponds to $5 \times 5$ and the corresponding matrix $\mathbf{A}\in\mathbb{R}^{9\times 9}$ (after elimination of the boundary points)
$$
\mathbf{A} = \frac{1}{3} 
\begin{pmatrix}
 8&     -1&            0&           -1&          -1&            0&            0&            0&            0   \\    
-1&      8&           -1&           -1&          -1&           -1&            0&            0&            0  \\     
 0&     -1&            8&            0&          -1&           -1&            0&            0&            0  \\     
-1&     -1&            0&            8&          -1&            0&           -1&           -1&            0  \\     
-1&     -1&           -1&           -1&           8&           -1&           -1&           -1&           -1  \\   
 0&     -1&           -1&            0&          -1&            8&            0&           -1&           -1  \\   
 0&      0&            0&           -1&          -1&            0&            8&           -1&            0  \\     
 0&      0&            0&           -1&          -1&           -1&           -1&            8&           -1  \\   
 0&      0&            0&            0&          -1&           -1&            0&           -1&            8
\end{pmatrix}.
$$
We use up to ten discterization levels with ``level 10" corresponding to a problem with $1050625 \times 1050625$ matrix $\mathbf{A}$, i.e., a problem with 1050625 variables.

The interpolation operators $\mathbf{P}=P_k^{k+1}$ from level $k$ to level $k+1$ are based on the nine-point interpolation scheme defined by the stencil $\begin{pmatrix}
\frac{1}{4}&\frac{1}{2}&\frac{1}{4}\\
\frac{1}{2}&1&\frac{1}{2}\\
\frac{1}{4}&\frac{1}{2}&\frac{1}{4}\\
\end{pmatrix}$. We use the full weighting restriction operators defined by $\mathbf{R}= \frac{1}{4}(P_k^{k+1})^T$; see, e.g., \cite{Hackbusch1985}. The interpolation operator between levels $k$ and $k+p$ is defined
by $\mathbf{P} = P_{k+p-1}^{k+p}\; P_{k+p-2}^{k+p-1}\cdots P_k^{k+1}$ and analogously for the restriction operator $\mathbf{R}$.

\paragraph*{Example 1}
Minimize the following function
$$
f(x) := \frac{1}{2}\mathbf{x}^T \mathbf{A} \mathbf{x} + h \lambda \sum_{i=1}^n (\mathbf{x}^2 e^\mathbf{x}-e^{\mathbf{x}}) - \mathbf{b}^T\mathbf{x}
$$
where $\lambda=10$ and $h=1/(n+1)$. Here
$\mathbf{A} $ is a matrix resulting from discretization of the Laplacian operator on a regular finite element mesh, using bilinear quadrilateral elements and $\mathbf{b}$ is the discretization of function
$$
{b}(x_1,x_2)=\left(9 \pi^2+e^{(x_1^2-x_1^3)\sin(3\pi x_2)}(x_1^2-x_1^3)+6x_1-2\right) \sin(3\pi x_1)
$$
on the same mesh. 

Table~\ref{tab:1} gives results obtained by NeMO variant A1.1 with a direct solver on all levels. In this (and the next) table the columns show the coarse level used (with 0 being the finest level); number of variables in the coarse level; total number of NeMO iterations; number of NeMO iteration on the fine level (i.e., number of times the fine level Newton equation has been solved); total CPU time on a MacBook Pro with 2.3 GHz Intel Core i5 processor running Matlab 2017b.

The first row of Table~\ref{tab:1} shows results with coarse level zero, i.e., for the standard damped Newton metod on the fine level. Hence we compare this line with the remaining NeMO results. Indeed, once we consider coarse level 2 and more, NeMO is substantially faster than the Newton method, in terms of the CPU time. For instance, for coarse level 2, we only have to visit the fine level in 5 iteration, the ``rest of the work" is performed on the coarse level. Figure~\ref{fig:101} shows the iteration history of NeMO with coarse level 2: most of the initial iteration are performed on the coarse level, while the final iterations are done on the fine level
\begin{table}[htbp]
	\centering
	\caption{Example 1, Algorithm A1.1 with direct solver on both levels}
	\begin{tabular}{crrrr}
		\toprule	
		coarse level &	coarse variables & total iter	&fine iter&	CPU time\\ 
		\midrule
		0&	1\,046\,529 & 20& 20&   88.1\\
		1&	     261\,121 & 23&	  6&	42.4\\
		2&	     65\,025 & 25&	 5&	   35.3\\
		3&	      16\,129 & 30&   6&	37.9\\
		4&	       3\,969 & 36&	  8&	47.1\\
		5&	           961 & 47&   11&	 60.6\\
		\bottomrule
	\end{tabular}%
	\label{tab:1}%
\end{table}%
\begin{figure}[h]
	\begin{center}
		\resizebox{0.7\hsize}{!}
		{\includegraphics{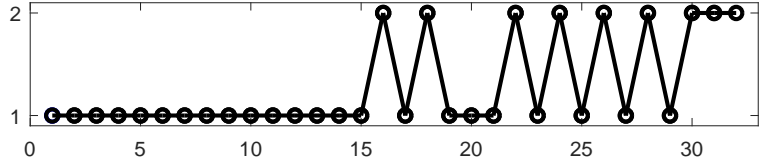}}
	\end{center}
	\caption{Example~1, levels visited in every iteration; 1 stands for the coarse level and 2 for the fine level.}\label{fig:101}
\end{figure}

Table~\ref{tab:1} confirms the advantage of NeMO as compared to the Newton method. 
However, assuming that we have an interpolation and a restriction operators $\mathbf{P}$ and $\mathbf{R}$ at our disposal, we can use it to solve the fine-level linear equation
$
\mathbf{H}_h \mathbf{d} = -\mathbf{g}_h
$
by a two-grid method with $\mathbf{H}_H = \mathbf{R} \mathbf{H}_h \mathbf{P}$. Table~\ref{tab:2} shows the result with this version of NeMO. In addition to the columns presented in Table~\ref{tab:1}, we also give the total number of two-grid iterations on the fine level (column ``mg iter"). As before, we first solve the problem using only the fine level (coarse level 0); the method then becomes equivalent to the standard nonlinear (Newton) multigrid method. The first three rows of Table~\ref{tab:2} show results with this method using coarse levels 1, 2 and 3 for the two-grid method. As we can see, this method is much more efficient then the Newton method with a direct solver (first row of Table~\ref{tab:1} ). In the next rows of Table~\ref{tab:2} we combine NeMO with the two-grid method for the linear equations on the fine level. As we can see, the advantage of NeMO to the nonlinear multigrid method is not so obvious in this case. NeMO with coarse level 3 is still the fastest method but only just.
\begin{table}[htbp]
	\centering
	\caption{Example 1, Algorithm A1.2 with two-grid solver on fine level}
	\begin{tabular}{ccrrrrr}
		\toprule	
		coarse level &	coarse level & coarse level & total iter & fine iter & mg iter&	CPU time\\ 
		for NeMO    & for mg          & variables\\   
		\midrule
		0&	1 & 1\,046\,529 & 20& 20& 20&  37.5\\
		0&	2 & 1\,046\,529 & 20& 20& 22&  27.9\\
		0&	3 & 1\,046\,529 & 21& 21&  33& 29.8\\
		1&	2 &     261\,121 & 25&	  8& 26&	43.6\\
		2&	2 &      65\,025 & 31&	 10& 19&   31.8\\
		3&	2 &       16\,129 & 30&   9& 11&	26.2\\
		4&	2 &        3\,969 & 33&	 10& 12&	28.3\\
		5&	2 &            961 & 48&   12& 14&	 36.8\\
		\bottomrule
	\end{tabular}%
	\label{tab:2}%
\end{table}%

Finally, to have a complete overview, we give in Table~\ref{tab:3} results for the MG/OPT method \cite{Nash2000,Wen2009} when the coarse level matrix for the linear system is computed as an exact Hessian of the objective function discretized on the coarse level. Again, the fine level linear system is either solved by a direct method (first part of Table~\ref{tab:3}) or by the two-grid method as above. One can see that using the two-grid solver would be slightly beneficial when the number of coarse level variables is small.
\begin{table}[htbp]
	\centering
	\caption{Example 1, two-level MG/OPT with direct solver on the fine level (Algorithm A1.3) and with a two-grid solver on the fine level (Algorithm A1.4)}
	\begin{tabular}{ccrrrrr}
		\toprule	
		fine level &	coarse level & coarse level & total iter & fine iter & mg iter&	CPU time\\ 
		solver     & for MG/OPT         & variables\\   
		\midrule
		direct&  1 &     261\,121 & 14&	  5& --&	30.3\\
		         &  2 &      65\,025 & 21&	 6&-- &   32.7\\
		         &	3 &       16\,129 & 28&   7&-- &	37.8\\
		         &	4 &        3\,969 & 33&	 9&-- &	47.1\\
		         &  5 &	            961 & 28&   12&-- &	 51.6\\
		\midrule
		mg&	  1 &     261\,121 & 19&	  6& 23&	37.1\\
		     &	2 &      65\,025 & 28&	 8& 25&   35.8\\
		     &	3 &       16\,129 & 36&  10& 12&29.1\\
		     &	4 &        3\,969 & 41&	 10& 12&	31.0\\
		     &  5 &             961 & 29&  12& 14&	 24.1\\
		\bottomrule
	\end{tabular}%
	\label{tab:3}%
\end{table}%

\section{Comments and Perspectives}\label{sec:NeMODiscuss}
In this paper we analyzed a Newton-type multilevel optimization (NeMO) algorithm. We argued that the appropriate convergence rate for this multilevel algorithm should be composite i.e. it should have both a linear and quadratic component. We then studied the linear component of the composite rate, and we showed how the hierarchical structure of the model could be used to improve it. To our knowledge, this is the first time a connection between the hierarchal structure of the model and the rate of convergence of a multilevel optimization algorithm has been made. The results presented in this paper can be generalized and refined. The local composite rate of convergence when solving PDE-based optimization can be extended to cases beyond one-dimensional problems or uniform discretization. These extensions would require more careful analysis, but the general approach presented in Section \ref{sec:PDE} can be applied. 

\section*{Acknowledgements}
The second author was supported by FMJH/PGMO Project No2017-0088 ``Multi-level Methods in Constrained Optimization''. The third author was funded by Engineering \& Physical Sciences Research Council grant number EP/M028240/1. The third author was also funded in part by JPMorgan Chase \& Co. Any views or opinions expressed herein are solely those of the authors listed, and may differ from the views and opinions expressed by JPMorgan Chase \& Co. or its affiliates. This material is not a product of the Research Department of J.P. Morgan Securities LLC. This material does not constitute a solicitation or offer in any jurisdiction.

\bibliography{bibliography}
\bibliographystyle{plain}

\end{document}